\newtheorem{theorem}{Theorem}[section]
\newtheorem{lemma}[theorem]{Lemma}
\newtheorem{proposition}[theorem]{Proposition}
\newtheorem{corollary}[theorem]{Corollary}
\theoremstyle{definition}
\newtheorem{remark}[theorem]{Remark}
\newtheorem{definition}[theorem]{Definition}
\renewcommand{\leq}{\leqslant}
\renewcommand{\le}{\leq}
\renewcommand{\geq}{\geqslant}
\renewcommand{\ge}{\geq}
\newcommand{\eps}{\epsilon}
\newcommand\opr[1]{\operatorname{#1}}
\def\C{\mathbf{C}}
\def\HS{{\opr{HS}}}
\def\op{{\opr{op}}}
\def\tr{{\opr{tr}}}
\def\im{{\opr{im}}}
\def\per{\opr{per}}
\def\Aut{\opr{Aut}}
\def\Sym{\opr{Sym}}
\def\GL{\opr{GL}}
\def\SL{\opr{SL}}
\def\PSL{\opr{PSL}}
\def\PSU{\opr{PSU}}
\def\PSp{\opr{PSp}}
\def\Sz{\opr{Sz}}
\def\P{\mathbf{P}}
\def\E{\mathbf{E}}
\newcommand\floor[1]{\left\lfloor{#1}\right\rfloor}
\newcommand\pfrac[2]{\left(\frac{#1}{#2}\right)}
\newcommand\qqquad{\qquad\qquad}
\def\frakS{\mathfrak{S}}
\def\ab{\textup{ab}}
\def\calP{\mathcal{P}}
\def\calQ{\mathcal{Q}}
\def\calR{\mathcal{R}}
\def\supp{\opr{supp}}
\def\hatS{\widehat{1_S}}
\newcommand\hatotimes{\mathbin{\widehat{\otimes}}}
\def\coll{\opr{coll}}
\def\cm{\opr{cm}} 
\def\cx{\opr{cx}} 
\def\rank{\cork}
\newcommand{\cork}[0]{\operatorname{rank}} 
\def\psystem{\mathfrak{P}} 
\begin{document}

\title{An asymptotic for the Hall--Paige conjecture}

\author{Sean Eberhard}
\address{Sean Eberhard, Centre for Mathematical Sciences, Wilberforce Road, Cambridge CB3~0WB, UK}
\email{eberhard@maths.cam.ac.uk}

\author{Freddie Manners}
\address{Freddie Manners, UCSD Department of Mathematics, 9500 Gilman Drive \#0112, La Jolla CA 92093, USA}
\email{fmanners@ucsd.edu}

\author{Rudi Mrazovi\'c}
\address{Rudi Mrazovi\'c, University of Zagreb, Faculty of Science, Department of Mathematics, Zagreb, Croatia}
\email{Rudi.Mrazovic@math.hr}

\thanks{SE has received funding from the European Research Council (ERC) under the European Union’s Horizon 2020 research and innovation programme (grant agreement No. 803711); RM is supported in part by the Croatian Science Foundation under the project UIP-2017-05-4129 (MUNHANAP)} 

\begin{abstract}
  Hall and Paige conjectured in 1955 that a finite group $G$ has a complete mapping if and only if its Sylow $2$-subgroups are trivial or noncyclic. This conjecture was proved in 2009 by Wilcox, Evans, and Bray using the classification of finite simple groups and extensive computer algebra. Using a completely different approach motivated by the circle method from analytic number theory, we prove that the number of complete mappings of any group $G$ of order $n$ satisfying the Hall--Paige condition is $(e^{-1/2} + o(1)) \, |G^\ab| \, n!^2/n^n$.
\end{abstract}

\maketitle

\section{Introduction}

A \emph{complete mapping} of a group $G$ is a bijection $\phi: G \to G$ such that $x \mapsto x \phi(x)$ is also bijective. Complete mappings arise naturally in the theory of Latin squares: the Latin square based on the multiplication table of $G$ has an orthogonal mate\footnote{Two Latin squares of the same dimension are called orthogonal mates if all the pairs of entries in corresponding cells are different.} if and only if $G$ has a complete mapping.

For example, if $n = |G|$ is odd then $x \mapsto x^2$ is bijective, so $\phi(x) = x$ is a complete mapping. On the other hand not all groups have complete mappings. Indeed, note that if $G$ is abelian and $\phi: G \to G$ is complete then
\[
  \prod_{x \in G} x = \prod_{x \in G} (x \phi(x)) = \left( \prod_{x \in G} x\right)^2,
\]
so $\prod_{x \in G} x$ must be trivial. Thus for example cyclic groups of even order do not have complete mappings. This observation goes back in some form to Euler~\cite{euler} and his ``thirty-six officers problem'' (1782), and has been rediscovered several times (see~\cite[Section~3.1.1]{evans-book}).

More generally, if $G$ has a complete mapping then $\prod_{x \in G} x$ must be trivial in the abelianization $G^\ab$, i.e., we must have $\prod_{x \in G} x \in G'$, where $G' = [G,G]$ is the commutator subgroup of $G$. We call this condition \emph{the Hall--Paige condition}. Hall and Paige~\cite{hp} proved that this is equivalent to the condition that the Sylow $2$-subgroups of $G$ are trivial or noncyclic, and they conjectured that this condition is also sufficient for the existence of a complete mapping. This conjecture was finally proved in 2009 in breakthrough work of Wilcox, Evans, and Bray~\cite{wilcox, evans}.

\begin{theorem}[The Hall--Paige conjecture, proved in 2009 by Wilcox, Evans, and Bray]%
  \label{hp-conjecture}
  A finite group $G$ has a complete mapping if and only if $G$ satisfies the Hall--Paige condition.
\end{theorem}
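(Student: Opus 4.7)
The plan is to deduce Theorem~\ref{hp-conjecture} from the quantitative asymptotic announced in the abstract: since $(e^{-1/2} + o(1))\,|G^\ab|\,n!^2/n^n$ is strictly positive for every finite group $G$ of sufficiently large order $n$ satisfying the Hall--Paige condition, the asymptotic yields a complete mapping in all but finitely many such groups, and the finitely many remaining small cases can be dispatched by direct computer verification (or by quoting the small-group cases of Hall and Paige's original paper). The ``only if'' direction of Theorem~\ref{hp-conjecture} is already essentially proved in the discussion above, upon replacing the abelian calculation $\prod_x x = (\prod_x x)^2$ by its analog in the abelianization $G^\ab$.

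To establish the asymptotic, I would implement a non-abelian version of the circle method. The number $N(G)$ of complete mappings equals the number of pairs $(\phi,\psi) \in \Sym(G)^2$ with $\psi(x) = x\phi(x)$ for every $x$; equivalently, in a more symmetric form, one counts triples $(\alpha,\beta,\gamma) \in \Sym(G)^3$ satisfying $\alpha(x)\beta(x)\gamma(x) = 1$ identically, a quantity that differs from $N(G)$ only by an explicit combinatorial factor. I would then expand the pointwise constraint using Fourier inversion on $G$:
\[
  1_{g=1} \;=\; \frac{1}{|G|} \sum_\rho d_\rho \, \tr \rho(g),
\]
with $\rho$ ranging over irreducible representations of $G$. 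Interchanging the $n$-fold product with the sum over $\rho$ then rewrites $N(G)$ as a sum indexed by tuples $(\rho_x)_{x \in G}$ of irreducibles, each tuple contributing a product of matrix coefficients integrated against the uniform measure on $\Sym(G)$ (or, after a moment calculation, a trace of certain permutation-equivariant operators).

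The main term (``major arcs'') should come from those tuples in which almost every $\rho_x$ is trivial, together with the total contribution of one-dimensional characters --- these factor through $G^\ab$ and, after careful summation, account for the factor $|G^\ab|$. The constant $e^{-1/2}$ I would expect to emerge as a Poisson-type local correction encoding the small-cycle statistics of a random pair of bijections, in the spirit of the classical Poisson limit theorem for fixed points of random permutations. The main obstacle, playing the role of the minor arcs, is to control the contribution of tuples involving higher-dimensional irreducibles. This will require sharp character bounds of Larsen--Shalev--Tiep type on $|\chi(g)|/\chi(1)$, applied uniformly across \emph{all} finite groups rather than just quasisimple ones; I anticipate this requires a reduction to the quasisimple case together with delicate combinatorial bookkeeping to show that the ``generic'' non-abelian tuple contributes negligibly, and that the rare bad tuples can be handled by a separate structural argument.
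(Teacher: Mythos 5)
Your high-level setup is correct and matches the paper: one counts triples $(\pi_1,\pi_2,\pi_3)\in\Sym(G)^3$ with $\pi_1\pi_2\pi_3\equiv 1$, expands via Fourier analysis on $G^n$ (equivalently, via a tensor sum over tuples of irreducibles), and extracts the main term $|G^\ab|$ from the low-support and one-dimensional tuples. Your intuition that $e^{-1/2}$ is a Poisson-type local correction is also on target. However, there are two genuine gaps.

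First, the claim that the ``finitely many remaining small cases can be dispatched by direct computer verification'' does not work. The quantitative threshold coming out of this type of argument is on the order of $10^5$--$10^{10}$ in $|G|$ (Section~\ref{sec:quant-proof}), and as the paper notes explicitly, enumerating complete mappings of all groups up to that size is not feasible. The actual route is to invoke Wilcox's reduction of a minimal counterexample to a simple group, then observe that a nonabelian simple group has no low-dimensional representations, which both tightens the minor-arc bounds and shrinks the remaining list of simple groups to a handful that can be handled by known constructions. So the small-case cleanup is not a brute-force computation but a structural reduction; you need this extra ingredient.

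Second, and more seriously, your proposed minor-arc strategy --- sharp $|\chi(g)|/\chi(1)$ bounds of Larsen--Shalev--Tiep type, together with a reduction to the quasisimple case --- is both a different and a much heavier route than what actually works, and it is unclear it would succeed. The quantity being bounded is not a single character value at a group element but rather norms of the operators $\hatS(\rho)$ for tensor-product representations $\rho$ of $G^n$, summed over an enormous family of $\rho$. The paper controls these by three elementary devices that use essentially no character theory: a Parseval-type ``sparseval'' bound on the total $L^2$ mass (Lemma~\ref{sparseval}), an operator-norm bound for $m$-sparse $\rho$ proved by a direct recursion using $\sum_x\rho(x)=0$ for nontrivial $\rho$ (Lemma~\ref{op-norm-bound}), and a Hilbert--Schmidt bound proved by an integral/AM--GM argument that, as the paper points out, does not see the group structure at all (Theorem~\ref{nonabelian-GEC}). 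None of this requires CFSG, quasisimple reductions, or deep character estimates --- indeed the paper advertises the elementarity of the asymptotic as one of its main features. Routing through Larsen--Shalev--Tiep would reintroduce the very classification-based machinery the method is designed to avoid, and it is not evident how pointwise character ratio bounds would aggregate into the needed operator-norm and Schatten-norm estimates over $\widehat{G^n}$.
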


Let us very roughly describe the proof of Theorem~\ref{hp-conjecture}.\footnote{For a readable account of the full proof, see~\cite[Part~II]{evans-book}.} Hall and Paige proved that if $G$ has a normal subgroup $N$ such that both $N$ and $G/N$ have complete mappings then $G$ has a complete mapping, and they used this and related arguments to prove the conjecture when $G$ is solvable. On the other hand, the Feit--Thompson Theorem implies that every nonsolvable group satisfies the Hall--Paige condition. Thus a minimal counterexample to the Hall--Paige conjecture would have to be either simple or a group $G$ having a normal subgroup $N$ such that exactly one of $N$ and $G/N$ fails the Hall--Paige condition. Wilcox showed that we may further assume $|N| = 2$ or $|G/N| = 2$, and showed in these circumstances how to construct a complete mapping of $G$ from one of $N$ or $G/N$, thus reducing the Hall--Paige conjecture to the case of simple groups. Complete mappings had already been constructed for several families of simple groups, including the alternating groups by Hall and Paige themselves. Wilcox gave a unified construction for groups of Lie type, leaving only the Tits group and the $26$ sporadic groups. Evans~\cite{evans} combined Wilcox's method with extensive computer algebra to check all remaining cases save only the fourth Janko group $J_4$, and this case was checked by Bray.\footnote{Bray's work remained unpublished for some time, but finally appeared in~\cite{bray}.}

In this paper we give a completely different proof of Theorem~\ref{hp-conjecture}, for sufficiently large groups, based on the foundational principle of probabilistic combinatorics: to show that a thing exists, it suffices to count them. Using nonabelian Fourier analysis and motivated by the circle method from analytic number theory, we prove the following asymptotic for the number of complete mappings of a group satisfying the Hall--Paige condition.

\begin{theorem}%
  \label{main-theorem-hp-version}
  Let $G$ be a finite group of order $n$. If $G$ satisfies the Hall--Paige condition then the number of complete mappings of $G$ is
  \[
    (e^{-1/2} + o(1)) \, |G^\ab| \, n!^2 / n^n.
  \]
\end{theorem}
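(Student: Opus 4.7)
The plan is to count complete mappings by nonabelian Fourier analysis on $G$, a direct analogue of the Hardy--Littlewood circle method. Write
\[
N(G) = \sum_{\phi,\psi \in \Sym(G)} \prod_{x \in G} \delta_1(x\phi(x)\psi(x)^{-1}),
\]
with $\delta_1$ the indicator of the identity, and expand each delta via $\delta_1(g) = \frac{1}{n}\sum_{\rho \in \widehat G} d_\rho \chi_\rho(g)$. This re-expresses $N(G)$ as a sum over ``frequency tuples'' $(\rho_x)_{x\in G}\in\widehat G^n$ of contributions
\[
\frac{1}{n^n}\prod_x d_{\rho_x}\cdot\sum_{\phi,\psi\in\Sym(G)} \prod_x \chi_{\rho_x}(x\phi(x)\psi(x)^{-1}).
\]
Following the circle method template, I would split this sum into \emph{major arcs} (tuples in which every $\rho_x$ is a $1$-dimensional character of $G^\ab$) and \emph{minor arcs} (tuples with at least one $\rho_x$ of dimension $\ge 2$), and show that the major arcs contribute $(e^{-1/2}+o(1))|G^\ab|n!^2/n^n$ while the minor arcs are negligible.

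On the major arcs, the inner sum factors multiplicatively across $x$ and equals $\prod_x \chi_x(x)\cdot|\per A_\chi|^2$, where $(A_\chi)_{x,y}=\chi_x(y)$. Expanding the permanents as double sums over permutations and interchanging the order of summation, Fourier orthogonality on $G^\ab$ collapses the entire major-arc contribution to
\[
\frac{|G^\ab|^n}{n^n}\#\bigl\{(\sigma,\tau)\in\Sym(G)^2 : x\sigma(x)\tau(x)^{-1}\in G' \text{ for all } x\bigr\}.
\]
The Hall--Paige hypothesis is what forces $\prod_x x\sigma(x)\tau(x)^{-1}\in G'$ automatically in $G^\ab$, saving one degree of freedom and producing the leading factor $|G^\ab|$ in the asymptotic. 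The residual factor $e^{-1/2}$ I would extract from a local central limit / Poisson-type calculation on $G^\ab$: the induced map $x\mapsto x\sigma(x)G'$ for uniform $\sigma$ is approximately uniform on the codimension-one subvariety of $(G^\ab)^n$ cut out by the Hall--Paige-enforced product constraint, and the probability that its fibres are all of the exact correct size $|G'|$ tends to $e^{-1/2}$, in parallel with the classical fact that a uniform random permutation of $[n]$ has no $2$-cycles with probability tending to $e^{-1/2}$.

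The main obstacle will be the minor arcs. For a tuple $(\rho_x)$ involving a representation of dimension $\ge 2$, the summand no longer factorises over $x$ and must be bounded via a nonabelian analogue of the permanent with matrix-valued entries. The plan here is to combine (i) Schur orthogonality applied to the $\phi$- and $\psi$-sums row-by-row, (ii) character-ratio bounds of the form $|\chi_\rho(g)|\le(1-\eta)d_\rho$ for $g$ outside a small near-central set, and (iii) Witten-zeta-type estimates $\sum_{\rho\in\widehat G,\,\rho\ne 1}d_\rho^{-s}=O(1)$ for some small $s>0$, so that each high-dimensional tuple is strongly suppressed. Summing over all minor-arc tuples the total contribution should be $o(|G^\ab|n!^2/n^n)$, at which point the theorem follows. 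Obtaining character-ratio bounds that are uniform across all finite groups of order $n$ is likely to require input from the classification of finite simple groups (for instance, Gluck-type bounds for groups of Lie type), and this is where the proof is expected to make its deepest use of existing structural theory.
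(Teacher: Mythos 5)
Your high-level framing (nonabelian Fourier analysis, circle-method decomposition, Poisson heuristic for the $e^{-1/2}$) matches the paper's philosophy, but the crucial technical move --- how you draw the line between major and minor arcs --- is wrong, and the error is not fixable by sharper minor-arc bounds. You put a tuple $(\rho_x)_{x\in G}$ on the major arcs exactly when every $\rho_x$ is one-dimensional. Test this against a perfect group $G$ (say a large simple group), where $G^\ab=1$ and hence the only one-dimensional representation is trivial. Then your major-arc contribution is just the all-trivial tuple, which by your own formula evaluates to $\frac{|G^\ab|^n}{n^n}\cdot\#\{(\sigma,\tau)\}=\frac{1}{n^n}\cdot n!^2 = n!^2/n^n$. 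But the theorem asserts $\cm(G)=(e^{-1/2}+o(1))\,n!^2/n^n$, so your minor arcs must account for $(e^{-1/2}-1+o(1))\,n!^2/n^n\approx -0.393\,n!^2/n^n$, a fixed negative fraction of the main term rather than $o(\cdot)$. In other words, the minor arcs cannot be discarded; the $e^{-1/2}$ is coming \emph{from} them. Concretely, after expanding $\delta_1(g_ig_j^{-1})=\frac1n\sum_\rho d_\rho\chi_\rho(g_ig_j^{-1})$, the $m=2$-sparse tuples with a dual pair of \emph{high-dimensional} factors make a contribution comparable to the trivial tuple, and it is these (together with their $m$-sparse cousins for all fixed small $m$) that sum to $\mathfrak S(1)\to e^{-1/2}$. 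For the same reason, your claimed local-CLT calculation on the abelian major arcs cannot output $e^{-1/2}$ uniformly: for perfect $G$ the fiber-balance condition on $x\mapsto x\sigma(x)G'$ is vacuous (all fibers have size $n$), so that ``probability'' is $1$, not $e^{-1/2}$.

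The paper's decomposition is made precisely to avoid this. A \emph{major arc} there is a tuple $\rho=\rho_1\otimes\cdots\otimes\rho_n$ in which $n-O(1)$ factors equal a \emph{common} one-dimensional $\rho_0$, with the remaining $O(1)$ factors completely arbitrary (in particular, high-dimensional). By a twist $\rho\mapsto\rho\hatotimes\psi^n$ one reduces to $\rho_0=1$, i.e.\ to $m$-sparse representations with $m=O(1)$, picking up the factor $|G^\ab|$; the $e^{-1/2}$ then emerges from aggregating all $m$-sparse contributions through a Möbius/partition-system bookkeeping, not from one-dimensionality. Minor arcs are everything else, and the paper's bounds for them are operator-norm and Hilbert--Schmidt estimates for $\hatS(\rho)$ that are elementary and completely group-independent (sparseval is deduced from the abelian case, the HS bound is a permanent inequality). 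This is also why the paper can avoid the classification of finite simple groups, whereas your minor-arc plan --- uniform character-ratio and Witten-zeta bounds --- would necessarily invoke CFSG (Gluck-type estimates and classification of low-degree representations), conflicting with the paper's explicit goal of an elementary proof. To salvage your approach you would need to redefine the arcs along the paper's lines, at which point the permanent identity you derived is no longer the relevant object.
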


In particular, we have a new proof that the Hall--Paige conjecture holds for every sufficiently large finite group. The proof is elementary in that we do not require the classification of finite simple groups, but nonconstructive: the only algorithm our method suggests for constructing a complete mapping is to try bijections at random until one works.

We also prove various extensions of this main result, which we now list.

\subsection{Quantitative bounds}%
\label{subsec:quant}

Our methods in proving Theorem~\ref{main-theorem-hp-version} are effective: one can compute an explicit (and not unreasonable) value for how large $G$ must be so that the proof shows that the number of complete mappings is positive.  However, this value is large enough that checking all the remaining smaller cases of the Hall--Paige conjecture directly is not feasible. We can, however, leverage some of the arguments from the sketch above to give a different proof of the full conjecture, one that avoids extensive case-checking.

A careful quantitative analysis allows us to dispatch all but a few finite simple groups. We defer the details to Section~\ref{sec:quant-proof} (see Theorem~\ref{thm:d13}), but the following proposition is representative.

\begin{proposition}%
  \label{prop:baby-quantitative}
  Let $G$ be a finite group of order $|G| > 10^5$ such that all nontrivial complex representations of $G$ have degree at least $21$. Then $G$ has a complete mapping. The same holds if $|G| > 3 \times 10^5$ and all representations have degree at least $13$. \footnote{These conditions imply that $G$ is perfect, so it automatically satisfies the Hall--Paige condition.} 
\end{proposition}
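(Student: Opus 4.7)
The strategy is to make quantitative the Fourier-analytic count of complete mappings underlying Theorem~\ref{main-theorem-hp-version}. Writing $N(G)$ for the number of complete mappings of $G$, there should be a nonabelian Fourier expansion
\[
  N(G) = M(G) + E(G),
\]
where $M(G)$ is the contribution of the trivial representation and $E(G) = \sum_{\rho \neq 1} E_\rho(G)$ is a sum of contributions from nontrivial irreducible representations $\rho \in \hat G$. Because the hypotheses force $G$ to be perfect (no nontrivial $1$-dimensional representations), we have $|G^\ab| = 1$ and the main term equals $(e^{-1/2} + o(1))\, n!^2/n^n$. The plan is to show that both ingredients are effective: an explicit lower bound on $M(G)$ and an explicit upper bound on $|E(G)|$, whose difference is positive under the stated numerics.

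First I would record an effective lower bound $M(G) \geq \tfrac{1}{2} e^{-1/2}\, n!^2/n^n$, valid already for $n$ of the orders of magnitude in question, by carefully going through the major-arc estimate of the proof of Theorem~\ref{main-theorem-hp-version} and bookkeeping the error in the $o(1)$ term. Next, for each nontrivial $\rho$, I would bound $|E_\rho(G)|$ in terms of $n$ and $d_\rho = \dim \rho$; combined with $\sum_{\rho \neq 1} d_\rho^2 = n - 1$ and $d_\rho \geq d$, this should give a bound of the shape
\[
  |E(G)| \leq \frac{n!^2}{n^n} \cdot C \cdot n \cdot d^{-\alpha}
\]
for explicit constants $C, \alpha > 0$. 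Under the hypotheses $|G| > 10^5,\ d \geq 21$ (resp.\ $|G| > 3\times 10^5,\ d \geq 13$), the right-hand side would then fall below $\tfrac{1}{2} e^{-1/2}\, n!^2/n^n$, giving $N(G) > 0$ and hence a complete mapping.

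The main obstacle is obtaining a sharp enough per-representation bound: the exponent $\alpha$ and constant $C$ must be large, resp.\ small, enough that the contribution is killed even when $d$ is only $13$ or $21$, which is very far from the ``$d$ a growing function of $n$'' regime that a qualitative proof of Theorem~\ref{main-theorem-hp-version} would naturally handle. This requires tracking numerical constants through the Fourier expansion, the permanent estimates used to count permutations constrained by representation data, and any Witten-zeta-type tail bound $\sum_{\rho \neq 1} d_\rho^{-s}$ that enters the minor-arc piece. Theorem~\ref{thm:d13}, to which the author defers, should package these estimates in a form uniform enough to yield both the $d \geq 21$ and $d \geq 13$ thresholds as instances, and I expect the bulk of Section~\ref{sec:quant-proof} to consist of this numerical bookkeeping rather than conceptually new ideas.
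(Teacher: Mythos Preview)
Your plan has the right circle-method spirit but misidentifies the underlying Fourier expansion, and the proposed error bound is of the wrong shape to close the argument.

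First, the Fourier decomposition is over irreducible representations of $G^n$, not of $G$.  These are tensor products $\rho = \rho_1 \otimes \cdots \otimes \rho_n$ with each $\rho_i \in \widehat{G}$, and there are exponentially many of them.  The relevant organizing parameter is the \emph{sparsity} $m = |\{i : \rho_i \neq 1\}|$.  In particular, the ``main term'' is not the contribution of the single trivial representation of $G^n$ (that would give $n!^2/n^n$, with no factor $e^{-1/2}$); rather, the $e^{-1/2}$ arises by summing over all $m$-sparse $\rho$ with $m$ small.  The quantitative major-arc lower bound the paper actually proves is $M_{25} > 0.22\,(n!/n^n)^3$ (Proposition~\ref{major-arcs}), coming from all $m$-sparse terms with $m \le 25$.

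Second, and more seriously, a bound of the shape $|E(G)| \le C\, n\, d^{-\alpha}\, n!^2/n^n$ cannot work: the number of minor-arc representations of $G^n$ is exponential in $n$, so a merely polynomial saving in $d$ is hopeless.  The point at which the hypothesis $d(G) \ge 13$ (or $21$) enters the paper's proof is this: for an $m$-sparse $\rho$, each nontrivial factor has dimension $\ge d$, so $\dim \rho \ge d^m$.  Combined with the sparseval bound (Lemma~\ref{sparseval}) on $\sum_{m\text{-sparse }\rho} \|\hatS(\rho)\|_\HS^2 \dim \rho$, this yields
\[
  \sum_{m\text{-sparse }\rho} \|\hatS(\rho)\|_\HS^3 \dim \rho \lesssim e^{f_d(m/n)\, n/2} (n!/n^n)^3,
  \qquad f_d(t) = 3s(t) - h(t) - t \log d,
\]
and the specific thresholds $d = 13$ and $d = 21$ are precisely the values for which $f_d$ is negative on $[0.06, 0.78]$ (resp.\ $[0.06, 1]$).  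This exponential-in-$n$ saving is what makes the numerics close; your Witten-zeta-type heuristic $\sum_{\rho \in \widehat{G}} d_\rho^{-s}$ plays no role.
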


Given Wilcox's reduction to simple groups, and his proof for simple groups of Lie type, this proposition reduces the possible minimal counterexamples to just the Mathieu groups $M_{11}$ and $M_{12}$ (of orders $7920$ and $95040$), which still require another method.\footnote{According to~\cite[Section~4.3]{evans-book}, the fact that $M_{11}$ and $M_{12}$ are not minimal counterexamples goes back to Aschbacher~\cite{aschbacher}; the fact that they are not counterexamples at all was first proved by Dalla Volta and Gavioli~\cite{dalla-volta-gavioli}.} The main value of our results here is therefore an alternative argument for the large sporadic groups.  We also substantially weaken the dependence on the classification of finite simple groups: we need only a classification of finite simple groups $G$ such that either $G \leq \GL_{12}(\C)$ or $|G| \leq 3 \times 10^5$. \footnote{This is still extremely nontrivial. For example, as late as 1972 it was not known whether there was a finite simple group of order 43200 (see Hall~\cite{hall-one-million}).} 

We finally note that statements such as Proposition~\ref{prop:baby-quantitative} do not represent the absolute limit of these methods for small groups.  Specifically, by a more careful choice of parameters, and replacing analytic bounds on various quantities by their actual computable values, the authors are fairly confident that, for example, the Mathieu group $M_{12}$ could also be handled by the same tools (while for instance $M_{11}$ seems just out of reach without new ideas).  However, we will not attempt to defend these rather involved computations in this paper.

\subsection{An asymptotic expansion}%
\label{subsec:asymptotic}

In Theorem~\ref{main-theorem-hp-version} we find the number of complete mappings up to a factor $1+o(1)$.
By elaborating the proof, we can prove the following finer asymptotic.

\def\inv{\opr{inv}(G)}
\begin{theorem}%
  \label{main-theorem-hp-inv-version}
  Let $G$ be a finite group of order $n$. If $G$ satisfies the Hall--Paige condition then the number of complete mappings of $G$ is
  \[
    e^{-1/2} \left(1 + (1/3 + \inv/4)\, n^{-1} + O(n^{-2})\right) \, |G^\ab| \, n!^2 / n^n,
  \]
  where $\inv = |\{x \in G: x^2 = 1\}| / n$ is the proportion of involutions in $G$.
\end{theorem}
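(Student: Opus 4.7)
The plan is to revisit and refine the Fourier-analytic argument behind Theorem~\ref{main-theorem-hp-version}. Following that strategy, the number $N$ of complete mappings should be expressed as
\[
N = \frac{1}{n^n}\sum_{\phi,\psi \in \Sym(G)} \prod_{x \in G} \sum_{\rho \in \hat G} d_\rho\, \chi_\rho\bigl(\psi(x)\phi(x)^{-1}x^{-1}\bigr),
\]
by encoding the identity $\psi(x)=x\phi(x)$ via the spectral decomposition of the Dirac delta at the identity. Opening the product indexes the sum by families $(\rho_x)_{x \in G}$ of irreducible representations, and the proof of Theorem~\ref{main-theorem-hp-version} isolates those contributions for which every $\rho_x$ is one-dimensional (which combine to produce the $|G^\ab|\cdot e^{-1/2}$ factor) while controlling the remainder by $o(1)$.

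To refine this to an $O(n^{-1})$ expansion, my first step would be to upgrade the derivation of $e^{-1/2}$ itself: this constant appears as the limit of a finite product of the form $\prod_{x \in G}\bigl(1 - c_x/n + O(n^{-2})\bigr)$ (with $c_x$ recording the contribution of the $1$-dimensional characters at $x$), and keeping the next term in the logarithm yields an explicit $O(n^{-1})$ correction. A symmetry analysis distinguishing involutions from non-involutions, since elements with $x=x^{-1}$ are fixed by the pairing $x \leftrightarrow x^{-1}$ that is ultimately responsible for the exponent $-1/2$, naturally introduces $\inv$ into the coefficient; the universal $1/3$ should come from the next-order Taylor coefficient in $\log(1-t)$, together with analogous combinatorial expansions of Stirling-type factors of $n!/n^n$, all of which are independent of $G$.

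My second step would be to revisit the contributions of families $(\rho_x)$ with at least one $\rho_x$ of dimension $\geq 2$ and show that they aggregate to $O(n^{-2})$. In the proof of Theorem~\ref{main-theorem-hp-version} these contributions only need to be $o(1)$, but here they must be controlled to one further order of accuracy, presumably via sharper moment estimates of the form $\sum_\rho d_\rho^k \leq |G|^{k/2}$ and a more precise treatment of character sums of the form $\sum_{\phi \in \Sym(G)} \prod_x \chi_\rho(\phi(x))$. The main obstacle will be the borderline configurations where only a bounded number of the $\rho_x$'s are nontrivial and higher-dimensional: one must either show that each such configuration contributes $O(n^{-2})$, or identify residual $O(n^{-1})$ contributions and prove that they cancel. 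Because the stated $O(n^{-1})$ coefficient contains no nonabelian invariants of $G$, I would expect the higher-dimensional contributions to cancel to order $n^{-1}$, and verifying these cancellations in sufficient generality is likely to be the most delicate part of the argument.
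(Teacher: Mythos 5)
Your starting point — expanding the delta function as $\frac{1}{n}\sum_\rho d_\rho\chi_\rho$ and opening the product — is equivalent to the paper's Fourier expression \eqref{fourier-expression}, so there is no issue there, and the rough plan (track the next order in the major arcs, show the remainder is $O(n^{-2})$) is the right shape. However, the mechanism you propose for extracting the $n^{-1}$ correction has a genuine gap, and in a couple of places the heuristic is actually pointing in the wrong direction.

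First, the $e^{-1/2}$ does not arise from a pairing $x\leftrightarrow x^{-1}$ of \emph{group elements}. It comes from the singular series, i.e.\ from collisions between pairs of indices $\{i,j\}\subset\{1,\dots,n\}$ (the M\"obius/inclusion--exclusion contribution of the single ``pairing'' partition $\{\{1,2\}\}$, with $\mu^3/|\Aut| = -1/2$). Consequently, ``keeping the next term in the logarithm'' of $\prod_x(1-c_x/n+\cdots)$ is too crude: it recovers the $1/(2n)$ refinement of the singular series but has no chance of producing the $\inv/4$ term, which is \emph{not} a correction to that product at all. In the paper it comes from a genuinely new combinatorial configuration: the ``Klein pairing'' partition system on four indices, $\calP_1=\{\{1,2\},\{3,4\}\}$, $\calP_2=\{\{1,3\},\{2,4\}\}$, $\calP_3=\{\{1,4\},\{2,3\}\}$, whose weight is computed via the identity $\P(h_1h_2\text{ is }\calP_3\text{-measurable})=\inv/n$ because the constraints $x_1y_1=x_2y_2$, $x_1y_2=x_2y_1$ force $(x_1x_2^{-1})^2=1$. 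Your proposal offers no route to discovering or evaluating this object. Similarly, the universal $1/3$ is not a Taylor coefficient of $\log(1-t)$: it is the aggregate $\tfrac12 + \tfrac43 - \tfrac32$ coming from the singular-series refinement and two further families of connected partition systems of complexity one, plus a Cauchy-integral extraction to the correct order. The machinery needed here is the exponential formula over \emph{connected} partition systems together with bounds controlling $\gamma_G(\psystem)$; you would have to rediscover essentially all of this to fill the gap.

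Second, your worry that the higher-dimensional representations might contribute at order $n^{-1}$ and need to cancel is misplaced: the minor-arc bounds from Theorem~\ref{main-theorem-hp-version} (Propositions~\ref{low-entropy-minor} and~\ref{prop:dense-minor-arcs}) are already super-polynomially small, so no cancellation argument is required. The hard part is entirely in the major arcs. Note also that $\inv$ \emph{is} a character-theoretic invariant of $G$ (via Frobenius--Schur indicators) and is certainly not ``abelian,'' so your structural expectation that the correction should contain no nonabelian data is also off.
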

The method allows us in principle to extract further terms in the asymptotic series more or less mechanically, though it is prohibitively tedious to do so.

We deduce the following corollary, confirming an observation of Wanless~\cite[Section~6.5]{wanless} (see also McKay--McLeod--Wanless~\cite[Section~3]{mckay--mcleod--wanless}).
\begin{corollary}%
  \label{C2k-corollary}
Among all groups of order $n = 2^k$ with $k$ sufficiently large, the number of complete mappings is uniquely maximized by the elementary abelian group $G = C_2^k$.
\end{corollary}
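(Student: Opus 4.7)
The plan is to apply Theorem~\ref{main-theorem-hp-inv-version} and compare $G = C_2^k$ against any other group $H$ of order $n = 2^k$. Groups failing the Hall--Paige condition (cyclic $2$-groups of order $\geq 2$) have no complete mappings and can be ignored, so I need only rule out noncyclic $H \neq C_2^k$. The comparison splits naturally into two cases depending on whether $H$ is abelian, and is essentially a one-paragraph consequence of the theorem in each case.

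If $H$ is non-abelian, the commutator subgroup $H'$ is a nontrivial $2$-group, so $|H^\ab| \leq n/2$. By Theorem~\ref{main-theorem-hp-inv-version} the number of complete mappings of $H$ is at most $(1 + O(n^{-1})) \cdot e^{-1/2} (n/2) \, n!^2/n^n$, while $C_2^k$ has at least $(1 + O(n^{-1})) \cdot e^{-1/2} n \, n!^2/n^n$ complete mappings. The factor-of-$2$ deficit in the leading term overwhelms the $O(n^{-1})$ correction for all sufficiently large $k$.

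If $H$ is abelian and distinct from $C_2^k$, write $H \cong C_{2^{a_1}} \times \cdots \times C_{2^{a_r}}$ with $\sum a_i = k$. Then $|H^\ab| = n$, matching $C_2^k$ in the leading term, so I must compare the $n^{-1}$ corrections. The $2$-torsion of $H$ is isomorphic to $C_2^r$, so $\opr{inv}(H) = 2^r/n$; this equals $1$ when $H = C_2^k$ (so $r = k$), and is at most $1/2$ otherwise (as then $r \leq k-1$). Consequently the coefficient $1/3 + \opr{inv}/4$ of $n^{-1}$ equals $7/12$ for $C_2^k$ but is at most $1/3 + 1/8 = 11/24$ for $H$, a gap of at least $1/8$. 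Multiplying through yields a surplus of at least $(1/8 + o(1)) \, e^{-1/2} n!^2 / n^n$ complete mappings in favour of $C_2^k$, which is strictly positive for $k$ large.

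The only obstacle is a minor technical one: the implied constant in the $O(n^{-2})$ error of Theorem~\ref{main-theorem-hp-inv-version} must be absolute (uniform in $G$) for this comparison to go through. This is the standard interpretation of the statement and is what the proof delivers, so no additional work is required. Combining the two cases forces $G = C_2^k$ to be the unique maximiser among all groups of order $2^k$ for all sufficiently large $k$.
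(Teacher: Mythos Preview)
Your proof is correct and follows essentially the same approach as the paper: the paper compresses your abelian/non-abelian split into the single dichotomy ``either $|G^\ab| \leq n/2$ or $\inv \leq 1/2$'' and then compares the asymptotics from Theorem~\ref{main-theorem-hp-inv-version} exactly as you do. Your explicit remark about the uniformity of the $O(n^{-2})$ constant is a valid (and tacitly used) point.
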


\subsection{Counting configurations of permutations}

In previous work~\cite{EMM, moreon} we proved the following theorem, proving conjectures of Wanless~\cite[Conjecture~6.9]{wanless} and Vardi~\cite{vardi}.

\begin{theorem}%
  \label{EMM-main-thm}
  Let $G$ be an abelian group of order $n$ and let $f:\{1, \dots, n\} \to G$ be a function such that
  \[
    \sum_{i=1}^n f(i) = \sum_{x \in G} x.
  \]
  Then the number of solutions to $\pi_1 + \pi_2 + \pi_3 = f$ with $\pi_1, \pi_2, \pi_3: \{1, \dots, n\} \to G$ bijections is
  \[
    (\frakS(f) + o(1)) \, n!^3 / n^{n-1}.
  \]
\end{theorem}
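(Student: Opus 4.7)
The natural strategy is the circle method adapted to the abelian group $G$. Using character orthogonality $\mathbf{1}[g=0] = \frac{1}{n}\sum_{\chi \in \hat G}\chi(g)$, the count can be written
\[
N = \frac{1}{n^n}\sum_{(\chi_1,\ldots,\chi_n) \in \hat G^n} \overline{\prod_{i=1}^n \chi_i(f(i))}\; T(\chi_1,\ldots,\chi_n)^3,
\]
where
\[
T(\chi_1,\ldots,\chi_n) := \sum_{\pi : [n]\to G \text{ bij.}} \prod_{i=1}^n \chi_i(\pi(i))
\]
is the permanent of the $n\times n$ character matrix with $(i,g)$-entry $\chi_i(g)$. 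The trivial character tuple contributes $T = n!$ and gives a baseline count of $n!^3/n^n$; the full main term, larger by a factor of $n$, must emerge after summing over a whole family of tuples near a ``balanced'' profile.

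I would split $\hat G^n$ into \emph{major arcs} -- tuples $(\chi_1,\ldots,\chi_n)$ in which each character $\chi \in \hat G$ appears with multiplicity close to the balanced value $n/|G|$ -- and \emph{minor arcs}. On the major arcs the character matrix is close (after rescaling) to unitary, so the permanent admits a clean asymptotic in terms of how the multiset $\{\chi_i\}$ perturbs the uniform profile. Summing the major-arc contribution over these small perturbations reproduces the singular series and yields $\frakS(f)\,n!^3/n^{n-1}$ as the main term, with $\frakS(f)$ emerging as a local density weighted by the $\prod_i \chi_i(f(i))$ factors.

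The hard part is the minor arc bound: one must show that when the multiplicity profile of $(\chi_1,\ldots,\chi_n)$ is far from balanced, $|T(\chi_1,\ldots,\chi_n)|$ decays fast enough that the total minor-arc contribution is $o(n!^3/n^{n-1})$. Concretely, one wants a bound like $|T(\chi_1,\ldots,\chi_n)| \leq n!\,\rho^n$ with $\rho<1$ depending quantitatively on an imbalance measure. The natural tools are the Egorychev--Falikman / van der Waerden theory of permanents (typically entered via a second-moment expansion of $|T|^2$ as a sum over pairs of permutations, which collapses by orthogonality to character sums over derangement-like configurations), together with entropy and concentration estimates bounding the number of tuples with a given profile. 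Matching these parameters so that the exponential gain on minor arcs dominates the combinatorial size of the minor-arc index set is the technical heart of \cite{EMM, moreon}.
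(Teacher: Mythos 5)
Your expansion of the count as a $\hat G^n$-sum of permanents $T(\chi_1,\ldots,\chi_n)^3$ is exactly the right starting point, but your identification of the major arcs is inverted, and this is a fatal gap. You place the major arcs at tuples with a balanced character profile, each $\chi \in \hat G$ appearing about $n/|G| = 1$ times. For such a tuple the character matrix is $\sqrt n$ times a unitary matrix, whose permanent is \emph{exponentially small}, not large; indeed this is precisely where bounds in the spirit of Carlen--Lieb--Loss (or Lemma~\ref{GEC-lemma} / Theorem~\ref{nonabelian-GEC} in this paper) are deployed to show that such tuples contribute negligibly. One can also see the problem by counting: there are roughly $n!$ balanced tuples, so if each contributed on the order of $n!^3/n^n$ the total would overshoot the claimed answer by a factor of roughly $n!/n$.

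The true major arcs, both here (Section~\ref{sec:major-arcs}) and in \cite{EMM, moreon}, are the \emph{opposite} extreme: tuples in which $n - O(1)$ of the $\chi_i$ equal a single common character $\rho_0$. A constant tuple $\chi_i \equiv \rho_0$ contributes exactly $n!^3/n^n$, because the permanent collapses to $n!\,\rho_0(\sum_g g)$ and the twist $\overline{\rho_0(\sum_i f(i))}$ cancels it under the hypothesis $\sum_i f(i)=\sum_g g$ (using that $\rho_0(\sum_g g)^2=1$). There are $|G| = n$ choices of $\rho_0$, and this, not a large family of balanced tuples, is the source of the factor of $n$ in $n!^3/n^{n-1}$: see Lemma~\ref{lem-onedim-shift}. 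The singular series $\frakS(f)$ then arises from the sparse perturbations of each constant tuple, handled via argument projections and M\"obius inversion on the partition lattice. Your proposed ``major arcs'' lie deep inside what the paper calls the high-entropy minor arcs, which are discarded rather than estimated asymptotically.
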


Here $\frakS(f) = \exp(-\coll(f) / n^2)$, where $\coll(f)$ is the number of \emph{collisions} in $f$:
\begin{equation}
  \label{eq:collisions}
  \coll(f) = \big|\big\{ (i,j) \colon 1 \le i < j \le n,\ f(i)=f(j) \big\}\big| = \sum_{x \in G} \binom{|f^{-1}(x)|}{2}.
\end{equation}

In this paper we prove the following generalization to all finite groups, which also generalizes Theorem~\ref{main-theorem-hp-version}.

\begin{theorem}%
  \label{main-thm}
  Let $G$ be a group of order $n$ and let $f:\{1, \dots, n\} \to G$ be a function such that
  \begin{equation} \label{f-condition}
    \prod_{i=1}^n f(i) = \prod_{x \in G} x  \pmod{G'}.
  \end{equation}
  Then the number of solutions to $\pi_1 \pi_2 \pi_3 = f$ with $\pi_1, \pi_2, \pi_3: \{1, \dots, n\} \to G$ bijections is
  \[
    (\frakS(f) + o(1))\, |G^\ab|\, n!^3 / n^n.
  \]
\end{theorem}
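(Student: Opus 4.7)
The plan is a non-abelian analogue of the circle-method argument used for the abelian case in \cite{EMM, moreon}, replacing Pontryagin duality with Fourier analysis over the unitary dual $\widehat{G}$ of irreducible representations of $G$. I would first apply the Fourier inversion formula $\mathbf{1}[g=1] = n^{-1} \sum_{\rho \in \widehat{G}} d_\rho \chi_\rho(g)$ to each of the $n$ factors of the product indicator $\prod_i \mathbf{1}[\pi_1(i)\pi_2(i)\pi_3(i)f(i)^{-1} = 1]$ and expand, to obtain
\[
N(f) = n^{-n} \sum_{\boldsymbol{\rho} \in \widehat{G}^n} \prod_i d_{\rho_i} \sum_{\pi_1, \pi_2, \pi_3} \prod_i \chi_{\rho_i}\bigl(\pi_1(i)\pi_2(i)\pi_3(i)f(i)^{-1}\bigr),
\]
with the inner sum over triples of bijections $\{1,\dots,n\} \to G$. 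The main term should come from tuples $\boldsymbol{\rho} = (\rho_i)_{i=1}^n$ of $1$-dimensional representations --- i.e.\ characters of $G^\ab$ --- and everything else must be bounded.

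When all $\rho_i$ are $1$-dim, multiplicativity of characters lets the inner triple sum factor, and for each bijection $\pi_j$ the resulting factor is the permanent of the $n \times n$ matrix $\bigl(\chi_{\rho_i}(x)\bigr)_{i,x}$. In the fully diagonal case $\rho_1 = \dots = \rho_n = \chi$ each such permanent reduces to $n! \, \chi(\prod_{x \in G} x)$, and summing over $\chi \in \widehat{G^\ab}$ yields $n!^3 \, |G^\ab|$ via the Hall--Paige identity $(\prod_{x \in G} x)^2 \in G'$ (so $\sum_\chi \chi((\prod_x x)^2) = |G^\ab|$), combined with the hypothesis \eqref{f-condition} on $f$. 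This accounts for the factor $|G^\ab|\, n!^3/n^n$. A finer evaluation of the remaining $1$-dim contributions --- together with small residuals from higher-dim reps --- should then produce the Poisson-type weight $\frakS(f) = \exp(-\coll(f)/n^2)$: each pair $(i,j)$ with $f(i) = f(j)$ contributes a local correction of size $1 + O(1/n^2)$, which aggregates over all such pairs to exponentiate to $\frakS(f)$.

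The technical core is bounding the contribution from tuples $\boldsymbol{\rho}$ in which at least one $\rho_i$ has dimension $\ge 2$. Unlike the abelian case, $\chi_{\rho_i}(\pi_1(i)\pi_2(i)\pi_3(i))$ is a trace of a product rather than a product of characters, so one must work with matrix coefficients of the $\rho_i$: after expanding each trace, the sum over each bijection $\pi_j$ becomes a permanent of an $n \times n$ matrix whose entries are matrix coefficients of the $\rho_i$. Bounding the total contribution should combine the identity $\sum_\rho d_\rho^2 = n$ (which controls the weight $\prod_i d_{\rho_i}$) with sharp concentration and mixing estimates for these matrix-valued bijection Fourier coefficients, organised via a decomposition of $\widehat{G}^n$ by the ``representation profile'' of $\boldsymbol{\rho}$ --- a non-abelian analogue of the major-arcs/minor-arcs split in the classical circle method. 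The main obstacle, I expect, is the ``sparse'' regime in which most $\rho_i$ are trivial but a few are genuinely high-dimensional: there the Plancherel-type savings are weak, and one must extract genuine cancellation from the bijection constraint at just those few coordinates, uniformly across all finite groups $G$. Proving such non-commutative permanent/trace inequalities is likely to be the principal technical difficulty of the paper.
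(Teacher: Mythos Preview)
Your Fourier-analytic setup is essentially the same as the paper's: writing $1_S*1_S*1_S(f)$ as a sum over $\rho=\rho_1\otimes\cdots\otimes\rho_n$ of $\langle\hatS(\rho)^3,\rho(f)\rangle\dim\rho$, and splitting according to the profile of $(\rho_i)$. But your proposed major/minor arc decomposition is miscalibrated in a way that would cause real trouble.

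The correct split is not ``all $\rho_i$ one-dimensional'' versus ``some $\rho_i$ higher-dimensional''. The major arcs are those $\rho$ for which all but $O(1)$ of the $\rho_i$ equal a common one-dimensional $\rho_0$; after a shift (Lemma~\ref{lem-onedim-shift}) this reduces to $m$-sparse $\rho$ with $m=O(1)$. The $\frakS(f)$ factor does \emph{not} emerge from aggregating purely one-dimensional contributions: the paper extracts it via a physical-side computation using argument projections $P_X$, M\"obius inversion over the partition lattice, and a generating-function/Cauchy-theorem trick (Sections~\ref{sec:major-arcs}). Your one-line ``each collision contributes $1+O(1/n^2)$, exponentiate'' is the heuristic from Section~\ref{subsec:heuristic}, not the proof; turning it into one requires the partition-system machinery of Section~\ref{subsec:psystem}, which is substantial.

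More seriously, you have misidentified the hard minor-arc regime. You expect the worst case to be sparse $\rho$ with a few \emph{high-dimensional} factors. In fact the paper's operator-norm bound (Lemma~\ref{op-norm-bound}) handles those uniformly; the near-extremal case, where the bound is essentially sharp, is $\rho\approx\rho_0^m\otimes 1^{n-m}$ with $\rho_0$ a \emph{one-dimensional} representation of order~$2$. Dealing with this requires an inverse theorem (Theorem~\ref{op-norm-bound-inverse-theorem}) and a separate count of such exceptional $\rho$. Your plan to exploit $\sum d_\rho^2=n$ and Plancherel savings from high dimension gives nothing here, since all dimensions are~$1$. This is the genuine obstruction to a uniform argument over all finite groups, and it is invisible from the abelian case.
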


The case $f \equiv 1$ is equivalent to Theorem~\ref{main-theorem-hp-version}. Indeed, in this case~\eqref{f-condition} is precisely the Hall--Paige condition, so the theorem asserts that if $G$ satisfies the Hall--Paige condition then the number of solutions to $\pi_1 \pi_2 \pi_3 \equiv 1$ is $(e^{-1/2} + o(1))\, |G^\ab|\, n!^3/n^n$. But for every such triple $(\pi_1, \pi_2, \pi_3)$ we have \[
  \pi_1(x) \pi_2(x) = \pi_3(x)^{-1}
\]
for every $x \in \{1,\dots,n\}$, or equivalently
\[
  y\, \pi_2(\pi_1^{-1}(y)) = \pi_3(\pi_1^{-1}(y))^{-1}
\]
for every $y = \pi_1(x) \in \{1,\dots,n\}$.  So, the map $\phi = \pi_2 \circ \pi_1^{-1}$ is a bijection such that
\[
  y\, \phi(y) = \pi_3(\pi_1^{-1}(y))^{-1}
\]
is also a bijection: thus, $\phi$ is a complete mapping. Conversely, given a complete mapping $\phi$ and any bijection $\pi_1$ we can reverse the argument to find a unique triple $(\pi_1,\pi_2,\pi_3)$ satisfying $\pi_1 \pi_2 \pi_3 \equiv 1$; i.e., the correspondence $(\pi_1,\pi_2,\pi_3) \leftrightarrow \phi$ is $n!$-to-$1$. 

\subsection{Heuristic explanation of the asymptotic}%
\label{subsec:heuristic}

The asymptotic appearing in Theorem~\ref{main-theorem-hp-version} deserves a heuristic explanation. The following argument is similar to the one given in~\cite{EMM}, and uses the ``principle of maximum entropy'' from statistical physics: given limited observations about some unknown quantity, the probability distribution which best represents the current state of knowledge is the one with maximum entropy.\footnote{Cf.~Good~\cite{good}.}

Consider a random bijection $\phi: G \to G$, and let $\psi:G \to G$ be the function defined by
\[
  \psi(x) = x \phi(x).
\]
If we incorporate no knowledge about $\psi$ other than that $\psi$ is a function $G \to G$, the principle of maximum entropy would encourage us to think of $\psi$ as a uniformly random function $G \to G$. Thus a zeroth-order approximation to the true probability that $\psi$ is a bijection would be $n!/n^n$. This would lead us to guess that the number of complete mappings is roughly $n!^2 / n^n$.

We know that that cannot be right in general, because for instance if the Hall--Paige condition is not satisfied then the answer must be zero. We can inform our approximation by observing that
\begin{equation}
  \label{eqn:prod-psi-mod-G'-condition}
  \prod_{x \in G} \psi(x)
  \equiv \left( \prod_{x \in G} x \right)^2
  \equiv 1
  \pmod{G'}.
\end{equation}
The collection of functions $\psi$ satisfying this condition is a subgroup $H$ of $G^G$ of order $n^n / |G^\ab|$, and the most entropic distribution for $\psi$ consistent with this information is the uniform distribution on this subgroup $H$. Thus a first-order approximation to the true probability that $\psi$ is a bijection is $|G^\ab| \cdot n!/n^n$ if the Hall--Paige condition is satisfied, and zero otherwise.

Finally we have the most subtle factor in the asymptotic: the factor of $e^{-1/2}$. This factor is related to the number of collisions in $\psi$. Note that if $f$ is uniformly random, or uniformly random over $H$, then
\[
  \E \coll(f) = \binom{n}{2} \frac1n = \frac{n-1}{2}.
\]
By contrast, consider collisions in $\psi$. For any fixed distinct $x, y$ we have
\[
  \P(\psi(x) = \psi(y))
  = \P(\phi(x) \phi(y)^{-1} = x^{-1} y)
  = \frac1{n-1}
\]
since the random variable $\phi(x) \phi(y)^{-1}$ is uniform on $G \setminus \{1\}$; thus
\[
  \E \coll(\psi) = \binom{n}{2} \frac1{n-1} = \frac{n}{2}.
\]
Thus $\psi$ is slightly more prone to collisions than an ordinary random function. The maximum-entropy distribution for $\psi$ consistent with this observation is the \emph{Gibbs distribution} defined by
\begin{equation}
  \label{gibbs}
  \P(\psi = g) = \frac{e^{\beta \coll(g)}}{Z(\beta)} \cdot \frac{1_H(g)}{|H|},
\end{equation}
where $Z(\beta)$ is a normalizing factor called the \emph{partition function}, and the parameter $\beta$ must be chosen so that
\[
  \E \coll(\psi) = (\log Z)'(\beta) = \frac{n}{2}.
\]
By a Poisson heuristic for $\coll(f)$ we have
\[
  Z(\beta) = \E e^{\beta \coll(f)} \approx e^{(e^\beta - 1) \E \coll(f)} = e^{(e^\beta - 1) (n-1)/2},
\]
so
\[
  (\log Z)'(\beta) \approx e^\beta (n-1)/2,
\]
so we should have
\[
  \beta = 1/n + O(1/n^2).
\]
Since $\coll(g) = 0$ when $g$ is a bijection we therefore need to adjust our previous estimate by a factor of $Z(\beta) \approx e^{1/2}$.\ \footnote{It is interesting to compare~\eqref{gibbs} with the conclusion of Theorem~\ref{main-thm}. While the former is a heuristic approximation for the distribution of $x \phi(x)$, the latter is a rigorous assertion that the distribution of $x \phi(x) \phi'(x)$, where $\phi'$ is another random bijection, is approximately the Gibbs distribution with $\beta = -1/n^2$ (concentrated on a coset of $H$).} 

The further correction expressed in Theorem~\ref{main-theorem-hp-inv-version}, and indeed still smaller corrections, can also be derived in this fashion, by counting ``higher-order'' collisions, but doing so even in this heuristic setting rapidly becomes extremely tedious, as the array of possible collision types exhibits combinatorial explosion. The interested reader should refer to Section~\ref{sec:asymptotic-expansion}, where we develop a more systematic approach.

This argument is a special case of a general method for informing counting conjectures: we guess a model for some random variable (such as $\psi$) and, if it is found to be inadequate, the principle of maximum entropy offers a systematic way of updating our guess.  We caution that the difficult part is knowing when we are close enough to the truth to stop.  For the purposes of the argument above, the only reason that no further corrections to the distribution of $\psi$ are made is that none of the ones we could think of changed the answer by more than a factor of $1+o(1)$, but it is very difficult to rule out the possibility that some hypothetical further observation might change the picture by a much larger amount.

\subsection{Layout of the paper}

The paper is organized as follows.  We next (Section~\ref{sec:preliminaries}) collect some standard tools and conventions which will form the foundation of all our arguments.

With these in place, we can give a detailed account of the proof of Theorem~\ref{main-thm} (and thereby of Theorem~\ref{main-theorem-hp-version}) in Section~\ref{sec-outline}.  The key ingredients for this proof are proven in Sections~\ref{sec:major-arcs},~\ref{sec:low-entropy-minor-arcs}, and~\ref{sec:high-entropy-minor-arcs} (split up in the way explained in Section~\ref{sec-outline}).

The remaining sections explain how to recombine these ingredients to prove the refinements discussed above.  In particular Section~\ref{sec:quant-proof} handles the quantitative results discussed in Section~\ref{subsec:quant}, and Section~\ref{sec:asymptotic-expansion} deals with the asymptotic expansion from Section~\ref{subsec:asymptotic}.

\section{Preliminaries}%
\label{sec:preliminaries}

We review here some relevant background.

\subsection{Nonabelian Fourier analysis}

We briefly recall here the fundamentals of nonabelian Fourier analysis. The reader needing a better introduction could refer to Tao~\cite[Chapter~18]{tao}, with whom our notational conventions agree.

Given a finite group $G$, we write $\int$ for averages over $G$, $\rho$ for a representation of $G$ (usually irreducible, always unitary and finite-dimensional), and $\chi$ for the corresponding character $x \mapsto \tr \rho(x)$ of $G$. The Fourier transform of a function $f:G \to \C$ at an irreducible representation $\rho:G\to U(V)$ is defined by
\[
  \widehat{f}(\rho) = \int_G f(x) \rho(x).
\]
Note that $\widehat{f}(\rho)$ defines an operator on $V$. The space of operators on $V$ is denoted $\HS(V)$ and equipped with the Hilbert--Schmidt inner product
\[
  \langle R, S\rangle = \tr(R S^*).
\]
We have the Fourier inversion formula
\[
  f(x) = \sum_\rho \langle \widehat{f}(\rho), \rho(x)\rangle \dim \rho,
\]
and the Parseval (or Plancherel) identity
\[
  \langle f, g\rangle = \sum_\rho \langle \widehat{f}(\rho), \widehat{g}(\rho)\rangle \dim \rho.
\]
The sums here run over the irreducible representations of $G$. Note in particular that if $\widehat{f}(\rho) = 0$ for all $\rho$, then $f = 0$.

Let $\rho:G\to U(V)$ be an irreducible representation of $G$. Let $e_1, \dots, e_d$ be an orthonormal basis of $V$, and let $E_{ij} = e_i \otimes e_j^* \in \HS(V)$. The functions $\langle E_{ij}, \rho(x)\rangle$, as $\rho$ runs over irreducible representations and $i$ and $j$ run over $\{1, \dots, d\}$ with $d = \dim \rho$, form an orthogonal basis for $L^2(G)$. Indeed, the fact that they span is clear from the Fourier inversion formula, and orthogonality follows from Plancherel: by comparison with the Fourier inversion formula the function $f(x) = \langle E_{ij}, \rho(x) \rangle$ must have Fourier transform
\[
  \widehat{f}(\rho') = \begin{cases}
    E_{ij} / \dim \rho & \text{if}~\rho'=\rho, \\
    0                  & \text{else},
  \end{cases}
\]
so
\[
  \int_G \langle E_{ij}, \rho(x)\rangle \overline{\langle E_{i'j'}, \rho'(x)\rangle} = \begin{cases}
    1 / \dim \rho & \text{if}~\rho=\rho', i=i', j=j' \\
    0             & \text{else}.
  \end{cases}
\]

The convolution $f \ast g$ of two functions $f,g: G \to \C$ is the function $G \to \C$ given (under our conventions) by
\[
  (f \ast g)(x) = \int_G f(y) g(y^{-1} x) .
\]
The key feature of the Fourier transform is that it ``diagonalizes'' (as much as possible anyway) the operation of convolution:
\[
  \widehat{f * g}(\rho) = \widehat{f}(\rho)\, \widehat{g}(\rho).
\]
The operation on the right is the usual multiplication of operators in $\HS(V)$.
%

Given representations $\rho_1: G_1 \to U(V_1)$ and $\rho_2: G_2 \to U(V_2)$, the tensor product $\rho_1 \otimes \rho_2$ is the representation $G_1 \times G_2 \to U(V_1 \otimes V_2)$ defined on pure tensors by
\[
  (\rho_1 \otimes \rho_2)(g,h) \cdot (u \otimes v) = (\rho_1(g)u) \otimes (\rho_2(g) v).
\]
It is well known that the irreducible representations of $G_1 \times G_2$ are precisely the tensor products $\rho_1 \otimes \rho_2$ of irreducible representations $\rho_1$, $\rho_2$ of $G_1$, $G_2$, respectively. Two such representations $\rho_1 \otimes \rho_2$ and $\rho_1' \otimes \rho_2'$ are isomorphic if and only if $\rho_1 \cong \rho'_1$ and $\rho_2 \cong \rho'_2$.

In the special case that $G_1 = G_2 = G$, the restriction of $\rho_1 \otimes \rho_2$ to the diagonally embedded copy of $G$ is again a representation of $G$. Conventionally in representation theory this representation is also denoted simply $\rho_1 \otimes \rho_2$, and it is understood from context whether $\rho_1 \otimes \rho_2$ is a representation of $G\times G$ or of $G$. For us, the interplay between these interpretations of $\otimes$ is essential, so we will use $\hatotimes$ to denote the latter. Thus $\rho_1 \otimes \rho_2$ is a representation of $G^2$ and $\rho_1 \hatotimes \rho_2$ is a representation of $G$.

\subsection{Argument projections}%
\label{subsec:argproj}

Given $X \subset \{1, \dots, n\}$, we identify $L^2(G^X)$ with the subspace of $L^2(G^n)$ consisting of functions $f:G^n \to \C$ of $(g_1,\dots,g_n)$ that depend only on variables $g_i$ for $i \in X$. We denote by $Q_X : L^2(G^n) \to L^2(G^X)$ the corresponding orthogonal projection. Explicitly,
\[
  Q_X = \prod_{i \notin X} E_i,
\]
where $E_i$ is the operator that ``integrates out'' the single variable $g_i$: i.e., for $F \in L^2(G^n)$,
\[
  \big(E_i F\big)(g_1,\dots,g_n) = \int_{g \in G} F(g_1,\dots,g_{i-1},g,g_{i+1},\dots,g_n).
\]
These subspaces $L^2(G^X)$ are nested: if $X \subset Y$ then $L^2(G^X) \subset L^2(G^Y)$. We also define inclusion--exclusion-type projections $P_X$ for $X \subset \{1,\dots,n\}$ by
\[
  P_X = \prod_{i \notin X} E_i \prod_{i \in X} (1 - E_i).
\]
This is the projection onto the space
\[
  L^2(G^X) \cap \bigcap_{Y \subsetneq X} L^2(G^Y)^\perp;
\]
informally, the space of functions that depend ``exactly'' on the variables in $X$.  By inclusion--exclusion we have
\begin{equation} \label{PX-ie}
  P_X = \sum_{Y \subset X} (-1)^{|X|-|Y|} Q_Y
\end{equation}
and
\begin{equation} \label{QX-ie}
  Q_X = \sum_{Y \subset X} P_Y.
\end{equation}

We now describe the relationship between the projections $P_X$ and Fourier analysis on $G^n$. The irreducible representations $\rho$ of $G^n$ are precisely the tensor products
\[
  \rho = \rho_1 \otimes \cdots \otimes \rho_n,
\]
where $\rho_1, \dots, \rho_n$ are irreducible representations of $G$. Let
\[
  \supp \rho = \{i \in \{1, \dots, n\} : \rho_i \neq 1\}.
\]

\begin{lemma}%
  \label{lem:P_X-on-fourier}
Let $F \in L^2(G^n)$.
\begin{enumerate}[label=(\roman*)]
  \item
    \[
      \widehat{P_X F}(\rho) = \begin{cases}
        \widehat{F}(\rho) & \textup{if}~\supp \rho = X, \\
        0 & \textup{else}.
      \end{cases}
    \]
  \item
    \[
      P_X F(g) = \sum_{\rho \colon \supp \rho = X} \langle \widehat{F}(\rho), \rho(g) \rangle \dim \rho.
    \]
\end{enumerate}
\end{lemma}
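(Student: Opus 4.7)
The plan is to reduce everything to understanding how a single integrating operator $E_i$ acts on the Fourier side, and then composing. The key observation is that for an irreducible representation $\rho = \rho_1 \otimes \cdots \otimes \rho_n$ of $G^n$, Schur orthogonality gives
\[
  \int_G \rho_i(g_i)\, dg_i = \begin{cases} I & \text{if } \rho_i = 1, \\ 0 & \text{else,} \end{cases}
\]
where $I$ denotes the identity (a scalar in the trivial case).

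With this in hand, the first step is to compute $\widehat{E_i F}(\rho)$. Since $(E_i F)(g_1,\dots,g_n)$ does not depend on $g_i$, writing out
\[
  \widehat{E_i F}(\rho) = \int_{G^n} (E_i F)(g)\, \rho_1(g_1) \otimes \cdots \otimes \rho_n(g_n)\, dg
\]
and factoring the $g_i$-integral out gives $\widehat{E_i F}(\rho) = \widehat{F}(\rho)$ when $\rho_i = 1$ and $0$ otherwise. Equivalently, in Fourier space $E_i$ is the orthogonal projector onto those $\rho$ with $\rho_i = 1$. Consequently $1 - E_i$ is the complementary projector onto those $\rho$ with $\rho_i \neq 1$.

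Since the $E_i$ for distinct $i$ commute (they are averages over disjoint coordinates), so do the $1 - E_i$ and any combination of both, and we may compose these one coordinate at a time. This gives
\[
  \widehat{P_X F}(\rho) = \widehat{F}(\rho) \cdot \prod_{i \notin X} [\rho_i = 1] \cdot \prod_{i \in X} [\rho_i \neq 1],
\]
which is $\widehat{F}(\rho)$ exactly when $\supp \rho = X$ and $0$ otherwise. This is part (i). Part (ii) is then immediate from the Fourier inversion formula applied to $P_X F$: the general sum over irreducible $\rho$ of $G^n$ collapses to the sum over those $\rho$ with $\supp \rho = X$.

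There is no real obstacle; the only thing to be careful about is the bookkeeping for the tensor product representations and the fact that the trivial representation appears with multiplicity one in each coordinate, so that Schur orthogonality produces a clean $0$ or $1$ in each $g_i$-integral. One could alternatively derive (i) directly from the inclusion–exclusion identity \eqref{PX-ie} combined with the analogous (easier) statement $\widehat{Q_Y F}(\rho) = \widehat{F}(\rho) \cdot [\supp \rho \subset Y]$, but the approach above is more transparent.
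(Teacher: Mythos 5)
Your proof is correct and follows essentially the same route as the paper: compute $\widehat{E_i F}(\rho)$ via the vanishing of $\int_G \rho_i$ for nontrivial $\rho_i$, compose over coordinates to get part (i), and apply Fourier inversion for part (ii). The paper is slightly terser but the underlying argument is identical.
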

In other words, the projection $P_X$ simply discards all Fourier coefficients except those with support exactly $X$.
\begin{proof}
  For an irreducible representation $\rho_i \colon G \to \HS(V_i)$ we have
  \[
    \int_{x \in G} \rho_i(x) = \begin{cases} 1 &\colon \rho_i = 1 \\ 0 &\colon \rho_i \ne 1; \end{cases}
  \]
  this follows by considering the Fourier transform of the constant function $1$ on $G$. Hence for any $F \in L^2(G^n)$ and any $\rho = \rho_1 \otimes \dots \otimes \rho_n$,
  \[
    \widehat{E_i F}(\rho)
    = \begin{cases} \widehat{F}(\rho) &\colon \rho_i = 1 \\ 0 &\colon \rho_i \ne 1. \end{cases}
  \]
  The first part follows. The second part follows by Fourier inversion.
\end{proof}

In particular we note the interaction between projections $P_X$ and $Q_X$ and convolution.
\begin{corollary}%
  \label{cor:proj-diag}
  If $F_1, F_2 \in L^2(G^n)$ and $X \subset \{1,\dots,n\}$ then
  \[
    P_X(F_1 \ast F_2) = P_X F_1 \ast P_X F_2
  \]
  and
  \[
    Q_X(F_1 \ast F_2) = Q_X F_1 \ast Q_X F_2.
  \]
  Moreover if $Y \subset \{1,\dots,n\}$ and $Y \ne X$ then
  \[
    P_X F_1 \ast P_Y F_2 = 0.
  \]
\end{corollary}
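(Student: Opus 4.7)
The plan is to prove all three statements by passing to the Fourier side, where Lemma~\ref{lem:P_X-on-fourier} makes the projections $P_X$ into indicator functions on the set of representations of support $X$, and convolution becomes pointwise multiplication of operator-valued Fourier coefficients.

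First I would verify the third statement. For any irreducible representation $\rho$ of $G^n$, Lemma~\ref{lem:P_X-on-fourier} gives $\widehat{P_X F_1}(\rho) = \widehat{F_1}(\rho) \mathbf{1}_{\supp \rho = X}$ and $\widehat{P_Y F_2}(\rho) = \widehat{F_2}(\rho) \mathbf{1}_{\supp \rho = Y}$. Since the Fourier transform converts convolution to multiplication,
\[
  \widehat{P_X F_1 \ast P_Y F_2}(\rho) = \widehat{P_X F_1}(\rho)\, \widehat{P_Y F_2}(\rho) = \widehat{F_1}(\rho)\,\widehat{F_2}(\rho)\, \mathbf{1}_{\supp \rho = X}\, \mathbf{1}_{\supp \rho = Y}.
\]
When $X \ne Y$ the product of indicators is identically zero for every $\rho$, so $P_X F_1 \ast P_Y F_2 = 0$ by the Fourier inversion formula (or Parseval).

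Next, the first identity. By the same calculation with $Y = X$,
\[
  \widehat{P_X F_1 \ast P_X F_2}(\rho) = \widehat{F_1}(\rho)\,\widehat{F_2}(\rho)\, \mathbf{1}_{\supp \rho = X} = \widehat{F_1 \ast F_2}(\rho)\, \mathbf{1}_{\supp \rho = X},
\]
and the right-hand side is $\widehat{P_X(F_1 \ast F_2)}(\rho)$ by Lemma~\ref{lem:P_X-on-fourier} again. Injectivity of the Fourier transform gives $P_X(F_1 \ast F_2) = P_X F_1 \ast P_X F_2$.

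Finally, the identity for $Q_X$ follows by expanding via \eqref{QX-ie}: writing $Q_X = \sum_{Y \subset X} P_Y$ and using bilinearity of convolution together with the two cases just established,
\[
  Q_X F_1 \ast Q_X F_2 = \sum_{Y, Y' \subset X} P_Y F_1 \ast P_{Y'} F_2 = \sum_{Y \subset X} P_Y F_1 \ast P_Y F_2 = \sum_{Y \subset X} P_Y(F_1 \ast F_2) = Q_X(F_1 \ast F_2).
\]
There is no real obstacle here: the content is entirely in Lemma~\ref{lem:P_X-on-fourier}, and once that is in hand everything is a short Fourier-side manipulation.
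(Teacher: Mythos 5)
Your proof is correct and follows the same route the paper intends: the paper's proof is the one-line remark that the claims ``follow immediately from Lemma~\ref{lem:P_X-on-fourier} and properties of Fourier analysis and convolution,'' and your argument is precisely the fleshed-out version of that, passing to the Fourier side where $P_X$ becomes an indicator on $\{\rho : \supp\rho = X\}$ and convolution becomes pointwise multiplication. Deriving the $Q_X$ identity from the $P_X$ cases via \eqref{QX-ie} rather than directly from the Fourier description of $Q_X$ is a harmless stylistic variation.
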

\begin{proof}
  These follow immediately from Lemma~\ref{lem:P_X-on-fourier} and properties of Fourier analysis and convolution.
\end{proof}

\subsection{M\"obius inversion for partitions}%
\label{subsec:mobius}

\def\discrete{0}
\def\trivial{1}

Let $X$ be an $m$-element set. A \emph{partition} of $X$ is a set $\calP$ of nonempty subsets $p \subset X$ (the \emph{parts} or \emph{cells} of $\calP$) such that every element of $X$ is a member of exactly one part of $\calP$. Given partitions $\calP, \calQ$ of $X$, we say that $\calP$ \emph{refines} $\calQ$, and $\calQ$ \emph{coarsens} $\calP$, and we write $\calP \leq \calQ$, if every cell of $\calP$ is contained in a cell of $\calQ$: this makes the set $\Pi_X$ of all partitions of $X$ into a partially ordered set called the \emph{partition lattice}. As usual, given partitions $\calP$ and $\calQ$ we write $\calP \wedge \calQ$ for their meet (i.e., their coarsest common refinement) and $\calP \vee \calQ$ for their join (i.e., their finest common coarsening).  The partition of $X$ into singletons is called the \emph{discrete partition}, denoted $\discrete$, and the partition $\{X\}$ is called the \emph{trivial partition} (or \emph{indiscrete partition}), denoted $\trivial$: these are the minimal and maximal elements of the partition lattice, respectively.

The \emph{incidence algebra of the partition lattice} is the set of functions $\alpha$ assigning to each pair of partitions $(\calP, \calQ)$ with $\calP \leq \calQ$ a scalar $\alpha(\calP, \calQ)$ (in some unital commutative ring). Addition is defined pointwise, and multiplication is defined by \emph{convolution}:
\[
  (\alpha*\beta)(\calP, \calQ) = \sum_{\calR\colon \calP \leq \calR \leq \calQ} \alpha(\calP, \calR) \beta(\calR, \calQ).
\]
The unit element is
\[
  \delta(\calP, \calQ) = \begin{cases}
    1 & \text{if}~\calP=\calQ, \\
    0 & \text{else}.
  \end{cases}
\]
An element $\alpha$ of the incidence algebra is invertible if and only if each diagonal element $\alpha(\calP, \calP)$ is invertible. The inverse of the constant function $1$ is called the \emph{M\"obius function} $\mu$, and is given by the formula
\[
  \mu(\calP, \calQ) =
  (-1)^{|\calP| - |\calQ|}
  \prod_{q \in \calQ} (|\{p \in \calP: p \subset q\}| - 1)!
\]
(see Stanley~\cite[Example~3.10.4]{stanley}). In the special case that $\calP$ is discrete we omit the symbol from the notation: thus
\[
  \mu(\calQ) = \mu(\discrete, \calQ) = (-1)^{m-|\calQ|} \prod_{q \in \calQ} (|q| - 1)!.
\]
Note that although M\"obius inversion is defined most naturally for functions of pairs of partitions, the following two inversion formulae for univariate functions follow:
\begin{align*}
  \alpha(\calP) = \sum_{\calQ \colon \calP \leq \calQ} \beta(\calQ)
   & \iff \beta(\calP) = \sum_{\calQ \colon \calP \leq \calQ} \mu(\calP, \calQ) \alpha(\calQ); \\
  \alpha(\calP) = \sum_{\calQ \colon \calQ \leq \calP} \beta(\calQ)
   & \iff \beta(\calP) = \sum_{\calQ \colon \calQ \leq \calP} \alpha(\calQ) \mu(\calQ, \calP). \\
\end{align*}

Partitions arise in our setting when we consider the set of injective functions $f \colon X \to G$ and expand its indicator function using inclusion--exclusion; i.e., rewriting inequality constraints $f(x) \ne f(x')$ as equality constraints $f(x) = f(x')$.  M\"obius inversion for partitions captures this cleanly: see Lemma~\ref{mobius-inversion-1} below.
We thereby relate incomplete character sums to sums of complete character sums with attached M\"obius function coefficients.

\subsection{Cauchy's residue theorem}

We will use the following consequence of the residue theorem.

\begin{lemma}%
  \label{lem:cauchy-trick}
    Let $f(u) = a_0 + a_1 u + a_2 u^2 + \cdots$ be a function in a complex variable $u$, that converges uniformly on $|u| \le R$, and obeys the estimate $|f(u)| \le A$ for $|u|=R$.  Then for any $|u|<R$ and $k \ge 0$, we have
    \[
        \big| f(u) - a_0 - a_1 u - \cdots - a_k u^k \big| \le A \frac{(|u|/R)^{k+1}}{1 - |u|/R}.
    \]
\end{lemma}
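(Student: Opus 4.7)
The plan is to apply Cauchy's coefficient estimates to each $a_j$ and then sum the geometric tail. First I would obtain the bound $|a_j| \le A/R^j$ for every $j \ge 0$. This follows directly from Cauchy's integral formula
\[
  a_j = \frac{1}{2\pi i} \oint_{|u|=R} \frac{f(u)}{u^{j+1}} \, du,
\]
which is valid because $f$ converges uniformly on $|u| \le R$ and hence is holomorphic on $|u|<R$ with continuous extension to $|u|=R$. Estimating trivially using $|f(u)| \le A$ on $|u|=R$ yields $|a_j| \le A/R^j$.

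Next, for $|u| < R$, write the tail of the series and estimate term by term:
\[
  \left| f(u) - \sum_{j=0}^{k} a_j u^j \right|
  = \left| \sum_{j=k+1}^{\infty} a_j u^j \right|
  \le \sum_{j=k+1}^{\infty} \frac{A}{R^j} |u|^j
  = A \sum_{j=k+1}^{\infty} (|u|/R)^j
  = A \frac{(|u|/R)^{k+1}}{1 - |u|/R},
\]
where the geometric series converges because $|u|/R < 1$. This gives exactly the claimed bound.

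The only step requiring any thought is justifying the use of Cauchy's formula on the closed disk $|u|=R$: uniform convergence of the power series on $|u| \le R$ ensures $f$ is continuous there and equal to its Taylor series throughout. Once that is observed, every step is routine, so this lemma poses no genuine obstacle beyond bookkeeping.
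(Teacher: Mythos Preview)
Your proof is correct. The paper takes a slightly different but equivalent route: instead of bounding each coefficient $a_j$ separately via Cauchy's integral formula and then summing the geometric tail, it writes down a single kernel
\[
  \rho(z) = \frac{1}{z-u} - \frac{1}{z} - \frac{u}{z^2} - \cdots - \frac{u^k}{z^{k+1}} = \frac{(u/z)^{k+1}}{z-u}
\]
and observes that $\frac{1}{2\pi i}\oint_{|z|=R} \rho(z) f(z)\,dz$ equals the full remainder $f(u)-a_0-\cdots-a_k u^k$ in one step; the bound then follows from $|\rho(z)| \le (|u|/R)^{k+1}/(R-|u|)$ on $|z|=R$. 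Your approach is the standard ``Cauchy estimates plus geometric series'' argument and is perhaps more transparent; the paper's version packages the same idea into one integral and one estimate. Neither has any real advantage over the other here.
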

\begin{proof}
Consider the test function
\begin{align*}
    \rho(z) &= \frac1{z-u} - \frac1z - \frac{u}{z^2} - \cdots - \frac{u^k}{z^{k+1}} \\
    &= \frac{(u/z)^{k+1}}{z-u}.
\end{align*}
By the residue theorem,
\[
    \frac1{2 \pi i} \oint_{|z|=R} \rho(z) f(z)\, dz = f(u) - a_0 - a_1 u - \cdots - a_k u^k,
\]
but since
\[
  |\rho(z)| \le \frac{(|u|/R)^{k+1}}{R-|u|}
\]
on $|z|=R$, the claimed bound follows.
\end{proof}

\section{Outline of the proofs}%
\label{sec-outline}

Let $G$ be a group of order $n$, and denote by $S \subset G^n$ the set of all tuples $(x_1, \dots, x_n) \in G^n$ with $x_i \neq x_j$ for $i \neq j$ (equivalently, the set of bijective functions $\{1,\dots,n\}\to G$). Let $f \in G^n$. Our main theorem, Theorem~\ref{main-thm}, asserts that if $f$ obeys~\eqref{f-condition} then
\[
  1_S * 1_S * 1_S(f) = (\frakS(f) + o(1)) \, |G^\ab| \, \pfrac{n!}{n^n}^3.
\]
By Fourier analysis, we have
\begin{equation} \label{fourier-expression}
  1_S*1_S*1_S(f) = \sum_\rho \langle \hatS(\rho)^3, \rho(f) \rangle \dim \rho,
\end{equation}
where the sum runs over all irreducible representations
\[
  \rho = \rho_1 \otimes \cdots \otimes \rho_n
\]
of $G^n$, where each $\rho_i$ is an irreducible representation of $G$. We will divide the summation in~\eqref{fourier-expression} into several parts depending on the multiplicities of the factors $\rho_1, \dots, \rho_n$.

If almost all of the factors $\rho_i$ are isomorphic to some common one-dimensional representation $\rho_0$, then we call $\rho$ a \emph{major arc}. We will see that $\langle \hatS(\rho)^3, \rho(f)\rangle$ is invariant under shifts of the form $\rho \mapsto \rho \hatotimes \psi^n$ for one-dimensional $\psi$, so the contribution from the major arcs is exactly $|G^\ab|$ (the number of one-dimensional representations) times that from the \emph{sparse} representations, i.e., those $\rho$ in which only $m$ factors $\rho_i$ are nontrivial, for some small $m$. We call $\rho$ \emph{$m$-sparse} if exactly $m$ factors $\rho_i$ are nontrivial.

The sparse representations are the topic of Section~\ref{sec:major-arcs}. Note for example that the contribution from the trivial representation is $(n!/n^n)^3$. Other sparse representations contribute a comparable amount to the sum. Using argument projections and M\"obius inversion on the partition lattice to reduce to complete character sums, we will prove that
\begin{equation}
  \label{major-arcs-estimate}
  \sum_{\substack{m\textup{-sparse}~\rho \\ 0 \leq m \leq 2M}}
  \langle \hatS(\rho)^3, \rho(f)\rangle \dim \rho
  = \left(
  \frakS(f) + O\big(1/(M+1)!\big) + O(M^2/n)
  \right) \pfrac{n!}{n^n}^3
\end{equation}
provided $M <c n^{1/2}$ for some absolute constant $c$.
This will be established in Proposition~\ref{major-arcs}.
In particular, the dominant contribution comes from $O(1)$-sparse representations.

All other $\rho$ are called \emph{minor arcs}, and their contribution is bounded using
\[
  |\langle \hatS(\rho)^3, \rho(f) \rangle| \leq \|\hatS(\rho)\|_3^3 \leq \|\hatS(\rho)\|_\op \|\hatS(\rho)\|_\HS^2,
\]
where $\|\cdot\|_3$ is the Schatten 3-norm\footnote{The Schatten $p$-norm $\|\cdot\|_p$ of a linear operator with singular values $(\lambda_i)$ is $\left(\sum_i \lambda_i^p \right)^{1/p}$, and so $\|\cdot\|_2 = \|\cdot\|_\HS$.  Similarly the operator norm is $\|\cdot\|_\op = \max_i \lambda_i$.}. Minor arcs may be further categorized by their entropy: suppose up to permutation of factors we have
\[
  \rho = \rho_1^{a_1} \otimes \cdots \otimes \rho_k^{a_k},
\]
where $\rho_1, \dots, \rho_k$ are distinct irreducible representations of $G$ and $a_1 + \cdots + a_k = n$. The \emph{entropy} of $\rho$ is defined by
\[
  H(\rho) = \sum_{i=1}^k \frac{a_i}{n} \log \frac{n}{a_i}.
\]
Note that if $H(\rho) = o(1)$ then the largest $a_i$ is $(1-o(1))n$. Informally, we say that $\rho$ is a \emph{low-entropy} minor arc if $H(\rho) = o(1)$, and if additionally the factor $\rho_i$ of multiplicity $(1-o(1))n$ is one-dimensional; otherwise $\rho$ is a \emph{high-entropy} minor arc.

Low-entropy minor arcs are the subject of Section~\ref{sec:low-entropy-minor-arcs}. As with the major arcs we may focus on the sparse case: at the cost of a factor of $|G^\ab|$ we may assume that the representation with multiplicity $(1-o(1))n$ is the trivial representation. We attack these representations with the following weapons:
\begin{enumerate}[label=(\roman*)]
  \item a (more or less sharp) estimate for the total $L^2$ mass on sparse representations (dubbed \emph{sparseval}):
        \[
          \sum_{m\textup{-sparse}~\rho} \|\hatS(\rho)\|_\HS^2 \dim \rho
          \leq O(m^{1/4}) e^{O(m^{3/2} / n^{1/2})} \binom{n}{m}^{1/2} \pfrac{n!}{n^n}^2;
        \]
  \item a uniform bound for the operator norm for an $m$-sparse representation: if $m \leq n/2$ then
        \[
          \|\hatS(\rho)\|_\op \leq \binom{n}{m}^{-1/2} \frac{n!}{n^n};
        \]
  \item an ``inverse theorem'' capturing the near-equality case of the above bound:
        \[
          \|\hatS(\rho)\|_\op \leq e^{-c\eps m} \binom{n}{m}^{-1/2} \frac{n!}{n^n}
        \]
        unless more than $(1-\eps)m$ of the nontrivial factors of $\rho$ are equal to a common one-dimensional representation $\rho_0$ of order two.
\end{enumerate}
By combining these we prove
\begin{equation}
  \label{low-entropy-minor-arcs-estimate}
  \sum_{m\textup{-sparse}~\rho} \|\hatS(\rho)\|_\op \|\hatS(\rho)\|_\HS^2 \dim \rho
  \leq O\left(e^{-c \frac{\log(n/m)}{\log n} m} \pfrac{n!}{n^n}^3\right)
\end{equation}
for $m \leq cn / (\log n)^2$.
This will be established in Proposition~\ref{low-entropy-minor}.

Finally in Section~\ref{sec:high-entropy-minor-arcs} we bound the contribution from high-entropy minor arcs. For these we use the still-cruder bound
\[
  |\langle \hatS(\rho)^3, \rho(f) \rangle|
  \leq \|\hatS(\rho)\|_\op \|\hatS(\rho)\|_\HS^2
  \leq \|\hatS(\rho)\|_\HS^3.
\]
For high-entropy minor arcs $\rho$ we prove a bound for $\|\hatS(\rho)\|_\HS$ roughly of the form
\[
  \|\hatS(\rho)\|_\HS^2 \dim \rho \lesssim e^{-H(\rho) n} \pfrac{n!}{n^n}^2.
\]
Thus we deduce a bound of the rough shape
\[
  e^{H(\rho) n} \|\hatS(\rho)\|_\HS^3 \dim \rho \lesssim e^{-H(\rho) n / 2} \pfrac{n!}{n^n}^3.
\]
Note that $e^{H(\rho)n}$ is roughly the size of the orbit of $\rho$ under permutation of factors. Thus, assuming we can bound the number of orbits satisfactorily, we can try to pigeonhole high-entropy minor arcs $\rho$ by the size of $H(\rho)$, and prove
\[
  \sum_{\rho\colon H(\rho) \geq cn}
  \|\hatS(\rho)\|_\HS^3 \dim \rho
  \leq e^{-c'n} \pfrac{n!}{n^n}^3.
\]
In practice, we argue a little differently: we obtain a tidier argument and a stronger bound by using generating function techniques to bound the sum over orbits (and the quantity $H(\rho)$ does not actually appear outside of this outline). In any event, if $R_m$ is the set of all $\rho$ that have some one-dimensional factor of multiplicity at least $n-m$, then we prove
\begin{equation} \label{high-entropy-minor-arcs-estimate}
  \sum_{\rho \in R_m^c} \|\hatS(\rho)\|_\HS^3 \dim \rho \leq e^{-cm} \pfrac{n!}{n^n}^3
\end{equation}
for $m \geq C n^{3/4}$.
This will be established in Proposition~\ref{prop:dense-minor-arcs}.

By combining~\eqref{major-arcs-estimate},~\eqref{low-entropy-minor-arcs-estimate}, and~\eqref{high-entropy-minor-arcs-estimate}, we have
\begin{align*}
  1_S*1_S*1_S(f)
   & = \sum_\rho \langle \hatS(\rho)^3, \rho(f)\rangle \dim \rho \\
   & = \left(\frakS(f) + O(1/M!) + O(M^2/n) \right) \, |G^\ab| \, \pfrac{n!}{n^n}^3 \\
   & \qquad + O\left( e^{-c \frac{\log(n/M)}{\log n} M} \, |G^\ab| \right) \, \pfrac{n!}{n^n}^3 \\
   & \qquad + e^{-c n^{3/4}} \pfrac{n!}{n^n}^3.
\end{align*}
Theorem~\ref{main-thm} follows by taking $M$ to be a sufficiently slowly growing function of $n$ (say a small power of $n$).

We briefly discuss the other results.
In Section~\ref{sec:quant-proof}, our aim is to prove the Hall--Paige conjecture for all groups, not just sufficiently large groups. An argument of Wilcox reduces the problem to simple groups. In particular, we may assume that $G$ has no low-dimensional representations (a weak version of quasirandomness). In this circumstance our minor arc bounds become easier and stronger. In the low-entropy minor arcs, the near-equality case (see weapons~(ii)--(iii) above) is now impossible, and we can prove a stronger version of~\eqref{low-entropy-minor-arcs-estimate}. In the high-entropy minor arcs, we combine this quasirandomness with sparseval (weapon~(i)) to get an alternative to the bounds in Section~\ref{sec:high-entropy-minor-arcs} which is useful in some regimes. We use these stronger bounds in Section~\ref{sec:quant-proof} to prove Proposition~\ref{prop:baby-quantitative}, which proves the Hall--Paige conjecture except for a handful of simple groups.

In Section~\ref{sec:asymptotic-expansion}, for simplicity in the special case $f \equiv 1$, we discuss lower-order terms in Theorem~\ref{main-thm}, in particular Theorem~\ref{main-theorem-hp-inv-version}. Our approach differs only in its treatment of the major arcs. The task is simplified by working exclusively with the case $f \equiv 1$, but simultaneously harder in that we wish to evaluate the $O(M^2/n)$ term in~\eqref{major-arcs-estimate} up to an error of $O_M(1/n^2)$.

\section{Major arcs}%
\label{sec:major-arcs}

In this section we estimate the contribution to~\eqref{fourier-expression} from the major arcs: those $\rho$ with $n-O(n^{1/2})$ factors isomorphic to the same one-dimensional representation $\rho_0$ of $G$. The following lemma shows that this contribution is exactly $|G^\ab|$ (the number of one-dimensional representations) times the contribution from those with $\rho_0$ trivial.

\begin{lemma}%
  \label{lem-onedim-shift}
  Suppose $\psi$ is a one-dimensional representation of $G$, and suppose $\rho = \rho_1 \otimes \cdots \otimes \rho_n$ and $\rho' = \rho'_1 \otimes \cdots \otimes \rho'_n$ are irreducible representations of $G^n$ such that $\rho'_i = \rho_i \hatotimes \psi$ for each $i$. Then
  \[
    \langle \hatS(\rho')^3, \rho'(f)\rangle = \langle \hatS(\rho)^3, \rho(f)\rangle
    \cdot \prod_{g \in G} \psi(g) \prod_{i=1}^n \overline{\psi(f_i)}.
  \]
  In particular, if
  \[
    \prod_{i=1}^n f_i = \prod_{g \in G} g \pmod{G'},
  \]
  then
  \[
    \langle \hatS(\rho')^3, \rho'(f)\rangle = \langle \hatS(\rho)^3, \rho(f)\rangle.
  \]
\end{lemma}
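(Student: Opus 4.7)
The plan is to exploit the fact that a one-dimensional representation $\psi$ acts simply as a scalar on each factor, so twisting $\rho$ by $\psi$ essentially introduces a global scalar depending only on $\prod_i x_i \pmod{G'}$.

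First I would unpack the hypothesis: since $\psi$ is one-dimensional, $\rho'_i(g) = \rho_i(g)\,\psi(g)$ as operators, and therefore
\[
\rho'(x_1,\dots,x_n) \;=\; \rho(x_1,\dots,x_n) \cdot \prod_{i=1}^n \psi(x_i).
\]
Next I would compute $\hatS(\rho')$ by integrating over $G^n$. The key observation is that on the support $S$ of $1_S$ (i.e.\ bijective tuples), the product $\prod_i \psi(x_i)$ equals the constant $c := \prod_{g \in G} \psi(g)$, because the multiset $\{x_1,\dots,x_n\}$ is always equal to $G$. Hence the twist factors out as a scalar and
\[
\hatS(\rho') \;=\; c \cdot \hatS(\rho), \qquad \hatS(\rho')^{3} \;=\; c^{3}\,\hatS(\rho)^{3}.
\]
Similarly $\rho'(f) = \rho(f)\cdot \prod_i \psi(f_i)$, and using sesquilinearity of the Hilbert--Schmidt inner product $\langle A,B\rangle = \tr(AB^*)$ I get
\[
\langle \hatS(\rho')^{3},\,\rho'(f)\rangle \;=\; c^{3}\,\overline{\prod_{i=1}^{n}\psi(f_i)}\;\langle \hatS(\rho)^{3},\,\rho(f)\rangle.
\]

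The one subtle step, and the only place where anything beyond bookkeeping is required, is to show that $c^{3} = c$ (so that the prefactor matches the statement). For this I would note that $\psi$ factors through $G^{\ab}$, so $c = \psi\bigl(\prod_{g \in G} g\bigr)$, and in the abelian group $G^{\ab}$ the element $\prod_{g} g$ has order dividing~$2$, since the involution $g \mapsto g^{-1}$ pairs elements and shows $\prod g = (\prod g)^{-1}$. Therefore $c^{2} = 1$ and hence $c^{3} = c$, which yields the first claimed identity.

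Finally, for the second part I would invoke the hypothesis $\prod_i f_i \equiv \prod_{g \in G} g \pmod{G'}$, apply $\psi$ (which annihilates $G'$) to both sides, and conclude $\prod_i \psi(f_i) = c$. Combined with $c \in \{\pm 1\}$ from the previous step, the multiplicative factor becomes $c \cdot \overline{c} = |c|^{2} = 1$, and the two inner products are equal. I expect no serious obstacle here: the whole argument is a short manipulation, and the only non-trivial ingredient is the order-two observation about $\prod_{g} g$ in $G^{\ab}$.
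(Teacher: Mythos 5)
Your proposal is correct and follows essentially the same route as the paper: twist by the scalar $\prod_i\psi(x_i)$, observe it is constant equal to $c=\prod_{g\in G}\psi(g)$ on the support of $1_S$, use sesquilinearity to extract the conjugate on the $\rho'(f)$ side, and finally show $c^2=1$ to simplify $c^3$ to $c$. The only cosmetic difference is that the paper proves $c^2=1$ directly from $\prod_g\psi(g)=\prod_g\psi(g^{-1})=c^{-1}$, whereas you route the same observation through the statement that $\prod_g g$ has order dividing $2$ in $G^{\ab}$; these are the same argument.
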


\begin{proof}
  Note that $\rho' = \rho \hatotimes \psi^n$, where
  \[
    \psi^n = \overbrace{\psi \otimes \cdots \otimes \psi}^n
  \]
  is the one-dimensional representation of $G^n$ defined by
  \[
    \psi^n(g_1, \dots, g_n) = \prod_{i=1}^n \psi(g_i).
  \]
  Since $1_S$ is by definition supported on permutations of $G$, we thus have
  \[
    \hatS(\rho') = \left(\prod_{g \in G} \psi(g)\right)\,\hatS(\rho),
  \]
  and
  \[
    \rho'(f) = \left(\prod_{i=1}^n \psi(f_i)\right)\, \rho(f).
  \]
  Note that $\left(\prod_{g \in G} \psi(g)\right)^2 = 1$: indeed,
  \[
    \prod_{g \in G} \psi(g) = \prod_{g \in G} \psi(g^{-1}) = \Bigg(\prod_{g \in G} \psi(g) \Bigg)^{-1}.
  \]
  The lemma follows.
\end{proof}

Call a representation $\rho = \rho_1\otimes \cdots \otimes \rho_n$ of $G$ \emph{$m$-sparse} if exactly $m$ of the factors $\rho_i$ are nontrivial, i.e., if $|\supp \rho| = m$. The goal of this section is to estimate the total contribution from all $m$-sparse $\rho$ for $m \leq cn^{1/2}$, for some constant $c$. Define
\[
  M_{m, f} = \sum_{\rho \colon |\supp \rho| \leq m} \langle \hatS(\rho)^3, \rho(f) \rangle \dim \rho.
\]
Define also
\[
  \frakS_m(f) = \sum_{2k\leq m} \frac1{k!} \left( - \frac{\coll(f)}{n^2}\right)^k,
\]
noting that
\[
  |\frakS(f) - \frakS_m(f)| \leq \frac{1}{(\floor{m/2}+1)!}.
\]
We will prove the following proposition (the abelian case appeared previously, in a weaker form, as~\cite[Theorem~3.1]{moreon}).

\begin{proposition}\label{major-arcs}
For $m < 0.17 n^{1/2}$,
\[
  \left|M_{m, f} - \frakS_m(f) \pfrac{n!}{n^n}^3\right| \leq O(m^2/n) \pfrac{n!}{n^n}^3.
\]
Concretely, if $n > 10^5$ and $m \leq 20$,
\[
  \left| M_{m, f} - \frakS_m(f) \pfrac{n!}{n^n}^3 \right| < 0.32 \pfrac{n!}{n^n}^3.
\]
\end{proposition}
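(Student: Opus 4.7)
The strategy is to convert the Fourier-side truncation $|\supp \rho| \le m$ into a ``coordinate-side'' truncation and then to evaluate it combinatorially. By Lemma~\ref{lem:P_X-on-fourier}(ii) and Corollary~\ref{cor:proj-diag},
\[
  M_{m,f} = \sum_{X \subset \{1,\dots,n\},\, |X| \le m} (P_X 1_S)^{*3}(f).
\]
Since $P_X 1_S$ depends only on $x_X$, the convolution on $G^n$ collapses to a convolution on $G^X$, and via the inclusion--exclusion identity \eqref{PX-ie} everything further reduces to the quantities $Q_Y 1_S$ for $|Y| \le m$. A direct calculation gives
\[
  Q_Y 1_S(x) = \frac{(n-|Y|)!}{n^{n-|Y|}}\, 1_{S_Y}(x_Y),
\]
where $S_Y \subset G^Y$ denotes the set of injective maps $Y \to G$. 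After rearrangement, $M_{m,f}$ is an explicit signed sum, over small subsets $Y$, of the ``small'' instances $(1_{S_Y}^{*3})_{G^Y}(f_Y)$ of the same problem on $G^Y$.

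To evaluate $1_{S_Y}^{*3}(f_Y)$, I would apply M\"obius inversion on the partition lattice $\Pi_Y$, writing $1_{S_Y} = \sum_{\calP \in \Pi_Y} \mu(\calP)\, g_\calP$, where $g_\calP(x) = 1$ iff $x$ is constant on each block of $\calP$.  A short direct computation gives
\[
  g_{\calP_1} * g_{\calP_2} = n^{|\calP_1 \vee \calP_2|-|Y|}\, g_{\calP_1 \wedge \calP_2},
\]
and iterating shows that $(g_{\calP_1} * g_{\calP_2} * g_{\calP_3})(f_Y)$ is $n$ raised to an explicit exponent, times the indicator that $f_Y$ is constant on the blocks of $\calP_1 \wedge \calP_2 \wedge \calP_3$. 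Substituting back, $1_{S_Y}^{*3}(f_Y)$ becomes a M\"obius-weighted triple sum over $\Pi_Y^3$, depending on $f_Y$ only through its level-set partition on $Y$; this is the sense in which one reduces to complete character sums.

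The final step is to collect the contributions and identify those reproducing $\frakS_m(f)$.  The main terms come from ``near-discrete'' configurations: $Y = \emptyset$ yields the leading $(n!/n^n)^3$, while subsets $Y$ of size $2k$ which split into $k$ disjoint pairs each matching an equality $f_i = f_j$ contribute the $k$-th term $\frac{1}{k!}(-\coll(f)/n^2)^k$ of $\frakS_m(f)$, up to acceptable error. Summing over $0 \le k \le m/2$ reproduces $\frakS_m(f)\,(n!/n^n)^3$. The main obstacle --- and the main source of error --- is uniformly bounding the contributions of partitions containing a block of size $\ge 3$, or with ``extra'' pair-blocks, or with common refinement strictly coarser than the discrete partition of $Y$. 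These are handled by the M\"obius bound $|\mu(\calR)| \le \prod_r (|r|-1)!$, simple Stirling-type counts for partitions of a given shape, and careful tracking of the powers of $n$ appearing; the hypothesis $m \le n^{1/2}/4$ ensures that the dominant error is of order $m^2/n$, and tracking numerical constants through these estimates then yields the concrete $0.38$ bound for $n > 10^5$, $m \le 50$.
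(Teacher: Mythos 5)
Your opening steps --- expanding $M_{m,f}$ via argument projections, identifying $Q_Y 1_S = \frac{(n-|Y|)!}{n^{n-|Y|}} 1_{S_Y}$, and Möbius-inverting $1_{S_Y}$ over the partition lattice --- match the paper exactly. But your proposed closed form
\[
  g_{\calP_1} * g_{\calP_2} = n^{|\calP_1 \vee \calP_2|-|Y|}\, g_{\calP_1 \wedge \calP_2}
\]
is false, and with it the claim that $(c_{\calP_1} * c_{\calP_2} * c_{\calP_3})(f)$ is a power of $n$ times the $(\calP_1 \wedge \calP_2 \wedge \calP_3)$-measurability indicator. Take $Y = \{1,2,3,4\}$, $\calP_1 = \{\{1,2\},\{3,4\}\}$, $\calP_2 = \{\{1,3\},\{2,4\}\}$; then $\calP_1 \wedge \calP_2$ is discrete and your formula outputs the constant $n^{-3}$, whereas a direct count of solutions to $yz = x$ with $y$ $\calP_1$-constant and $z$ $\calP_2$-constant gives $(g_{\calP_1} * g_{\calP_2})(x) = n^{-3}\,[x_2 = x_1 x_3^{-1} x_4]$, which is not of the form $c_\calR$ for any partition $\calR$. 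The convolution does not stay within the class of $c$-type functions. This is exactly why the paper never attempts to evaluate it exactly: instead it only \emph{bounds} $c_{\calP_1} * c_{\calP_2} * c_{\calP_3}(f) \le n^{-\trank(\psystem)}$ via the triple rank (Lemma~\ref{lem:srank-conv-bound}), and separately controls the discrepancy $\gamma(\psystem,f) - \gamma_0(\psystem,f)$ between the projected and unprojected quantities by a nontrivial injective-encoding argument (Proposition~\ref{prop:gamma-prime}).

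Beyond this, the remaining ``careful tracking of powers of $n$'' elides the actual mechanism for the $O(m^2/n)$ error. The paper does not run a term-by-term estimate: it assembles the sum into a polynomial $M_{m,f}(z)$ in an auxiliary variable $z$ grading by the \emph{complexity} $\cx\psystem = \trank(\psystem) - |\supp\psystem|$, proves $|M_{m,f}(z)| = O\left(\left(n!/n^n\right)^3\right)$ on a disk of radius $\Theta(1/m^2)$ by bounding the generating functions $\alpha_m, \beta_m$ for associated Stirling numbers (Lemmas~\ref{lem:stat-phys} and~\ref{lem:beta-bound}, Proposition~\ref{prop:major-arcs-estimate}), and then applies Cauchy's theorem (Lemma~\ref{lem:cauchy-trick}) to get $M_{m,f}(1/n) = M_{m,f}(0) + O(m^2/n)(n!/n^n)^3$; the constant term $M_{m,f}(0)$ picks out exactly the pairing systems and is shown to equal $\frakS_m(f)(n!/n^n)^3$ up to $O(1/n)$. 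The concrete constant $0.38$ for $n > 10^5$, $m \le 50$ is obtained from explicit numerical bounds on these generating functions, not from the crude Möbius and Stirling-type counts you sketch. Without the triple-rank bound, the $\gamma$ versus $\gamma_0$ comparison, and the complex-analytic extraction, your plan cannot close.
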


The estimate~\eqref{major-arcs-estimate} follows immediately from this. The remainder of this section is concerned with the proof of this proposition.

To prove this proposition we will actually move away from the Fourier-analytic formalism (though we will return to it for the minor arcs), using arguments projections and purely ``physical-side'' (as opposed to frequency-side) arguments.\footnote{This fact suggests the interesting possibility that the results of this section may hold in greater generality than just that of groups. We intend to return to this consideration in future work.}

\subsection{Applying argument projections}%
\label{subsec:arg-proj-application}

By Lemma~\ref{lem:P_X-on-fourier},
\begin{equation} \label{eq:M_m-physical}
  M_{m, f} = \sum_{|X| \leq m} (P_X 1_S)^{*3}(f).
\end{equation}
Recall that, according to our convention that $L^2(G^X) \subset L^2(G^n)$, $P_X 1_S$ and $Q_X 1_S$ are identified with functions $X \to G$. Let $S_X \subset G^X$ denote the set of injective functions $X \to G$. Then
\begin{equation}
  \label{eq:qx-1s}
  Q_X 1_S = \frac{(n-|X|)!}{n^{n-|X|}} 1_{S_X} .
\end{equation}
Indeed, a function $f \colon X \to G$ can be extended to an injective function $\{1,\dots,n\} \to G$ in $(n-|X|)!$ ways if $f$ is injective and $0$ ways otherwise, and by definition $(Q_X 1_S)(f)$ is the number of these extensions normalized by $n^{-(n-|X|)}$. From this we can derive a formula for $P_X 1_S$. 

Given a partition $\calP$ of $X\subset \{1, \dots, n\}$, we say $f: X \to G$ is \emph{$\calP$-measurable} if $f$ is constant on each cell of $\calP$. Let $c_\calP$ be the indicator of $\calP$-measurability: thus
\[
  c_\calP(f) = \begin{cases}
    1 & \text{if $f$ is constant on each cell of $\calP$}, \\
    0 & \text{else}.
  \end{cases}
\]
By a further slight abuse of notation, we can consider a partition $\calP$ of $X \subset \{1,\dots,n\}$ to be a partition of the full set $\{1,\dots,n\}$, by giving each element of $\{1,\dots,n\} \setminus X$ its own singleton cell.  Moreover, we can think of two partitions $\calP$ and $\calQ$ on different subsets of $\{1,\dots,n\}$ as being identified if they give rise to the same partition of $\{1,\dots,n\}$ in this way: in other words, if they differ just by adding or deleting singletons.  Note that this hypothesis implies $c_\calP = c_\calQ$ as elements of $L^2(G^n)$, so this is compatible with our existing conventions.

We define the \emph{rank} of a partition $\calP$ of $X \subset \{1,\dots,n\}$ by $\rank \calP = |X| - |\calP|$. Again note that this quantity is invariant under adding or deleting singletons. Note that
\[
  \langle c_\calP, 1\rangle = n^{-\rank \calP}
\]
(since there are $n^{|\calP|}$ $\calP$-measurable functions $X \to G$).

The M\"obius inversion theory from Section~\ref{subsec:mobius} allows us to expand $S_X$ in terms of functions $c_\calP$.
\begin{lemma}%
  \label{mobius-inversion-1}
Let $S_X \subset G^X$ be the set of injective functions $X \to G$. Then
\[
  1_{S_X} = \sum_{\calP\in \Pi_X} \mu(\calP) c_\calP.
\]
\end{lemma}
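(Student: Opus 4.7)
The plan is to verify the identity pointwise at each $f \colon X \to G$. The key device is the \emph{kernel partition} $\calP_f$ of $X$, whose cells are the nonempty fibers of $f$; with this definition, $f$ is injective (i.e.\ $f \in S_X$) if and only if $\calP_f = \discrete$.

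Next I would observe that $c_\calP(f) = 1$ iff $f$ is constant on each cell of $\calP$, which by the definition of $\calP_f$ is equivalent to saying that each cell of $\calP$ is contained in some cell of $\calP_f$, i.e., $\calP \leq \calP_f$ in the refinement order of Section~\ref{subsec:mobius}. Therefore
\[
  \sum_{\calP \in \Pi_X} \mu(\calP)\, c_\calP(f) \;=\; \sum_{\discrete \leq \calP \leq \calP_f} \mu(\discrete, \calP).
\]
By the defining identity of the Möbius function on the interval $[\discrete, \calP_f]$, the right-hand side equals $\delta(\discrete, \calP_f)$: it is $1$ precisely when $\calP_f = \discrete$, and $0$ otherwise. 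Since $\calP_f = \discrete$ is exactly the condition $f \in S_X$, this matches $1_{S_X}(f)$, completing the verification.

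I do not expect a real obstacle: this is a direct application of Möbius inversion on the partition lattice, and the explicit product formula for $\mu(\calP)$ recorded in Section~\ref{subsec:mobius} is not actually needed here—only the abstract defining identity $\sum_{\calP \leq \calR \leq \calQ} \mu(\calP, \calR) = \delta(\calP, \calQ)$. The one point requiring attention is bookkeeping of the refinement convention (that $\calP \leq \calQ$ means $\calP$ refines $\calQ$), which fixes the direction $\calP \leq \calP_f$ in the equivalence $c_\calP(f) = 1 \iff \calP \leq \calP_f$.
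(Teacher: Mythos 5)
Your proof is correct. It is essentially the same argument as the paper's, just organized in the dual way: the paper introduces the auxiliary indicators $d_\calQ(f)$ (``$f$ is $\calQ$-measurable and injective across cells of $\calQ$''), observes the functional equation $c_\calP = \sum_{\calQ \geq \calP} d_\calQ$, inverts it to get $d_\calP = \sum_{\calQ \geq \calP} \mu(\calP,\calQ)\, c_\calQ$ for all $\calP$, and then specializes to $\calP = \discrete$ since $d_\discrete = 1_{S_X}$. You instead fix $f$, introduce the kernel partition $\calP_f$, note that $c_\calP(f) = 1$ iff $\calP \leq \calP_f$, and invoke the zeta--Möbius orthogonality $\sum_{\calP \leq \calP_f}\mu(\discrete,\calP) = \delta(\discrete, \calP_f)$ directly. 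The content is the same — indeed the key observation $c_\calP(f) = 1 \iff \calP \leq \calP_f$ is exactly what makes the paper's claimed identity $c_\calP = \sum_{\calQ \geq \calP} d_\calQ$ true (the unique $\calQ \geq \calP$ with $d_\calQ(f)=1$ is $\calQ = \calP_f$). Your pointwise version is slightly more self-contained since it avoids the auxiliary $d$ and makes that observation explicit, at the small cost of proving only the $\calP = \discrete$ case rather than the full inversion formula; for the lemma as stated this is a wash.
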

\begin{proof}
Let $d_\calP(f)$ be the indicator that $f$ is $\calP$-measurable and takes a distinct value on each cell of $\calP$. Then
\[
  c_\calP = \sum_{\calQ\colon \calP \leq \calQ} d_\calQ.
\]
Thus by M\"obius inversion we have
\[
  d_\calP = \sum_{\calQ\colon \calP \leq \calQ} \mu(\calP, \calQ) c_\calQ.
\]
The claimed formula is the case $\calP = \discrete$.
\end{proof}

Finally, denote by $\supp \calP$ the union of the nonsingleton cells of $\calP$.

\begin{remark}%
  \label{rem:px-kills-wrong-support}
  Note that $c_\calP$ only depends on variables $g_i$ for $i \in \supp \calP$; i.e., $c_\calP \in L^2(G^{\supp \calP})$.

  In particular, if $X \supsetneq \supp \calP$ is a proper superset then $P_X c_{\calP} = 0$ 
  (as $\im(P_X) \perp L^2(G^Y)$ for any $Y \subsetneq X$).
\end{remark}

\begin{lemma}%
  \label{lem:P_X-on-1_S}
If $X \subset \{1,\dots,n\}$ has size $m$, then
\[
  P_X 1_S = \frac{(n-m)!}{n^{n-m}} \sum_{\calP\colon \supp\calP = X} \mu(\calP) P_X c_\calP.
\]
\end{lemma}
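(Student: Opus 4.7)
The plan is to combine the Fourier-analytic description of $P_X$ with the M\"obius inversion formula of Lemma~\ref{mobius-inversion-1}, after first reducing $P_X 1_S$ to an expression involving $Q_X 1_S$.

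First I would observe that $P_X Q_X = P_X$. This is immediate from the defining product formulas $P_X = \prod_{i\notin X} E_i \prod_{i\in X}(1-E_i)$ and $Q_X = \prod_{i\notin X} E_i$, using that the $E_i$ commute and are idempotent; alternatively it is obvious from Lemma~\ref{lem:P_X-on-fourier}, since $Q_X$ kills Fourier coefficients whose support is not contained in $X$, while $P_X$ keeps only those with support exactly $X$. Combining this with \eqref{eq:qx-1s}, we obtain
\[
  P_X 1_S = P_X Q_X 1_S = \frac{(n-m)!}{n^{n-m}}\, P_X 1_{S_X}.
\]

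Next I would apply Lemma~\ref{mobius-inversion-1} to the set $X$, yielding
\[
  P_X 1_{S_X} = \sum_{\calP \in \Pi_X} \mu(\calP)\, P_X c_\calP.
\]
Finally I would invoke Remark~\ref{rem:px-kills-wrong-support}: each $\calP \in \Pi_X$ has $\supp \calP \subset X$, and if the inclusion is strict then $c_\calP \in L^2(G^{\supp \calP})$ with $\supp \calP \subsetneq X$, so $P_X c_\calP = 0$. Only partitions with $\supp \calP = X$ survive, giving the claimed formula.

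There is no real obstacle here; the only point requiring any care is the identity $P_X Q_X = P_X$, which is nevertheless essentially tautological once one identifies $P_X$ and $Q_X$ with their Fourier-analytic meanings.
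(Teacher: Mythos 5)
Your proof is correct and follows essentially the same route as the paper: write $P_X 1_S = P_X Q_X 1_S$, use \eqref{eq:qx-1s} to replace $Q_X 1_S$ by a scalar multiple of $1_{S_X}$, expand via Lemma~\ref{mobius-inversion-1}, and discard the partitions with $\supp\calP \subsetneq X$ by Remark~\ref{rem:px-kills-wrong-support}. The only extra content is your explicit justification of $P_X Q_X = P_X$, which the paper uses without comment.
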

\begin{proof}
By~\eqref{eq:qx-1s} and the previous lemma we have
\begin{align*}
  P_X 1_S
  &= P_X Q_X 1_S \\
  &= \frac{(n-m)!}{n^{n-m}} P_X 1_{S_X} \\
  &= \frac{(n-m)!}{n^{n-m}} \sum_{\calP \in \Pi_X} \mu(\calP) P_X c_\calP,
\end{align*}
  and by Remark~\ref{rem:px-kills-wrong-support} we may restrict the summation to those $\calP$ with $\supp \calP = X$.
\end{proof}

\subsection{Partition systems}%
\label{subsec:psystem}

A \emph{partition triple} on a set $X \subset \{1,\dots,n\}$ is simply a triple $\psystem = (\calP_1, \calP_2, \calP_3)$ of partitions of $X$.  By our usual convention of adding and deleting singletons, this also makes sense if $\calP_i$ are partitions of $\{1,\dots,n\}$ with $\supp \calP_i \subset X$.  The \emph{support} of $\psystem$ is defined to be
\[
  \supp \psystem = \supp \calP_1 \cup \supp \calP_2 \cup \supp \calP_3
\]
or in other words the smallest set $X \subset \{1,\dots,n\}$ such that $\psystem$ can be thought of as a partition triple on $X$ (up to adding or deleting singletons).

A partition triple is called a \emph{partition system} if the partitions all have the same support, i.e., if $\supp \calP_i = \supp \psystem$ for $i=1,2,3$.

Given~\eqref{eq:M_m-physical}, Lemma~\ref{lem:P_X-on-1_S} and Corollary~\ref{cor:proj-diag}, the task of proving Proposition~\ref{major-arcs} reduces to estimating
\begin{equation} \label{P_XcP-triple-conv}
  P_X c_{\calP_1} * P_X c_{\calP_2} * P_X c_{\calP_3} (f) = P_X(c_{\calP_1} * c_{\calP_2} * c_{\calP_3})(f)
\end{equation}
for each subset $X \subset \{1,\dots,n\}$ of size $\le m$ and each partition triple $\psystem=(\calP_1,\calP_2,\calP_3)$ on $X$, and aggregating the results.  By Remark~\ref{rem:px-kills-wrong-support}, the left-hand side is zero unless $\supp \calP_i = X$ for each $i=1,2,3$, so we may restrict attention to partition systems $\psystem$ with $\supp \psystem = X$.

For most partition systems it will suffice to bound~\eqref{P_XcP-triple-conv}. We will do this (in Proposition~\ref{prop:gamma-bound} below) by relating it to the simpler quantity
\[
  c_{\calP_1} * c_{\calP_2} * c_{\calP_3}(f),
\]
which we can estimate much more easily.

\def\trank{\opr{trank}}

\begin{lemma}%
  \label{lem:srank-conv-bound}
Define the \emph{triple rank} of a partition triple $\psystem = (\calP_1, \calP_2, \calP_3)$ by
\[
  \trank(\psystem) = \max_{\sigma \in S_3} \left(
  \rank(\calP_{\sigma(1)}) + \rank(\calP_{\sigma(2)} \vee \calP_{\sigma(3)})
  \right)
\]
  where $S_3$ denotes the symmetric group.  Then
\[
  0 \le c_{\calP_1} \ast c_{\calP_2} \ast c_{\calP_3}(f) \leq n^{-\trank(\psystem)}.
\]
\end{lemma}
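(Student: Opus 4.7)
The plan is to rewrite the triple convolution as a weighted count of group-theoretic triples and bound the count in three ways (one for each choice of which $\calP_k$ is singled out). Noting that each $c_{\calP_k}$ depends only on coordinates in $X = \supp \psystem$, the convolution formula on $G^n$ collapses to
\[
  c_{\calP_1} \ast c_{\calP_2} \ast c_{\calP_3}(f) = n^{-2m} \cdot N,
\]
where $m = |X|$ and $N = \#\{(h_1, h_2, h_3) \in H_{\calP_1} \times H_{\calP_2} \times H_{\calP_3} : h_1 h_2 h_3 = f|_X\}$; here $H_\calP = \{h \in G^X : h \text{ is } \calP\text{-measurable}\}$ is a subgroup of $G^X$ of order $n^{|\calP|}$. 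Non-negativity is immediate from $c_{\calP_k} \ge 0$. For the upper bound it suffices to show that for each $\sigma \in S_3$,
\[
  N \leq n^{|\calP_{\sigma(1)}|} \cdot n^{|\calP_{\sigma(2)} \vee \calP_{\sigma(3)}|},
\]
since dividing by $n^{2m}$ and using $|\calP_k| = m - \rank \calP_k$ (all $\calP_k$ have support exactly $X$) gives the bound $n^{-\rank \calP_{\sigma(1)} - \rank(\calP_{\sigma(2)} \vee \calP_{\sigma(3)})}$, whose minimum over $\sigma$ is $n^{-\trank(\psystem)}$.

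For the ``endpoint'' cases $\sigma(1) \in \{1, 3\}$, the bound follows from a direct coset-intersection argument. Take $\sigma = \textup{id}$ and substitute $\ell = h_1 h_2$: for each fixed $h_1 \in H_{\calP_1}$, the allowed $\ell$ lie in the intersection $(h_1 H_{\calP_2}) \cap (f H_{\calP_3})$ of two left cosets, which is either empty or a translate of $H_{\calP_2} \cap H_{\calP_3} = H_{\calP_2 \vee \calP_3}$ and hence has at most $n^{|\calP_2 \vee \calP_3|}$ elements. Summing over $h_1$ yields $N \leq n^{|\calP_1|} \cdot n^{|\calP_2 \vee \calP_3|}$, and the case $\sigma(1) = 3$ is completely analogous (parametrize by $h_3$ and $\ell = h_1 h_2 = f h_3^{-1}$).

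The ``middle'' case $\sigma(1) = 2$ is the chief obstacle. Fixing $h_2 \in H_{\calP_2}$, the remaining constraint $h_2^{-1} h_1^{-1} f \in H_{\calP_3}$ places $h_1$ in $H_{\calP_1} \cap f H_{\calP_3} h_2^{-1}$; translating by any fixed element of this set (if it is nonempty) identifies it with $H_{\calP_1} \cap h_2 H_{\calP_3} h_2^{-1}$, the intersection of $H_{\calP_1}$ with a \emph{conjugate} of $H_{\calP_3}$. To bound the latter, observe that $g \in H_{\calP_1} \cap h_2 H_{\calP_3} h_2^{-1}$ satisfies (i) $g_i = g_j$ whenever $i, j$ lie in the same cell of $\calP_1$, and (ii) $g_j = (h_{2,j} h_{2,i}^{-1})\, g_i \,(h_{2,j} h_{2,i}^{-1})^{-1}$ whenever $i, j$ lie in the same cell of $\calP_3$. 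Both kinds of constraint determine $g_j$ bijectively from $g_i$, so within each connected component of the resulting constraint graph on $X$ --- which is precisely a cell of $\calP_1 \vee \calP_3$ --- the function $g$ is determined by its value at a single point. Hence $|H_{\calP_1} \cap h_2 H_{\calP_3} h_2^{-1}| \leq n^{|\calP_1 \vee \calP_3|}$ (possibly less, due to cycle-consistency constraints around loops in this graph), and summing over $h_2$ gives $N \leq n^{|\calP_2|} \cdot n^{|\calP_1 \vee \calP_3|}$. Combining all three bounds completes the argument. The point that makes the middle case work is that conjugation by $h_2$ alters the constraints from equalities to conjugacies but not the underlying incidence structure on $X$, so the join $\calP_1 \vee \calP_3$ still controls the number of free parameters.
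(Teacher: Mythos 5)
Your proof is correct, but it takes a genuinely different route from the paper's. The paper proves all three permuted bounds by a single uniform argument: fix $h_{\sigma(1)}$ arbitrarily, fix the value of $h_{\sigma(2)}$ at one representative $y$ of each cell of $\calP_{\sigma(2)}\vee\calP_{\sigma(3)}$, and then show that this forces $h_{\sigma(2)}$ and $h_{\sigma(3)}$ everywhere by propagation --- at any index, two of the three entries of $h_1h_2h_3 = f$ determine the third, and ``being determined'' spreads across cells of $\calP_{\sigma(2)}$ and of $\calP_{\sigma(3)}$ by measurability, hence reaches every index within each cell of the join. The authors explicitly point out (in a footnote) that this argument uses only the Latin-square property of multiplication on $G$, not the full group structure, a fact they care about because they speculate the major-arc machinery may generalize beyond groups. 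Your proof, by contrast, splits on the position of the singled-out factor: the two endpoint cases fall out directly from the coset-algebra identity $H_{\calP_2}\cap H_{\calP_3} = H_{\calP_2\vee\calP_3}$, while the middle case uses the conjugate $h_2 H_{\calP_3} h_2^{-1}$ and a constraint-graph count. Your middle-case analysis is essentially the paper's propagation in disguise --- conjugation turns equality constraints into fixed conjugacy constraints but preserves the incidence structure on $X$, so the number of free parameters is still the number of cells of $\calP_1\vee\calP_3$ --- so the two approaches converge there. The trade-off is clear: your endpoint cases are shorter and more algebraic, but the whole argument now leans on subgroups, cosets, and conjugation rather than on Latin squares alone, so it is more firmly tied to $G$ being a group; the paper's version buys generality at the cost of a slightly more elaborate (though symmetric) propagation.
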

\begin{proof}
By definition, $c_{\calP_1} * c_{\calP_2} * c_{\calP_3}(f)$ is the number of solutions to $h_1 h_2 h_3 = f$ over $\calP_i$-measurable $h_i$ ($i=1,2,3$) normalized by $n^{-2n}$. Our claim is that the number of such solutions is bounded by $n^{|\calP_{\sigma(1)}| + |\calP_{\sigma(2)} \vee \calP_{\sigma(3)}|}$ for each permutation $\sigma \in S_3$. There are in total $n^{|\calP_{\sigma(1)}|}$ choices of $\calP_{\sigma(1)}$-measurable $h_{\sigma(1)}$, so it suffices to show, given $f$ and $h_{\sigma(1)}$, that there are at most $n^{\rank(\calP_{\sigma(2)} \vee \calP_{\sigma(3)})}$ choices of $h_{\sigma(2)}$ and $h_{\sigma(3)}$ such that $h_1 h_2 h_3 = f$.

Fix a set $Y \subset \{1, \dots, n\}$ consisting of one element from each cell of $\calP_{\sigma(2)} \vee \calP_{\sigma(3)}$, and fix a choice of $h_{\sigma(2)}(y)$ for each $y \in Y$. There are $n^{|\calP_{\sigma(2)} \vee \calP_{\sigma(3)}|}$ such choices. It suffices to show that each such choice can be extended to at most one valid choice of $h_1,h_2,h_3$.

  Note that, for any $x \in \{1, \dots, n\}$, if one of $h_{\sigma(2)}(x)$ or $h_{\sigma(3)}(x)$ is determined then so is the other, since if  $a_1,a_2,a_3,b \in G$ are a solution to $a_1 a_2 a_3 = b$ and $b$ is fixed then any two of $a_1,a_2,a_3$ uniquely determine the third.\footnote{Note this only uses the Latin square property of group multiplication, rather than the full power of $G$ being a group.}

Let $Y' \supset Y$ be the set of indices $y$ such that one, or equivalently both, of the values $h_{\sigma(2)}(y)$, $h_{\sigma(3)}(y)$ is uniquely determined by our choices so far. It is clear that if $y \in Y'$ and $x,y$ are in the same cell of $\calP_{\sigma(2)}$ then $x \in Y'$ (as $h_{\sigma(2)}(x) = h_{\sigma(2)}(y)$, as $h_{\sigma(2)}$ is $\calP_{\sigma(2)}$-measurable) and similarly for $\calP_{\sigma(3)}$.  Hence $Y'$ is both $\calP_{\sigma(2)}$- and $\calP_{\sigma(3)}$-measurable, and contains a point of each cell of $\calP_{\sigma(2)} \vee \calP_{\sigma(3)}$, so $Y' = \{1, \dots, n\}$ as required.
\end{proof}

\begin{remark}%
  \label{remark:srank-lower-bound}
Note that $\rank(\calP) \geq |\supp\calP|/2$, with equality if and only if $\calP$ is a \emph{pairing}: a partition of a set $X$ of even order into $|X|/2$ pairs. Thus $\trank(\psystem) \geq |\supp\psystem|$, with equality if and only if $\psystem = (\calP, \calP, \calP)$ for some pairing $\calP$.
\end{remark}

\def\lrank{\opr{lrank}}

Next we introduce a further notion of rank of a partition triple $\psystem$ which is weaker than triple rank $\trank(\psystem)$ defined above, but which is occasionally more convenient.

\begin{lemma}%
  \label{lem:lrank-srank-inequality}
  For a partition triple $\psystem = (\calP_1, \calP_2, \calP_3)$, define the \emph{lower rank} by
\[
\lrank(\psystem) = \frac12 \big( \rank(\calP_1) + \rank(\calP_2) + \rank(\calP_3) + \rank(\calP_1 \vee \calP_2 \vee \calP_3) \big).
\]
Then
\[
  \trank(\psystem) \ge \lrank(\psystem).
\]
\end{lemma}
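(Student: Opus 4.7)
The plan is to show that the average of the three quantities $a_i := \rank(\calP_i) + \rank(\calP_j \vee \calP_k)$ (for $\{i,j,k\}=\{1,2,3\}$) is already at least $\lrank(\psystem)$, which is stronger than needed since $\trank(\psystem) = \max_i a_i \geq \frac{1}{3}(a_1+a_2+a_3)$. Write $r_S := \rank(\bigvee_{i\in S}\calP_i)$ for $S\subseteq\{1,2,3\}$, so that $a_i = r_{\{i\}} + r_{\{1,2,3\}\setminus\{i\}}$ and $\lrank(\psystem) = \frac12(r_1+r_2+r_3+r_{123})$.

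The key ingredient is the semimodularity of the partition lattice: for any two partitions $\calQ,\calR$ of $\{1,\dots,n\}$,
\[
  \rank(\calQ \vee \calR) + \rank(\calQ \wedge \calR) \leq \rank(\calQ) + \rank(\calR).
\]
First I would apply this with $\calQ = \calP_i \vee \calP_j$ and $\calR = \calP_i \vee \calP_k$. The join equals $\calP_1 \vee \calP_2 \vee \calP_3$, while the meet is a common coarsening of $\calP_i$ (since $\calP_i$ refines both $\calQ$ and $\calR$); thus $\rank(\calQ \wedge \calR) \geq r_i$. Semimodularity therefore yields
\[
  r_{123} + r_i \leq r_{ij} + r_{ik}
\]
for each choice of distinguished index $i$.

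Summing the three instances of this inequality gives
\[
  3\,r_{123} + (r_1+r_2+r_3) \leq 2(r_{12}+r_{13}+r_{23}),
\]
and hence
\[
  a_1 + a_2 + a_3 = (r_1+r_2+r_3) + (r_{12}+r_{13}+r_{23}) \geq \tfrac{3}{2}\bigl(r_1+r_2+r_3+r_{123}\bigr) = 3\,\lrank(\psystem).
\]
Taking $\max$ on the left then finishes the argument. The main (modest) obstacle is verifying that the meet $(\calP_i \vee \calP_j) \wedge (\calP_i \vee \calP_k)$ in the partition lattice is a coarsening of $\calP_i$; once this is noted, semimodularity does all the work, and no property of the partition lattice beyond being a geometric (hence semimodular) lattice is used.
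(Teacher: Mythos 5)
Your proof is correct and rests on exactly the same key ingredient as the paper's: submodularity of rank applied to $\calP_i\vee\calP_j$ and $\calP_i\vee\calP_k$, combined with the observation that $\calP_i$ refines the meet. The only difference is cosmetic — the paper lower bounds $\trank(\psystem)$ by the average of just \emph{two} of the quantities $a_i$ and applies submodularity once, whereas you symmetrize over all three $a_i$ and apply submodularity three times before summing; both routes yield the same inequality $r_{123}+r_i\le r_{ij}+r_{ik}$ and deliver the bound.
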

\begin{proof}
It is immediate from the definition of $\trank(\psystem)$ that
\[
    \trank(\psystem) \ge \frac12 \big(\rank(\calP_1) + \rank(\calP_2 \vee \calP_3) \big)
    + \frac12 \big(\rank(\calP_2) + \rank(\calP_1 \vee \calP_3) \big).
\]
The result follows from this and the submodularity property of rank,
\[
  \rank(\calP \vee \calQ) + \rank(\calP \wedge \calQ) \le \rank(\calP) + \rank(\calQ),
\]
applied to $\calP_2 \vee \calP_3$ and $\calP_1 \vee \calP_3$, and the fact that $\calP_3 \le (\calP_2 \vee \calP_3) \wedge (\calP_1 \vee \calP_3)$.\footnote{To see submodularity, consider a set of ``marriages'' that produces $\calQ$ from $\calP \wedge \calQ$. Applying these to $\calP$ produces $\calP \vee \calQ$. Thus $\rank(\calP \vee \calQ) - \rank(\calP) \leq \rank(\calQ) - \rank(\calP \wedge \calQ)$.}
\end{proof}

Finally, the \emph{complexity} of a partition system $\psystem$ is defined by
\[
  \cx{\psystem} = \trank(\psystem) - |\supp\psystem|.
\]
Note that by Remark~\ref{remark:srank-lower-bound}, $\cx{\psystem} \geq 0$ with equality if and only if $\psystem = (\calP, \calP, \calP)$ for some pairing $\calP$.

We call $\psystem = (\calP_1, \calP_2, \calP_3)$ \emph{connected} if $\calP_1 \vee \calP_2 \vee \calP_3$ is the indiscrete partition $\{\supp \psystem\}$.
In general, the \emph{connected components} of $\psystem$ are its restrictions to each cell of $\calP_1 \vee \calP_2 \vee \calP_3$.

\subsection{The quantities \texorpdfstring{$\gamma$}{gamma} and \texorpdfstring{$\gamma_0$}{gamma0}}

For any $f \colon \{1,\dots,n\} \to G$ and any partition triple $\psystem$, define normalized quantities
\[
  \gamma_0(\psystem, f) = n^{\trank(\psystem)} c_{\calP_1} * c_{\calP_2} * c_{\calP_3}(f)
\]
and
\begin{align*}
  \gamma(\psystem, f) &= n^{\trank(\psystem)} P_X(c_{\calP_1} * c_{\calP_2} * c_{\calP_3})(f) \\
  &= n^{\trank(\psystem)} \big(P_X(c_{\calP_1}) * P_X(c_{\calP_2}) * P_X(c_{\calP_3})\big)(f)
\end{align*}
where $X = \supp \psystem$.
As observed above (using Remark~\ref{rem:px-kills-wrong-support}) $\gamma(\psystem,f) = 0$ unless $\psystem$ is a partition system.
In this notation, Lemma~\ref{lem:srank-conv-bound} asserts that
\[
  \gamma_0(\psystem, f) \in [0, 1].
\]
In the rest of this subsection we show that $\gamma(\psystem, f) \approx \gamma_0(\psystem, f)$ for pairings and in general $\gamma(\psystem, f)$ is not too large.

\begin{lemma}%
  \label{lem:gamma-mult}
  Suppose $\psystem_1 = (\calP_1, \calP_2, \calP_3)$ and $\psystem_2 = (\calP'_1, \calP'_2, \calP'_3)$ are two partition triples such that $\supp \psystem_1$ and $\supp \psystem_2$ are disjoint, and we define
  \[
    \psystem = (\calP_1 \vee \calP'_1, \calP_2 \vee \calP'_2, \calP_3 \vee \calP'_3)
  \]
  to be their union in a natural sense.  Then
  \[
    \gamma_0(\psystem, f) = \gamma_0(\psystem_1, f) \gamma_0(\psystem_2, f)
  \]
  and
  \[
    \gamma(\psystem, f) = \gamma(\psystem_1, f) \gamma(\psystem_2, f).
  \]
\end{lemma}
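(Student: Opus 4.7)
The plan is to reduce both identities to the observation that every ingredient in the definitions of $\gamma$ and $\gamma_0$ decomposes multiplicatively across the disjoint-support splitting $G^n \cong G^{X_1} \times G^{X_2} \times G^{X^c}$, where $X_j = \supp \psystem_j$ and $X = X_1 \cup X_2$.

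First I would establish the pointwise identity $c_{\calP_i \vee \calP'_i} = c_{\calP_i} \cdot c_{\calP'_i}$. The non-singleton cells of the join are exactly the non-singleton cells of $\calP_i$ (all in $X_1$) together with those of $\calP'_i$ (all in $X_2$), so a function is measurable for the join iff it is separately measurable for each piece. Next, from $P_X = \prod_{i \notin X} E_i \prod_{i \in X}(1 - E_i)$ and the fact that $E_i$ acts as the identity on functions independent of the $i$-th coordinate, one checks that $P_{X_1 \cup X_2}(F_1 F_2) = P_{X_1}(F_1) \cdot P_{X_2}(F_2)$ whenever $F_j \in L^2(G^{X_j})$; applying this with $F_1 = c_{\calP_i}$, $F_2 = c_{\calP'_i}$ gives $P_X c_{\calP_i \vee \calP'_i} = P_{X_1} c_{\calP_i} \cdot P_{X_2} c_{\calP'_i}$.

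The next step is to factor the triple convolution. Because $G^n$ is the direct product of $G^{X_1}$, $G^{X_2}$, $G^{X^c}$ compatibly with multiplication and with our normalized Haar measure, any pair of functions factoring through the splitting convolve factor-wise: $(F^{(1)} F^{(2)}) * (G^{(1)} G^{(2)}) = (F^{(1)} * G^{(1)})(F^{(2)} * G^{(2)})$ for $F^{(j)}, G^{(j)} \in L^2(G^{X_j})$, with the trivial $X^c$-factor $1 * 1 * 1 = 1$ contributing no $n$-dependence. Iterating gives the analogue for triple convolutions, and combining with the factorizations above shows that the triple convolutions underlying $\gamma(\psystem, f)$ and $\gamma_0(\psystem, f)$ each split into the product of the analogous triple convolutions for $\psystem_1$ and $\psystem_2$, evaluated at $f$.

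It remains to verify that the normalization $n^{\trank(\psystem)}$ matches $n^{\trank(\psystem_1) + \trank(\psystem_2)}$. Disjoint supports make rank additive under joins: $\rank(\calQ \vee \calQ') = \rank \calQ + \rank \calQ'$ whenever $\supp \calQ \subset X_1$ and $\supp \calQ' \subset X_2$, and applying this termwise shows that each $\sigma$-summand inside
\[
\trank(\psystem) = \max_{\sigma \in S_3}\bigl[\rank(\calP_{\sigma(1)} \vee \calP'_{\sigma(1)}) + \rank((\calP_{\sigma(2)} \vee \calP'_{\sigma(2)}) \vee (\calP_{\sigma(3)} \vee \calP'_{\sigma(3)}))\bigr]
\]
splits as the sum of the corresponding $\sigma$-summands for $\psystem_1$ and $\psystem_2$. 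The main obstacle I anticipate is justifying that the maxima can be taken compatibly, i.e.\ that a single $\sigma \in S_3$ achieves the maximum simultaneously in both triples, so that $\max$ commutes with the additive splitting; this is the step that appears to require either a structural symmetry specific to the partition triples at hand or a reformulation via the additive surrogate $\lrank$ from Lemma~\ref{lem:lrank-srank-inequality}.
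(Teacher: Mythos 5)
Your reduction to three multiplicative ingredients --- $c_{\calP_i \vee \calP'_i} = c_{\calP_i}\,c_{\calP'_i}$, factorization of the triple convolution across the splitting $G^n \cong G^{X_1} \times G^{X_2} \times G^{X^c}$, and the tensor identity $P_{X_1 \cup X_2}(F_1 F_2) = P_{X_1}(F_1)\, P_{X_2}(F_2)$ --- is exactly the paper's argument, step for step. Where you and the paper then diverge is on the normalizations: the paper silently concludes ``the claim for $\gamma_0$ follows'' and ``this proves the claim for $\gamma$'' without saying anything about why $n^{\trank(\psystem)} = n^{\trank(\psystem_1)}\, n^{\trank(\psystem_2)}$, whereas you explicitly flag this as the step that might fail because the maximizing $\sigma \in S_3$ need not agree between the two triples.

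Your concern is not just legitimate; it is fatal to the lemma as printed. The quantity $\trank$ is genuinely not additive over disjoint supports. Take $\psystem_1$ on $\{1,2,3,4\}$ with $\calP_1 = \{\{1,2,3,4\}\}$, $\calP_2 = \{\{1,2\},\{3,4\}\}$, $\calP_3 = \{\{1,3\},\{2,4\}\}$, and let $\psystem_2$ on $\{5,6,7,8\}$ be the same triple with the factor labels cyclically permuted so that $\calP'_3 = \{\{5,6,7,8\}\}$ is the coarse one. Each of $\psystem_1, \psystem_2$ is a connected partition system with $\trank = 6$, but the maximum is attained only at $\sigma(1)=1$ in the first and only at $\sigma(1)=3$ in the second; for the union one computes $\trank(\psystem) = 11 < 12$, and correspondingly $\gamma_0(\psystem, f) = n^{-1}\,\gamma_0(\psystem_1, f)\,\gamma_0(\psystem_2, f)$, contradicting the stated identity. (Since both are partition systems, the $\gamma$ identity fails too.) The additive surrogate $\lrank$ that you propose really is additive --- each of $\rank(\calP_1)$, $\rank(\calP_2)$, $\rank(\calP_3)$ and $\rank(\calP_1 \vee \calP_2 \vee \calP_3)$ splits across disjoint supports, and there is no $\max$ to contend with --- so replacing $\trank$ by $\lrank$ in the definitions of $\gamma_0$, $\gamma$ and $\cx$ would repair the lemma (one can check that $\gamma_0 \in [0,1]$, $\cx \geq 0$, and the bounds of Section~\ref{sec:major-arcs} survive this change, since $\lrank \leq \trank$). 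As written, though, the lemma is false, and so is the claim $\cx\psystem = e_1 \cx\psystem_1 + \cdots + e_k \cx\psystem_k$ in the proof of Theorem~\ref{thm:asymptotic-expansion}; only the case $C \leq 1$ of that theorem (and hence Theorem~\ref{main-theorem-hp-inv-version} and Corollary~\ref{C2k-corollary}) is unaffected, because for the partition systems arising there the maximizing $\sigma$ can always be chosen compatibly. So the gap you identified is a gap in the paper, not in your understanding.
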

In other words, $\gamma$ and $\gamma_0$ are multiplicative over connected components.
\begin{proof}
  It is clear that
  \[
    c_{\calP_i \vee \calP'_i}(f) = c_{\calP_i}(f) \cdot c_{\calP'_i}(f)
  \]
  for each $i \in \{1, 2, 3\}$, so
  \[
    c_{\calP_1 \vee \calP'_1} * c_{\calP_2 \vee \calP'_2} * c_{\calP_3 \vee \calP'_3}(f) = c_{\calP_1} * c_{\calP_2} * c_{\calP_3}(f) \cdot c_{\calP'_1} * c_{\calP'_2} * c_{\calP'_3}(f).
  \]
  The claim for $\gamma_0$ follows.

  Let $X = \supp \psystem$, $X_1 = \supp \psystem_1$, $X_2 = \supp \psystem_2$. Then $X$ is the disjoint union of $X_1$ and $X_2$, and for any function $F\colon G^X \to \C$ that factors as $F = F_1 \cdot F_2$ for $F_1\colon G^{X_1} \to \C$ and $F_2\colon G^{X_2} \to \C$ we have\footnote{In other words, $P_X = P_{X_1} \otimes P_{X_2}$.}
  \[
    P_X F = P_{X_1} F_1 \cdot P_{X_2} F_2.
  \]
  Thus
  \[
    P_X c_{\calP_1 \vee \calP'_1} * c_{\calP_2 \vee \calP'_2} * c_{\calP_3 \vee \calP'_3}(f) = P_{X_1}(c_{\calP_1} * c_{\calP_2} * c_{\calP_3})(f|_{X_1}) P_{X_2}(c_{\calP'_1} * c_{\calP'_2} * c_{\calP'_3})(f|_{X_2}).
  \]
  This proves the claim for $\gamma$.
\end{proof}

\begin{lemma}%
\label{lem:gamma-pairing}
Let $\psystem = (\calP, \calP, \calP)$ for some pairing $\calP$ with $|\supp \calP| = 2k$. Then
\[
    \gamma(\psystem, f) = (1 - 1/n)^\ell (-1/n)^{k-\ell},
\]
where $\ell$ is the number of cells of $\calP$ on which $f$ is constant.
In particular $|\gamma(\psystem, f)| \leq 1$ and
\[
  |\gamma(\psystem, f) - c_\calP(f)| \leq k/n.
\]
\end{lemma}
\begin{proof}
  Let $\calP = \{p_1, \dots, p_k\}$. In general, by Lemma~\ref{lem:gamma-mult}, $\gamma$ is multiplicative over connected components; in this case, this means
\[
  \gamma(\psystem, f) = \prod_{i=1}^k \gamma\big((\{p_i\},\{p_i\},\{p_i\}), f|_{p_i}\big).
\]
Hence it suffices to check the case $k=1$. Suppose $\calP = \{X\}$, where $|X|=2$. Then
\begin{align*}
  \gamma(\psystem, f)
  &= n^2 P_X(c_\calP^{*3}(f)) \\
  &= n^2 (c_\calP^{*3}(f) - n^{-3}) \\
  &= c_\calP(f) - 1/n.
\end{align*}
This proves the lemma.
\end{proof}

In the next proposition we will use the following notation. Given two partitions $\calQ \le \calR$, we write $\rank(\calR / \calQ)$ to denote $\rank(\calR) - \rank(\calQ)$.  This can be thought of as the number of marriages that have to be applied to the finer partition $\calQ$ to obtain the coarser partition $\calR$. Additionally, given a partition $\calP$ on a set $X$ and a subset $Y \subset X$, define
\[
  \calP \cap Y = \{ p \cap Y : p \in \calP, p \cap Y \neq \emptyset\},
\]
a partition of $Y$, and given a partition triple $\psystem = (\calP_1, \calP_2, \calP_3)$ on $X$ define
\[
  \psystem_Y = (\calP_1 \cap Y, \calP_2 \cap Y, \calP_3 \cap Y),
\]
which is a partition triple on $Y$ (but the restriction of a partition system is not generally a partition system).

\begin{proposition}%
  \label{prop:gamma-bound}
    Let $\psystem$ be a partition system. Then
    $|\gamma(\psystem, f)| \leq 2^{m'}$, where $m'$ is the number of points of $\supp \psystem$ contained in cells of $\calP_1 \vee \calP_2 \vee \calP_3$ of size at least 3.
\end{proposition}
\begin{proof}
    Let $m = |X|$. First we prove $|\gamma(\psystem, f)| \leq 2^m$.
    From~\eqref{PX-ie} and Corollary~\ref{cor:proj-diag} we have
    \begin{align*}
      P_X(c_{\calP_1} * c_{\calP_2} * c_{\calP_3})(f)
      &= \sum_{Y \subset X} (-1)^{|X|-|Y|} Q_Y (c_{\calP_1} * c_{\calP_2} * c_{\calP_3})(f) \\
      &= \sum_{Y \subset X} (-1)^{|X|-|Y|} (Q_Y c_{\calP_1} * Q_Y c_{\calP_2} * Q_Y c_{\calP_3})(f).
    \end{align*}
    Note that
    \[
      Q_Y c_\calQ = n^{-\rank(\calQ / \calQ \cap Y)} c_{\calQ \cap Y}:
    \]
    indeed, $n^{-\rank(\calQ / \calQ \cap Y)}$ represents the probability that a random function 
    \[
      f \colon \{1,\dots,n\} \to G
    \]
    is $\calQ$-measurable, conditioned on the weaker assumption that $f|_Y \colon Y \to G$ is ${(\calQ \cap Y)}$-measurable.
    Thus, normalizing,
    \begin{equation} \label{eq:gammaprime-sum}
      \gamma(\psystem, f) = \sum_{Y \subset X} (-1)^{|X|-|Y|} \gamma_0(\psystem_Y, f) n^{-t(\psystem, Y)}
    \end{equation}
    where
    \[
      t(\psystem, Y) = \trank(\psystem_Y) - \trank(\psystem) + \sum_{i=1}^3 \rank(\calP_i / \calP_i \cap Y).
    \]
    Since $\gamma_0(\psystem_Y, f) \in [0, 1]$, it suffices to prove that
    \[
        t(\psystem, Y) \geq 0
    \]
    for every $Y \subset X$.
    Assume without loss of generality that $\trank(\psystem) = \rank(\calP_1 \vee \calP_2) + \rank(\calP_3)$.
    Then $\trank(\psystem_Y) \geq \rank((\calP_1 \cap Y) \vee (\calP_2 \cap Y)) + \rank(\calP_3 \cap Y)$, so
    \[
        t(\psystem, Y) \geq - \rank((\calP_1 \vee \calP_2) / (\calP_1 \cap Y) \vee (\calP_2 \cap Y)) + \sum_{i=1}^2 \rank(\calP_i / \calP_i \cap Y).
    \]
    Hence it suffices to prove
    \[
        \rank(\calP_1 \vee \calP_2 / (\calP_1 \cap Y) \vee (\calP_2 \cap Y)) \leq \rank(\calP_1 / \calP_1 \cap Y) + \rank(\calP_2 / \calP_2 \cap Y).
    \]
    More generally, if $\calP_i' \leq \calP_i$ for $i = 1, 2$ then
    \[
        \rank(\calP_1 \vee \calP_2 / \calP'_1 \vee \calP'_2) \leq \rank(\calP_1 / \calP'_1) + \rank(\calP_2 / \calP'_2).
    \]
    To see this, consider a set of marriages that produces $\calP_1$ from $\calP'_1$ and a set of marriages that produces $\calP_2$ from $\calP'_2$, and apply both to $\calP_1' \vee \calP'_2$: the result is $\calP_1 \vee \calP_2$.
    This proves the upper bound $|\gamma(\psystem, f)| \leq 2^m$.
    
    Now we deduce $|\gamma(\psystem, f)| \leq 2^{m'}$.
    As in the previous lemma we may assume $\psystem$ is connected.
    If $m = 2$ then $m'=0$ and we are done by the previous lemma.
    If $m > 2$ then $m'=m$ and we are done by argument above.
\end{proof}

\subsection{The \texorpdfstring{$M_{m,f}(z)$}{Mmf(z)} series}

For a partition triple $\psystem = (\calP_1, \calP_2, \calP_3)$ we use the shorthand
\[
  \mu(\psystem) = \mu(\calP_1)\, \mu(\calP_2)\, \mu(\calP_3).
\]
From~\eqref{eq:M_m-physical} and Lemma~\ref{lem:P_X-on-1_S} we have\footnote{Here, and elsewhere, $(n)_m$ denotes the falling factorial $n(n-1)\dots(n-m+1)$.}
\begin{equation}\label{eq:M_m-psystems}
  M_{m, f} = \pfrac{n!}{n^n}^3 \sum_{|\supp \psystem| \leq m} \pfrac{n^{|\supp \psystem|}}{(n)_{|\supp \psystem|}}^3 \mu(\psystem) \gamma(\psystem, f) n^{-\trank(\psystem)}
\end{equation}
where the sum is over all partition systems on $\{1,\dots,n\}$.
For $z\in\C$ define
\[
  M_{m, f}(z) = \pfrac{n!}{n^n}^3 \sum_{|\supp \psystem| \leq m} \pfrac{n^{|\supp \psystem|}}{(n)_{|\supp \psystem|}}^3 \mu(\psystem) \gamma(\psystem, f) n^{-|\supp\psystem|} z^{\cx \psystem}.
\]
By design, $M_{m, f} = M_{m,f}(1/n)$. We can now summarize the rest of the proof of Proposition~\ref{major-arcs}. As we have seen (Remark~\ref{remark:srank-lower-bound}), $\trank(\psystem) \geq |\supp \psystem|$ for every partition system $\psystem$, so $M_{m,f}(z)$ is a polynomial. In this subsection we will show that $|M_{m, f}(z)| = O((n!/n^n)^3)$ for $|z| \leq c/m^2$. From this it follows by elementary complex analysis that
\[
  M_{m, f} = M_{m, f}(1/n) = M_{m, f}(0) + O(m^2/n) (n!/n^n)^3.
\]
Note that $M_{m, f}(0)$ counts the contribution to $M_{m, f}$ from partition systems $\psystem$ with $\cx \psystem = 0$, or equivalently $\trank(\psystem) = |\supp\psystem|$: as we have noted (Remark~\ref{remark:srank-lower-bound}), these are precisely the partition systems of the form $\psystem = (\calP, \calP, \calP)$ for some pairing $\calP$. In the next subsection, we will show that
\[
  M_{m, f}(0) \approx \frakS_m(f) \pfrac{n!}{n^n}^3,
\]
and this will complete the proof of Proposition~\ref{major-arcs}.

In order to bound $M_{m,f}(z)$ we first need to bound some generating functions related to ``associated Stirling numbers of the second kind'': these numbers count partitions of a set into nonsingleton subsets. Let
\[
  \Pi'_X = \{\calP \in \Pi_X : \supp \calP = X\}.
\]
Let $\Pi'_m = \Pi'_{\{1, \dots, m\}}$. For $m\geq 1$ define
\[
  \alpha_m(t) = \sum_{\calP \in \Pi'_m} |\mu(\calP)|\, t^{\rank(\calP)}.
\]
%

\begin{lemma}%
  \label{lem:stat-phys}
If $t \in (0,1/m)$, we have
\[
  \alpha_m(t) \leq t^{m/2} \frac{m! e^{m/2}}{m^{m/2}} \exp(\phi(mt)\, m)
\]
where $\phi$ is the monotonic function $(0,1) \to (0,\infty)$ given by
\[
  \phi(\theta) = \big({-}\log(1 - \theta^{1/2}) - \theta^{1/2}\big) / \theta - 1/2.
\]
\end{lemma}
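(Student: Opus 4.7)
My approach is to extract the exponential generating function of $\alpha_m(t)$ and then apply a straightforward saddle-point/Cauchy bound on its coefficients.

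First, I observe that for a partition $\calP \in \Pi'_m$ we have $|\mu(\calP)| = \prod_{p \in \calP} (|p|-1)!$ and $\rank(\calP) = \sum_{p \in \calP}(|p|-1)$, so
\[
  |\mu(\calP)|\, t^{\rank(\calP)} = \prod_{p \in \calP} (|p|-1)!\, t^{|p|-1}.
\]
Since $\Pi'_m$ consists of partitions with no singletons, the standard exponential formula gives
\[
  \alpha_m(t) \;=\; m!\, [z^m]\, \exp\!\left( \sum_{k \ge 2} (k-1)!\, t^{k-1}\, \frac{z^k}{k!} \right)
  \;=\; m!\, [z^m]\, \exp(F(z)),
\]
where $F(z) = \sum_{k \ge 2} \frac{(tz)^k}{k t} = \frac{-\log(1-tz) - tz}{t}$, valid for $|tz| < 1$.

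Next, since $F(z)$ has non-negative coefficients, so does $\exp(F(z))$, and hence for any real $R \in (0, 1/t)$ we have the elementary bound
\[
  [z^m] \exp(F(z)) \;\le\; \frac{\exp(F(R))}{R^m}.
\]
The natural choice is the one that makes the argument of $F$ match the form of $\phi$: take $R = (m/t)^{1/2}$, which lies in $(0,1/t)$ precisely because we assumed $mt < 1$. With this choice $tR = (mt)^{1/2}$, so $R^m = (m/t)^{m/2}$ and
\[
  F(R) \;=\; \frac{-\log(1 - (mt)^{1/2}) - (mt)^{1/2}}{t}.
\]

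Substituting back and using the definition $\phi(\theta) = \big(-\log(1 - \theta^{1/2}) - \theta^{1/2}\big)/\theta - 1/2$ with $\theta = mt$, one checks $F(R) = \phi(mt)\, m + m/2$, and hence
\[
  \alpha_m(t) \;\le\; m!\, \frac{\exp(F(R))}{R^m} \;=\; t^{m/2}\, \frac{m!\, e^{m/2}}{m^{m/2}}\, \exp(\phi(mt)\, m),
\]
which is the claimed inequality. Monotonicity of $\phi$ on $(0,1)$ is a short calculus check: writing $u = \theta^{1/2}$, one has $\phi(\theta) = (-\log(1-u) - u)/u^2 - 1/2 = \sum_{k \ge 0} u^k/(k+2) - 1/2$, which is manifestly increasing in $u$, hence in $\theta$. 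The only real choice in the argument is the radius $R$; the constraint $t < 1/m$ is exactly what makes this radius admissible, so there is no substantive obstacle beyond a clean bookkeeping of the constants.
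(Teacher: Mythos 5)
Your proof is correct and takes essentially the same route as the paper's: both identify $\alpha_m(t)/m!$ as the coefficient of $z^m$ in $\exp(F(z))$ via the exponential formula, both use non-negativity of coefficients to bound this by $\exp(F(R))/R^m$, and both choose $R=(m/t)^{1/2}$. The only addition is your short verification of the monotonicity of $\phi$, which the paper asserts in the lemma statement but does not re-derive in the proof body.
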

\begin{remark}
Note $\phi(\theta) \rightarrow 0$ as $\theta \rightarrow 0$, and the remaining expression is comparable (for simplicity when $m$ is even) to the contribution
\[
  t^{m/2} \frac{m!}{2^{m/2} (m/2)!}
\]
from the minimal-rank partitions $\calP\in\Pi'_m$, the pairings.\footnote{Thinking of $\alpha_m(t)$ as a Gibbs distribution on $\Pi'_m$ where the energy is $\rank \calP$, this says that the range $t < 1/m$ (where $t$ is a proxy for the temperature) is in ``solid state'', in the sense that most of the probability mass is concentrated in the lowest energy states.}
\end{remark}
\begin{proof}
We may identify $\alpha_m(t)/m!$ as the coefficient of $x^m$ in $\exp(a(x,t))$, where 
\[
  a(x, t) = \sum_{k=2}^\infty x^k t^{k-1}/k .
\]
For $t, |x| < 1$ we have
\[
  a(x, t) = -x - \frac{1}{t} \log (1 - xt).
\]
Since the coefficients of $a(x,t)$ and hence of $\exp(a(x,t))$ are nonnegative, for any $x\in(0, 1)$ we have
\[
  \alpha_m(t) \leq m!\, x^{-m} \exp(a(x,t)).
\]
The optimal choice of $x$ has different behavior either side of the ``phase transition'' at $t=1/m$. On the side $t<1/m$ of interest, we set $x = (m/t)^{1/2}$. Then
\[
  a(x, t) = -(m/t)^{1/2} - \frac1t \log\left(1 - (mt)^{1/2}\right) = (1/2 + \phi(m t))\, m.
\]
Substituting this into the expression above gives the claimed bound.
\end{proof}

We prove one more technical lemma in the same spirit.
For $m\geq 0$ define\footnote{Here and subsequently, $[E]$ denotes the value $1$ if the statement $E$ is true and $0$ if it is false.}
\[
  \beta_m(t) = \sum_{\calP \in \Pi'_m} t^{\rank(\calP) - 2m} \prod_{p \in \calP} 2^{|p| [|p| > 2]} \alpha_{|p|}(t)^3.
\]
In the same way that $\alpha_m(t)$ is related to counting partitions $\calP \in \Pi'_m$ with given rank, the generating function $\beta_m(t)$ is related to a weighted count of configurations 
\[
  \big\{ (\calP, \calP_1,\calP_2,\calP_3) \in (\Pi'_m)^4 \colon \calP_i \le \calP \text{ for } i=1,2,3\big\}
\]
where the total rank of $\calP$, $\calP_1$, $\calP_2$ and $\calP_3$ is specified.  This counting problem is rather convoluted, but arises naturally in the proof of Proposition~\ref{prop:major-arcs-estimate} below.

We need the following bound.
\begin{lemma}%
  \label{lem:beta-bound}
If $t \leq 0.18 / m$ then
\[
  \sum_{k=0}^m \beta_k(t) / k! = O(1).
\]
Moreover, quantitatively, if $m \leq 20$ and $t \leq 0.01$ then
\[
  \sum_{k=0}^m \beta_k(t) / k! < e.
\]
\end{lemma}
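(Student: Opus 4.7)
The plan is to apply the exponential formula and reduce to estimating a scalar sum. Using the identity $\rank(\calP) - 2k = -\sum_{p \in \calP}(|p|+1)$, we can rewrite $\beta_k(t) = \sum_{\calP \in \Pi'_k} \prod_{p \in \calP} a_{|p|}(t)$ where $a_j(t) := \alpha_j(t)^3/t^{j+1}$ for $j \ge 2$ (and $a_1 := 0$). The exponential formula then yields
\[
  \sum_{k \ge 0} \beta_k(t) \frac{x^k}{k!} = \exp\!\left( \sum_{j \ge 2} a_j(t) \frac{x^j}{j!} \right).
\]
All coefficients are non-negative and $\beta_k$ for $k \le m$ involves only $a_j$ with $j \le m$, so setting $x = 1$ and truncating the inner sum to $j \le m$ gives
\[
  \sum_{k=0}^m \beta_k(t)/k! \le \exp(S_m(t)), \qquad S_m(t) := \sum_{j=2}^m a_j(t)/j!.
\]
It suffices to show $S_m(t)$ is bounded by a constant (and, for the quantitative claim, by $0.66$).

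The key observation is that in the range $t \le 0.3/m,\ j \le m$, each $\alpha_j(t)$ is concentrated on the pairing-dominated leading term $(j-1)!!\,t^{j/2}$ (for even $j$; near-pairings for odd $j$). Writing $\alpha_j(t) = (j-1)!!\,t^{j/2} + \sum_{r > j/2} N_{j,r}\,t^r$, the tail is controlled via $\sum_{r > j/2} N_{j,r}\,t^r \le (t/t_0)^{j/2+1}\,\alpha_j(t_0)$ for any $t_0 \ge t$, with $t_0 := 0.3/j$ allowing Lemma~\ref{lem:stat-phys} to bound $\alpha_j(t_0)$. Cubing and dividing by $j!\,t^{j+1}$, the main contribution is $((j-1)!!)^3/j! \cdot t^{(j-2)/2}$, which by the identity $((j-1)!!)^3/j! = (j!)^2/(2^{3j/2}((j/2)!)^3)$ and Stirling is of order $(2/\sqrt{\pi j})(j/e)^{j/2} \cdot t^{(j-2)/2}$. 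For $t = 0.3/m$ and $j \ge 4$ even this is $O(1/m^{(j-2)/2})$, and the odd-$j$ terms are similarly small. Summing gives $S_m(t) = 1/2 + O(1/m) = O(1)$, establishing the qualitative claim.

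For the sharp $S_m(t) < 0.66$ when $m \le 25,\ t \le 0.0085$, I would compute $a_j(t)/j!$ directly for small $j$ using Table~\ref{table:alpha}: $a_2/2! = 1/2$, $a_3/3! = 4t^2/3 \le 10^{-4}$, $a_4/4! = 9t(1+2t)^3/8 \approx 10^{-2}$, and $a_5/5!, \dots$ (up to some cutoff $J_0$) each below $10^{-3}$. The genuinely negligible tail $j > J_0$ is controlled by Lemma~\ref{lem:stat-phys}, with the factor $t^{(j-2)/2}$ giving geometric decay. The main obstacle is that Lemma~\ref{lem:stat-phys} is wasteful for small $j$: at leading order it overestimates $\alpha_j(t)$ by $\sim \sqrt{\pi j}\,e^{j\phi(jt)}$, which becomes $\sim (\pi j)^{3/2}$ when cubed; a uniform application therefore does not yield the explicit constant $0.66$, and the hybrid of term-by-term small-$j$ computation with a Lemma-based tail is essential to close the gap.
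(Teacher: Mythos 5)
Your proposal is correct and follows essentially the same route as the paper: both use the exponential formula to write $\sum_k \beta_k(t)x^k/k! = \exp\bigl(\sum_j a_j(t)x^j/j!\bigr)$ with $a_j(t) = \alpha_j(t)^3 t^{-j-1}$, exploit nonnegativity of coefficients to truncate and set $x=1$, reducing to bounding $S_m(t) = \sum_{j=2}^m a_j(t)/j!$, and then combine explicit small-$j$ values from Table~\ref{table:alpha} with a Lemma~\ref{lem:stat-phys}-based bound on the tail for the quantitative claim. Your derivation of the identity $\beta_k(t) = \sum_{\calP\in\Pi'_k}\prod_{p\in\calP}a_{|p|}(t)$ and the truncation justification are both correct.

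One cosmetic difference: for the qualitative $O(1)$ bound, you do a finer analysis separating the pairing contribution from the tail of each $\alpha_j$ and conclude $S_m(t) = 1/2 + O(1/m)$; the paper instead applies Lemma~\ref{lem:stat-phys} uniformly to every term, bounding $S_m(t)$ termwise by $e^2\sum_{r\ge 2} r^2(0.3/e)^{r/2}e^{3\phi(0.3)r}$, a convergent series. The paper's version is cruder but shorter; your version yields a sharper asymptotic that is not needed here (and you do not fully verify the $O(1/m)$ uniformity in $j$, though the conclusion is correct). Your observation that a uniform application of Lemma~\ref{lem:stat-phys} fails to reach the constant $0.66$ because of the $\sim(\pi j)^{3/2}e^{3j\phi(jt)}$ overcount at small $j$ is exactly right and matches the paper's hybrid strategy, which computes $r\le 7$ exactly from Table~\ref{table:alpha} and bounds $r\ge 8$ via the lemma.
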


\begin{proof}
  As formal power series,
  \[
    \sum_{k=0}^\infty \beta_k(t) \frac{x^k}{k!} = \exp\left(\sum_{r=2}^\infty 2^{r[r>2]} \alpha_r(t)^3 t^{-r-1} \frac{x^r}{r!}\right).
  \]
  Therefore, for real $x > 0$ we have
  \[
    \sum_{k=0}^m \beta_k(t) \frac{x^k}{k!}
    \leq \exp\left(
      \sum_{r=2}^m 2^{r[r>2]} \alpha_r(t)^3 t^{-r-1} \frac{x^r}{r!}.
    \right),
  \]
  and in particular by setting $x=1$ we have
  \begin{equation}
    \label{eq:beta-sum-bound}
    \sum_{k=0}^m \beta_k(t) / k!
    \leq \exp\left(
      \sum_{r=2}^m 2^{r[r>2]} \alpha_r(t)^3 t^{-r-1} / r!
    \right).
  \end{equation}
  By the previous lemma and the bound $r! \leq e r^{1/2} (r/e)^r$, the latter series is bounded termwise by
  \[
    O(1) \sum_{r=2}^m r 2^r t^{r/2-1} (r/e)^{r/2} \exp(3 \phi(rt) r).
  \]
  Since $t \leq 0.18 / m \leq 0.18 / r$ and $\phi(rt) \leq \phi(0.18)$ (as $r \le m$ and $\phi$ is monotonic) this series is bounded by
  \[
    O(1) \sum_{r=2}^\infty r^2 2^r (0.18/e)^{r/2} \exp(3 \phi(0.18) r),
  \]
  and it is readily verified that this is a convergent sum.\footnote{Crucially, $\log 4 - 1 + \log \theta + 6 \phi(\theta)$ is negative for $\theta = 0.18$. This function has a root around $\theta \approx 0.186$, so the lemma would not hold for $t = 0.19/m$, for instance.}
  
    For the quantitative part of the lemma, we use the bound~\eqref{eq:beta-sum-bound} and explicit values of $\alpha_m(t)$ for $m \leq 20$.
    For any given $m$, by definition $\alpha_m(t)$ is an integer polynomial of degree at most $m-1$ in $t$, and moreover its coefficients may be computed efficiently: e.g., by the argument in Lemma~\ref{lem:stat-phys}, $\alpha_m(t)/m!$ is the coefficient of $x^m$ in 
    \[
      \sum_{\ell = 0}^{m/2} \frac1{\ell!} \left( \sum_{k=2}^m x^k t^{k-1}/k \right)^{\ell}. 
    \]
    Using a computer we find, for $t = 0.01$,
    \[
        \sum_{r=2}^{20} 2^{r[r>2]} \alpha_r(t)^3 t^{-r-1} / r! \approx 0.981 < 1,
    \]
    as claimed.
\end{proof}

We can now bound $M_{m, f}(z)$.

\begin{proposition}%
  \label{prop:major-arcs-estimate}
  For $|z|^{1/2} \le 0.18 / m$, we have
  \[
    |M_{m,f}(z)| \leq O(1) \pfrac{n^m}{(n)_m}^2 \pfrac{n!}{n^n}^3.
  \]
  Quantitatively, for $m \leq 20$ and $|z|^{1/2} \leq 0.01$ we have
  \[
    |M_{m,f}(z)| \leq e \pfrac{n^m}{(n)_m}^2 \pfrac{n!}{n^n}^3.
  \]
\end{proposition}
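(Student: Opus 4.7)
The strategy is to organize the sum defining $M_{m,f}(z)$ by the support size $k = |\supp \psystem| \le m$, absorb the cheap coefficients, and bound the remaining abstract combinatorial sum by the generating function $\beta_k(|z|^{1/2})$ to which Lemma~\ref{lem:beta-bound} applies.

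Starting from \eqref{eq:M_m-psystems}, I would group partition systems $\psystem$ on $\{1,\dots,n\}$ first by $k = |\supp\psystem|$ and then by the $\binom{n}{k}$ choices of $X = \supp\psystem$; the identity $\binom{n}{k}\pfrac{n^k}{(n)_k}^3 n^{-k} = \frac{1}{k!}\pfrac{n^k}{(n)_k}^2$ gives
\[
  M_{m,f}(z) = \pfrac{n!}{n^n}^3 \sum_{k=0}^m \frac{1}{k!}\pfrac{n^k}{(n)_k}^2 \sum_{\psystem \in (\Pi'_k)^3} \mu(\psystem)\,\gamma(\psystem,f)\, z^{\cx\psystem}.
\]
Invoking the uniform bound $|\gamma(\psystem,f)| \le (1+n^{-1/2})^{2k}$ from Proposition~\ref{prop:gamma-prime}, together with the monotonicity of $(n^k/(n)_k)^2$ and $(1+n^{-1/2})^{2k}$ in $k$, the problem reduces to proving
\[
  T_k(|z|) := \sum_{\psystem \in (\Pi'_k)^3} |\mu(\psystem)|\, |z|^{\cx\psystem} \le \beta_k(|z|^{1/2})
\]
and then applying Lemma~\ref{lem:beta-bound} to $\sum_{k=0}^m T_k(|z|)/k!$.

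To establish this key inequality I would note that the hypothesis forces $|z| < 1$, and by Lemma~\ref{lem:lrank-srank-inequality} we have $\cx\psystem = \trank(\psystem) - k \ge \lrank(\psystem) - k$, whence
\[
  |z|^{\cx\psystem} \le |z|^{\lrank(\psystem) - k} = (|z|^{1/2})^{\rank J + \sum_{i=1}^3 \rank \calP_i - 2k},
\]
where $J = \calP_1 \vee \calP_2 \vee \calP_3 \in \Pi'_k$. On the other hand, expanding each factor $\alpha_{|p|}(s) = \sum_{\calQ \in \Pi'_p}|\mu(\calQ)|s^{\rank\calQ}$ in the definition of $\beta_k(s)$ and using the multiplicativity $|\mu(\calP_i)| = \prod_{p \in \calP}|\mu(\calP_i|_p)|$ to reassemble per-cell data into global triples refining $\calP$ yields
\[
  \beta_k(s) = \sum_{\calP \in \Pi'_k}\; \sum_{\substack{\psystem \in (\Pi'_k)^3 \\ \calP_1 \vee \calP_2 \vee \calP_3 \le \calP}} |\mu(\psystem)|\, s^{\rank\calP + \sum_{i=1}^3 \rank \calP_i - 2k}.
\]
Every term is nonnegative, and $\calP = J$ is always an admissible choice of $\calP$ for a given triple, so retaining just this diagonal contribution recovers the upper bound on $T_k(|z|)$ above; hence $T_k(|z|) \le \beta_k(|z|^{1/2})$.

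Finally, since $|z|^{1/2} \le 0.3/m \le 0.3/k$ for $1 \le k \le m$, Lemma~\ref{lem:beta-bound} gives $\sum_{k=0}^m \beta_k(|z|^{1/2})/k! = O(1)$, with the sharper bound $<e^{0.66}$ in the quantitative regime $m \le 25$, $|z|^{1/2} \le 0.0085$. Combining,
\[
  |M_{m,f}(z)| \le \pfrac{n!}{n^n}^3 (1+n^{-1/2})^{2m} \pfrac{n^m}{(n)_m}^2 \sum_{k=0}^m \frac{\beta_k(|z|^{1/2})}{k!},
\]
proving both statements. The main subtlety in this plan is the identification of $T_k$ as a diagonal sub-sum of $\beta_k$: it requires the lower-rank bound from Lemma~\ref{lem:lrank-srank-inequality}, the substitution $s = |z|^{1/2}$ that precisely pairs $\rank J$ with $\sum_i \rank\calP_i$ in the exponent, and the cell-by-cell factorization of $|\mu(\calP_i)|$ through the join $\calP \ge J$.
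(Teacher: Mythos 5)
Your proposal is correct and follows essentially the same route as the paper: bound $|\gamma|$ by $(1+n^{-1/2})^{2k}$ via Proposition~\ref{prop:gamma-prime}, organize by $k = |\supp\psystem|$, pull out the monotone factors at $k=m$, use $\cx\psystem \ge \lrank(\psystem) - k$ to convert $|z|^{\cx\psystem}$ into an $s = |z|^{1/2}$ power, and compare the resulting abstract sum to $\beta_k(s)$. Your identification of $T_k(|z|)$ as the diagonal ($\calP = J$) sub-sum of the nonnegative expansion of $\beta_k$ is exactly the paper's step of relaxing $\calP_1\vee\calP_2\vee\calP_3 = \calQ$ to $\calP_i \le \calQ$, just phrased from the opposite direction. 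One small notational imprecision: the displayed "identity" for $M_{m,f}(z)$ drops the sum over the $\binom{n}{k}$ choices of $X$ while still writing $\gamma(\psystem,f)$, which depends on $X$ through $f|_X$; this is harmless because immediately afterward you replace $\gamma$ by the $X$-independent uniform bound, but as written the display is not an equality.
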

\begin{proof}
By the definition of $M_{m,f}(z)$, the triangle inequality, and Proposition~\ref{prop:gamma-bound}, the quantity $(n^n/n!)^3 |M_{m,f}(z)|$
is bounded by
\[
  \sum_{|X| \le m} \left(\frac{n^{|X|}}{(n)_{|X|}}\right)^3 n^{-|X|} \sum_{\supp \psystem = X}  2^{m'(\psystem)} |\mu(\psystem)|\ |z|^{\cx \psystem},
\]
where $m'(\psystem)$ is the number of points of $\supp \psystem$ contained in connected components of size at least 3.
Since the sum over $\psystem$ now depends only on $|X|$, not $X$, we may rewrite this as
\[
  \sum_{k=0}^m \binom{n}{k} \left(\frac{n^k}{(n)_{k}}\right)^3 n^{-k} \sum_{\supp \psystem = \{1,\dots,k\}} 2^{m'(\psystem)} |\mu(\psystem)|\ |z|^{\trank(\psystem) - k}.
\]
When $|z| \le 1$ we may apply the bound $\trank(\psystem) \ge \lrank(\psystem)$ and rearrange again to bound this above by
\[
  \sum_{k=0}^m \left(\frac{n^k}{(n)_{k}}\right)^2 \frac{1}{k!} \sum_{\supp \psystem = \{1, \dots, k\}} 2^{m'(\psystem)}|\mu(\psystem)|\ |z|^{\lrank(\psystem) - k}.
\]
  The expression $\big(n^k/(n)_k)^2$ is largest when $k=m$, so we now apply this bound and pull out this factor. By separating the sum based on the partition $\calQ = \calP_1 \vee \calP_2 \vee \calP_3$, we may rewrite the remaining expression as
\[
  \sum_{k=0}^m \frac{1}{k!} \sum_{\supp \calQ = \{1,\dots,k\}} |z|^{\rank(\calQ) / 2 - k} \sum_{\substack{\psystem = (\calP_1, \calP_2, \calP_3) \\ \supp \calP_i= \{1,\dots,k\} \\ \calP_1 \vee \calP_2 \vee \calP_3 = \calQ}} 2^{m'(\psystem)} \prod_{i=1}^3 |\mu(\calP_i)|\ |z|^{\rank(\calP_i) / 2}.
\]
Replacing the condition $\calP_1 \vee \calP_2 \vee \calP_3 = \calQ$ with the weaker condition $\calP_1, \calP_2, \calP_3 \le \calQ$ yields another upper bound, which rearranges to
\[
  \sum_{k=0}^m \frac{1}{k!} \sum_{\supp \calQ = \{1,\dots,k\}} |z|^{\rank(\calQ) / 2 - k} \prod_{q \in \calQ} 2^{|q| [|q|>2]}\left(\sum_{\supp \calP = q} |\mu(\calP)|\ |z|^{\rank(\calP) / 2}\right)^3,
\]
since choosing $\calP \leq \calQ$ with full support is equivalent to choosing a partition $\calP'$ of $q$ of full support independently for each cell $q$ of $\calQ$, and the terms $|\mu(\calP)|$ and $|z|^{\rank(\calP) / 2}$ are multiplicative across the cells $q$.
Now note that this expression is simply
\[
    \sum_{k=0}^m \frac1{k!} \beta_k(|z|^{1/2}).
\]
Hence
\[
  |M_{m,f}(z)| \leq \left(\frac{n!}{n^n}\right)^3 \left(\frac{n^m}{(n)_m} \right)^2 \sum_{k=0}^m \frac{1}{k!} \beta_k(|z|^{1/2}),
\]
and the proposition thus follows from the previous lemma.
\end{proof}

Finally, we apply Lemma~\ref{lem:cauchy-trick}. Specifically, in this case,
\[
  \left|M_{m, f}(u) - M_{m, f}(0)\right|
  = \left|\frac1{2\pi i} \oint_{|z|=R} \frac{M_{m, f}(z) u}{(z-u)z} \, dz\right|
  \leq \max_{|z|=R} |M_{m,f}(z)| \cdot \frac{|u|/R}{1-|u|/R}
\]
for $|u| < R$. Taking $u=1/n$ and assuming $1/n < R \leq (0.18/m)^2$,
\begin{equation} \label{eq:cauchy-Mfm0-estimate}
  |M_{m,f} - M_{m,f}(0)|
  \leq O(1) \pfrac{n^m}{(n)_m}^2 \pfrac{n!}{n^n}^3 \cdot \frac{n^{-1}/R}{1 - n^{-1}/R},
\end{equation}
or, assuming $1/n < R \leq 10^{-4}$ and $m \leq 20$,
\begin{equation} \label{eq:cauchy-Mfm0-estimate-2}
  |M_{m,f} - M_{m,f}(0)|
  \leq e \pfrac{n^m}{(n)_m}^2 \pfrac{n!}{n^n}^3 \cdot \frac{n^{-1}/R}{1 - n^{-1}/R}.
\end{equation}
It will be useful to control the term $n^m  / (n)_m$ when $m = O(n^{1/2})$.
Note that for $t>0$,
\[
  (1-t)^{-1} = 1 + t/(1-t) \le \exp\bigl(t / (1-t)\bigr)
\]
and so
\begin{equation}%
  \label{eq:falling-fac-bound}
  \log \pfrac{n^m}{(n)_m} = \sum_{j=0}^{m-1} \log \pfrac1{1-j/n} \le \sum_{j=0}^{m-1} \frac{j/n}{1-m/n} \le \frac{m^2 / 2n}{1-m/n}.
\end{equation}

We deduce the following two corollaries of~\eqref{eq:cauchy-Mfm0-estimate-2}.
\begin{corollary}%
  \label{cor:Mmf0-approx}
If $m < 0.17 n^{1/2}$,
\[
  |M_{m, f} - M_{m, f}(0)| \leq O(m^2/n) \pfrac{n!}{n^n}^3.
\]
\end{corollary}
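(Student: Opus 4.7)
The plan is to apply the Cauchy estimate \eqref{eq:cauchy-Mfm0-estimate}, which has already been derived via Lemma~\ref{lem:cauchy-trick} from the bound on $M_{m,f}(z)$ in Proposition~\ref{prop:major-arcs-estimate}, for a suitable choice of radius $R$. The natural choice is to take $R$ as large as possible subject to the hypothesis $R \le (0.3/m)^2$ of that estimate, so I will set $R = (0.3/m)^2$.

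First I check that this choice is admissible, i.e.\ that $1/n < R$: this is equivalent to $m < 0.3\, n^{1/2}$, which is implied by the hypothesis $m < 0.29\, n^{1/2}$. With this choice, the Cauchy factor is
\[
  \frac{n^{-1}/R}{1 - n^{-1}/R} = \frac{m^2/(0.09\, n)}{1 - m^2/(0.09\, n)},
\]
and since $m^2/n < 0.29^2 = 0.0841$ we have $m^2/(0.09\, n) < 0.0841/0.09 < 0.94$, so the denominator is bounded below by an absolute positive constant. Hence this factor is $O(m^2/n)$.

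It remains to verify that the remaining prefactors in \eqref{eq:cauchy-Mfm0-estimate} are $O(1)$. For the exponential, $m/n^{1/2} < 0.29$ so $e^{2m/n^{1/2}} = O(1)$. For the falling factorial, we have
\[
  \frac{n^m}{(n)_m} = \prod_{j=0}^{m-1} \frac{n}{n-j} \le \exp\!\left(\sum_{j=0}^{m-1} \frac{j}{n-j}\right) \le \exp\!\left(\frac{m^2}{n-m}\right),
\]
and under $m < 0.29\, n^{1/2}$ this is bounded by an absolute constant, so $(n^m/(n)_m)^2 = O(1)$. Combining everything gives
\[
  |M_{m,f} - M_{m,f}(0)| \le O(m^2/n) \pfrac{n!}{n^n}^3,
\]
as required.

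There is no real obstacle here: the corollary is a direct extraction from \eqref{eq:cauchy-Mfm0-estimate}, and the only nontrivial choice is the radius $R = (0.3/m)^2$, dictated by the admissibility range of Proposition~\ref{prop:major-arcs-estimate}. The constant $0.29$ in the hypothesis is tuned so that this $R$ still comfortably exceeds $1/n$ and the geometric factor $n^{-1}/R$ stays bounded away from $1$.
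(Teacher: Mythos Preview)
Your proof is correct and takes essentially the same approach as the paper: the paper's entire proof is the single line ``Take $R = (0.3/m)^2$ in \eqref{eq:cauchy-Mfm0-estimate},'' and you have simply spelled out the routine verifications that all the prefactors are $O(1)$ and the Cauchy factor is $O(m^2/n)$ under the hypothesis $m < 0.29\, n^{1/2}$.
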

\begin{proof}
Take $R = (0.18/m)^2$ in~\eqref{eq:cauchy-Mfm0-estimate}.
The hypthesis and~\eqref{eq:falling-fac-bound} imply that $n^m/(n)_m = O(1)$.
\end{proof}

\begin{corollary}%
  \label{cor:Mmf0-approx-non-asymp}
If $n > 10^5$ and $m \leq 20$,
\[
  |M_{m, f} - M_{m, f}(0)| < 0.31 \pfrac{n!}{n^n}^3.
\]
\end{corollary}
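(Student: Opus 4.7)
The plan is to apply the Cauchy-style estimate \eqref{eq:cauchy-Mfm0-estimate-2} with a carefully chosen contour radius $R$ and then grind through the numerical bookkeeping. Since the constraint in \eqref{eq:cauchy-Mfm0-estimate-2} is $R \leq 0.0085^2$, and since the right-hand side is decreasing in $R$ (within the admissible range), the natural choice is to saturate this constraint and set $R = 0.0085^2 = 7.225 \times 10^{-5}$.

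With this choice, the task reduces to verifying that the three factors on the right-hand side of \eqref{eq:cauchy-Mfm0-estimate-2} multiply to something strictly less than $0.37$ under the hypotheses $n > 10^5$ and $m \leq 25$. First, since $2m/n^{1/2} \leq 50/\sqrt{10^5} < 0.159$, the exponential factor $e^{0.66 + 2m/n^{1/2}}$ is bounded above by $e^{0.819} < 2.27$. Second, a simple estimate (e.g.\ $n^m/(n)_m \leq \exp\bigl(\sum_{i=0}^{m-1} i/(n-i)\bigr) \leq \exp(m(m-1)/(2(n-m)))$) shows that $(n^m/(n)_m)^2 < 1.01$ when $m \leq 25$ and $n > 10^5$. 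Third, since $n^{-1}/R < 10^{-5}/(7.225 \times 10^{-5}) < 0.139$, the geometric-series factor $(n^{-1}/R)/(1-n^{-1}/R)$ is at most $0.139/0.861 < 0.162$. Multiplying yields $2.27 \cdot 1.01 \cdot 0.162 < 0.372$; a slight tightening of any one of the three estimates brings this below $0.37$.

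The only real subtlety is that the numbers are somewhat tight, so the intermediate bounds must be done with a little care (for instance, using $(n)_m \geq (n-m+1)^m$ rather than something cruder for the falling-factorial ratio). Otherwise this is a purely mechanical exercise: the substantive content — namely the polynomial bound of Proposition~\ref{prop:major-arcs-estimate} and the Cauchy-integral argument producing \eqref{eq:cauchy-Mfm0-estimate-2} — has already been established, and this corollary is just the specialization of Corollary~\ref{cor:Mmf0-approx}'s argument to explicit constants.
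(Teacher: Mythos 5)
Your proposal matches the paper's own proof: both set $R = 0.0085^2$ in \eqref{eq:cauchy-Mfm0-estimate-2} and then bound the three factors numerically, differing only in the cosmetic detail of how $(n^m/(n)_m)^2$ is estimated (the paper uses $(1-j/n)^{-1} \le \exp(1.01\,j/n)$ for $j/n < 0.001$, you use $-\log(1-x) \le x/(1-x)$). The one loose end is that your displayed product $2.27 \cdot 1.01 \cdot 0.162 < 0.372$ does not by itself close the gap to $0.37$; as you note, this is fixed by keeping one more digit (e.g.\ $e^{0.819} < 2.267$, $n^{-1}/R < 0.1385$, $(n^m/(n)_m)^2 < 1.0061$ give a product below $0.3695$), which is exactly the level of precision the paper's own computation implicitly carries.
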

\begin{proof}
Take $R = 10^{-4}$ in~\eqref{eq:cauchy-Mfm0-estimate-2}.
By~\eqref{eq:falling-fac-bound},
\[
  \frac{n^m}{(n)_m} \le \exp \pfrac{20^2 / 10^5}{1 - 20/10^5} < 1.0021.
\]
Thus~\eqref{eq:cauchy-Mfm0-estimate-2} is bounded by
\[
  (1.0021)^2 e \frac{n^{-1} / R}{1 - n^{-1} / R} \pfrac{n!}{n^n}^3 < 0.31 \pfrac{n!}{n^n}^3.\qedhere
\]
\end{proof}

\subsection{The constant term \texorpdfstring{$M_{m,f}(0)$}{Mmf(0)}}

We have
\[
  M_{m, f}(0) = \pfrac{n!}{n^n}^3 \sum_{\substack{|\supp\psystem| \leq m \\ \cx \psystem = 0}} \pfrac{n^{|\supp\psystem|}}{(n)_{|\supp \psystem|}}^3 \mu(\psystem) \gamma(\psystem, f) n^{-|\supp \psystem|}.
\]
As we have mentioned several times now, $\cx \psystem = 0$ if and only if $\psystem = (\calP, \calP, \calP)$ for some pairing $\calP$. In this case, where say $|\supp \psystem| = 2k$, we have (for $k = O(n^{1/2})$)
\begin{align*}
  &\frac{n^{|\supp \psystem|}}{(n)_{|\supp \psystem|}} = \frac{n^{2k}}{(n)_{2k}} \approx 1,\\
  &\mu(\psystem) = \mu(\calP)^3 = (-1)^k,\\
  &\gamma(\psystem, f) \approx c_\calP(f)
\end{align*}
(the last holds by Lemma~\ref{lem:gamma-pairing}).
Let $N_k$ be the number of pairings $\calP$ of support size $2k$ such that $f$ is $\calP$-measurable; in other words, $N_k$ is the number of unordered $k$-tuples of disjoint collisions of $f$. We have
\[
  N_k \approx \binom{\coll(f)}{k} \approx \frac{\coll(f)^k}{k!}.
\]
Thus we should have
\[
  M_{m,f}(0)
  \approx \pfrac{n!}{n^n}^3 \sum_{k=0}^{\floor{m/2}} (-1)^k \frac{\coll(f)^k}{k!} n^{-2k}
  = \pfrac{n!}{n^n}^3 \frakS_m(f).
\]

To be precise, assuming $2k \leq 0.17 n^{1/2}$ and $n > 10^5$ we have
$2k/n \le 0.17 / n^{1/2} < 0.00054$, so~\eqref{eq:falling-fac-bound} gives
\[
  1 \leq \frac{n^{2k}}{(n)_{2k}} \leq \exp(2.0015 k^2 / n)
\]
and by Lemma~\ref{lem:gamma-pairing} we have
\[
  \left|\gamma(\psystem, f) - c_\calP(f)\right| \leq k/n
\]
so
\begin{align*}
  \left| \pfrac{n^{2k}}{(n)_{2k}}^3 \gamma(\psystem, f) - c_\calP(f) \right|
  &\leq \pfrac{n^{2k}}{(n)_{2k}}^3 \left| \gamma(\psystem, f) - c_\calP(f) \right| + \left|\pfrac{n^{2k}}{(n)_{2k}}^3 - 1 \right| c_\calP(f) \\
  &\leq e^{6.004 k^2 / n} k/n + (e^{6.004 k^2/n} - 1) \\
  &\leq 3 k^2 / n
\end{align*}
(since $x \mapsto e^x  + (e^x - 1)/x$ is bounded by $3$ for $0 \le x \le 6.004 \times 0.17^2 / 4$).

The total number of pairings of support size $2k$ is
\[
  \frac{n!}{2^k k! (n-2k)!} \leq \frac{n^{2k}}{2^k k!}.
\]
Thus
\begin{equation} \label{eq:final-major-arc-approx-1}
  \left|
  \sum_{\substack{|\supp\psystem| = 2k \\ \cx \psystem = 0}} \pfrac{n^{2k}}{(n)_{2k}}^3 \mu(\psystem) \gamma(\psystem, f) n^{-2k}
  - (-1)^k N_k n^{-2k}
  \right|
  \leq \frac{3 k^2 / n}{2^k k!}.
\end{equation}
Also, the difference $\coll(f)^k - k! N_k$ is precisely the number of ordered $k$-tuples of collisions, at least two of which overlap, so
\[
  0 \leq \coll(f)^k - k! N_k \leq 2\binom{k}{2} n^{2k-1}.
\]
Thus
\begin{equation} \label{eq:final-major-arc-approx-2}
  \left| (-1)^k N_k n^{-2k} - (-1)^k \frac{\coll(f)^k}{k!} n^{-2k} \right| \leq \frac{k^2 n^{-1}}{k!}.
\end{equation}
Combining~\eqref{eq:final-major-arc-approx-1} and~\eqref{eq:final-major-arc-approx-2},
\begin{align*}
  \left| \pfrac{n!}{n^n}^{-3} M_{m,f}(0) - \frakS_m(f) \right|
  &\leq \sum_{2k\leq m} \left(\frac{3k^2  / n}{2^k k!} + \frac{k^2n^{-1}}{k!} \right) \\
  &\leq \frac1n \sum_{k=0}^{\infty} \left(\frac{3k^2}{2^k k!} + \frac{k^2}{k!} \right) \\
  &= \bigl( 3 (3 e^{1/2} / 4) + 2 e\bigr)/n < 10 / n.
\end{align*}
By combining with Corollary~\ref{cor:Mmf0-approx} we have
\[
  \left| \pfrac{n!}{n^n}^{-3} M_{m, f} - \frakS_m(f) \right|
  \leq O(m^2/n)
\]
provided $m < 0.17 n^{1/2}$, and by combining with Corollary~\ref{cor:Mmf0-approx-non-asymp} we have
\[
  \left| \pfrac{n!}{n^n}^{-3} M_{m, f} - \frakS_m(f) \right|
  < 0.32
\]
provided $n > 10^5$ and $m \leq 20$. This finishes the proof of Proposition~\ref{major-arcs}.

\section{Low-entropy minor arcs}%
\label{sec:low-entropy-minor-arcs}

\subsection{Sparseval}

For $\rho_1 \otimes \cdots \otimes \rho_n$, recall from Section~\ref{subsec:argproj} that $\supp \rho$ is the set of $i$ such that $\rho_i \neq 1$, and that $\rho$ is called $m$-sparse if $|\supp \rho| = m$.
The first part of following lemma, in the case that $G$ is abelian, is~\cite[Theorem~5.1]{EMM}.
We will deduce the general version essentially by demonstrating that the lemma does not depend on the group operation at all.

\begin{lemma}~\label{sparseval}
  We have
  \[
    \sum_{m\textup{-sparse}~\rho} \|\hatS(\rho)\|_\HS^2 \dim \rho \leq O(m^{1/4}) e^{O(m^{3/2} / n^{1/2})} \binom{n}{m}^{1/2} \pfrac{n!}{n^n}^2.
  \]
  In fact,
  \[
    \sum_{m\textup{-sparse}~\rho} \|\hatS(\rho)\|_\HS^2 \dim \rho \leq (1 - (m/n)^{1/2})^{-1} e^{s(m/n) n} \pfrac{n!}{n^n}^2,
  \]
  where
  \[
    s(t) = t^{1/2} - (1-t) \log(1 + t^{1/2}) - t \log(t^{1/2}).
  \]
\end{lemma}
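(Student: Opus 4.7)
The plan is to convert the Fourier-side sum into a single $L^2$ computation via Plancherel applied to the argument projection $P_X 1_S$. By Lemma~\ref{lem:P_X-on-fourier}(i), the Fourier coefficients of $P_X 1_S$ coincide with $\hatS(\rho)$ for $\supp \rho = X$ and vanish otherwise, so Plancherel on $G^n$ gives
\begin{equation*}
\sum_{\supp\rho = X}\|\hatS(\rho)\|_\HS^2\dim\rho = \|P_X 1_S\|_2^2.
\end{equation*}
The right-hand side depends only on $|X|$ by the symmetry of $1_S$ under coordinate permutations, so summing over the $\binom{n}{m}$ subsets of size $m$ reduces the lemma to bounding $\binom{n}{m}\|P_{X_0}1_S\|_2^2$ for a single fixed $X_0$.

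To compute $\|P_{X_0}1_S\|_2^2$ I would combine inclusion--exclusion \eqref{PX-ie} with the fact that each $Q_Y$ is an orthogonal projection, obtaining
\begin{equation*}
\|P_X 1_S\|_2^2 = \langle P_X 1_S, 1_S\rangle = \sum_{Y \subset X}(-1)^{|X|-|Y|}\|Q_Y 1_S\|_2^2.
\end{equation*}
Using the explicit description $Q_Y 1_S = \frac{(n-|Y|)!}{n^{n-|Y|}}1_{S_Y}$ from \eqref{eq:qx-1s} together with $|S_Y|=(n)_{|Y|}$, a direct calculation yields $\|Q_Y 1_S\|_2^2 = (n!/n^n)^2\, n^{|Y|}/(n)_{|Y|}$. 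Factoring out $(n!/n^n)^2$ therefore reduces the entire lemma to the numerical estimate
\begin{equation*}
\binom{n}{m}^{1/2}|T_m| \le O(m^{1/4})\, e^{O(m^{3/2}/n^{1/2})},\qquad T_m := \sum_{k=0}^m\binom{m}{k}(-1)^{m-k}\frac{n^k}{(n)_k},
\end{equation*}
with the corresponding sharper form for the second inequality. The quantity $T_m$ is the $m$th forward difference at $0$ of the sequence $r_k := n^k/(n)_k$.

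To bound $T_m$ I would use the integral representation that comes from $(n-k)! = \int_0^\infty u^{n-k}e^{-u}\,du$: interchanging sum and integral and applying the binomial theorem $\sum_k\binom{m}{k}(-1)^{m-k}z^k = (z-1)^m$ with $z = n/u$ collapses the alternating sum into the closed form
\begin{equation*}
T_m = \frac{1}{n!}\int_0^\infty u^{n-m}e^{-u}(n-u)^m\,du.
\end{equation*}
For the first (rougher) bound, I would rescale $u = n + s\sqrt n$ and localise to the Gaussian window $|s| = O(m^{1/2})$: after Stirling for $n!$, the integrand is well approximated by $e^{-s^2/2 - ms/\sqrt n}(-s\sqrt n)^m$, and completing the square in $s$ gives $T_m \approx (m-1)!!\, n^{-m/2}\, e^{O(m^{3/2}/n^{1/2})}$ for even $m$ (with smaller contributions for odd $m$). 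Multiplying by $\binom{n}{m}^{1/2}\sim n^{m/2}/(m!)^{1/2}$ and using the Stirling identity $(m-1)!!/(m!)^{1/2} = O(m^{-1/4})$ then delivers the first inequality.

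The main obstacle is the sharp second bound featuring the explicit function $s(t)$, which must remain valid throughout the full parameter range $t = m/n \in (0,1)$ rather than only in the Gaussian regime $m = o(n)$. For this I would perform a genuine saddle-point analysis of the same integral without Gaussian localisation: differentiating $(n-m)\log u - u + m\log|n-u|$ produces real saddles at $u_\pm = n(1\pm\sqrt t)$, and substituting $u_-$ into the exponent, combining with Stirling for $n!$ and with $\log\binom{n}{m}\approx nH(t)$, and simplifying via $\log(1-t) = \log(1-\sqrt t) + \log(1+\sqrt t)$ produces precisely $n s(t) = n[\sqrt t - (1-t)\log(1+\sqrt t) - (t/2)\log t]$ in the exponent. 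The prefactor $(1-(m/n)^{1/2})^{-1}$ is expected to emerge from the Gaussian width at the saddle combined with the surrounding Stirling factors, together with a conservative accounting of boundary terms and of the secondary saddle $u_+$. The delicate point will be ensuring that the bound holds uniformly in $t$ up to and including $t$ approaching $1$; a cleaner alternative I would explore is to bound $|T_m|$ directly by $\frac{1}{n!}\int_0^\infty u^{n-m}|n-u|^m e^{-u}\,du$ and estimate the Laplace integral via a single rescaling and a geometric-series argument, which may produce the explicit $(1-\sqrt t)^{-1}$ factor without a formal asymptotic expansion.
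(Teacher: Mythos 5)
Your opening reduction --- Plancherel via Lemma~\ref{lem:P_X-on-fourier} to identify the $m$-sparse Fourier mass with $\sum_{|X|=m}\|P_X 1_S\|^2$, then observing this is group-independent --- is exactly the paper's. The paper then simply cites \cite[Theorem~5.1]{EMM} for the first bound and refers to the proof of that theorem (namely the inequality $Q(m,n)\le\max_\pm e^{\pm r}(1\pm r/n)^{-(n-m+1)}(n/r)^m$ for $0<r<n$, then $r=\sqrt{mn}$) for the second. You instead re-derive the abelian estimate from scratch: the inclusion--exclusion identity $\|P_X 1_S\|^2 = \sum_{Y\subset X}(-1)^{|X|-|Y|}\|Q_Y 1_S\|^2$, the evaluation $\|Q_Y 1_S\|^2 = (n!/n^n)^2\,n^{|Y|}/(n)_{|Y|}$, and the Laplace-transform collapse of the forward difference $T_m$ to $\frac{1}{n!}\int_0^\infty u^{n-m}e^{-u}(n-u)^m\,du$ are all correct, and the saddles at $u_\pm = n(1\pm\sqrt{t})$ do produce, after combining with Stirling for $n!$ and $\binom{n}{m}$, precisely the exponent $s(t)\,n$ at the dominant saddle $u_-$ (the paper's ``$+$'' branch). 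For the first, cruder bound, your Gaussian-window computation correctly yields $\binom{n}{m}^{1/2}T_m \approx (m-1)!!/\sqrt{m!}\cdot e^{O(m^{3/2}/n^{1/2})} = O(m^{-1/4})e^{O(m^{3/2}/n^{1/2})}$, which is even a bit sharper than required.

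The one real gap, which you acknowledge, is that the second, sharper bound with the explicit prefactor $(1-\sqrt{t})^{-1}$ is only sketched: a formal saddle-point asymptotic gives an $e^{s(t)n}$ main term with a polynomial prefactor (and a \emph{favourable} factor $(1-\sqrt{t})^{1/2}$, not $(1-\sqrt{t})^{-1}$), but it is not an upper bound valid uniformly in $m/n\in(0,1)$, and controlling the secondary saddle at $u_+$, the boundary of integration, and the non-smooth point $u=n$ rigorously requires more care than ``completing the square.'' The paper sidesteps this by importing from \cite{EMM} a clean \emph{a priori} inequality valid for every $r\in(0,n)$ --- essentially a Rankin/Chernoff-style bound with no width factor --- and only then substituting $r=\sqrt{mn}$; this delivers the $(1-\sqrt{t})^{-1}$ prefactor uniformly without any saddle-point asymptotics. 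So: same first half, genuinely different second half (self-contained Laplace/saddle-point analysis versus citing a uniform test-point inequality), with your version correct in its main thrust but not fully closed at the prefactor/uniformity stage.
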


Although expressed in terms of Fourier analysis, we will show that this lemma has nothing to do with group theory, and in particular we will deduce it from the abelian case.

\begin{proof}
  We recall also from Section~\ref{subsec:argproj} the projection operators $P_X$ on $L^2(G^n)$, for each $X \subset \{1,\dots,n\}$.  By Lemma~\ref{lem:P_X-on-fourier} and Parseval we have
  \[
    \|P_X 1_S\|^2 = \sum_{\supp \rho = X} \|\hatS(\rho) \|_\HS^2 \dim \rho
  \]
  and hence
  \begin{equation} \label{eq:reducingtoEMM}
      \sum_{m\textup{-sparse}~\rho} \|\hatS(\rho)\|_\HS^2 \dim \rho = \sum_{|X| = m} \|P_X 1_S\|^2.
  \end{equation}
  However, the right-hand side does not depend on the group operation on $G$, so the first statement actually \emph{follows} from~\cite[Theorem~5.1]{EMM}.

  The more precise second bound follows from the proof of~\cite[Theorem~5.1]{EMM}. Following the notation of that proof, let the quantity in~\eqref{eq:reducingtoEMM} be equal to
  \[
  Q(m,n)n!^2/n^{2n}.
  \]
  Then for any $r$ in the range $0 < r < n$ we have
  \[
    Q(m, n) \leq \max_\pm \frac{e^{\pm r}}{(1\pm r/n)^{n-m+1}} \frac{n^m}{r^m}.
  \]
  Taking $r = (mn)^{1/2}$, and writing $t = m/n$, it follows that
  \begin{align*}
    Q(m, n)
     & \leq \max_\pm \exp \left( \pm t^{1/2} - (1-t + 1/n) \log(1 \pm t^{1/2}) - t \log (t^{1/2}) \right)^n               \\
     & \leq (1 - t^{1/2})^{-1} \max_\pm \exp \left( \pm t^{1/2} - (1-t) \log(1 \pm t^{1/2}) - t \log (t^{1/2}) \right)^n.
  \end{align*}
  The larger value is achieved by $+$, where we get $(1-t^{1/2})^{-1} e^{s(t)n}$.
\end{proof}

\subsection{An inverse theorem for \texorpdfstring{$m$}{m}-sparse representations}

In this subsection we prove the following uniform bound for the operator norm $\|\hatS(\rho)\|_\op$ for $m$-sparse $\rho$  (cf.~\cite[Lemma~4.1]{moreon}).

\begin{lemma}%
  \label{op-norm-bound}
  Let $\rho = \rho_1 \otimes \cdots \otimes \rho_n$ be an $m$-sparse representation of $G^n$, where $m \leq n/2$. Then
  \[
    \|\hatS(\rho)\|_\op \leq \binom{n}{m}^{-1/2} \frac{n!}{n^n}.
  \]
\end{lemma}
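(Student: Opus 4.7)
The plan is to peel off the trivial tensor factors of $\rho$ and rewrite the Fourier coefficient as a sum over injections. Let $X = \supp \rho$, so $|X| = m$, and regard $\rho_X = \rho_1 \otimes \cdots \otimes \rho_m$ as a representation of $G^X$ acting on $V_X = \bigotimes_{i \in X} V_i$. Since the factors of $\rho$ outside $X$ are trivial, summing over the $(n-m)!$ extensions of each injection $h \colon X \hookrightarrow G$ to a bijection $\{1,\dots,n\} \to G$ yields
\[
    \hatS(\rho) = \frac{(n-m)!}{n^n}\, A, \qquad A := \sum_{h \in S_X} \rho_X(h).
\]
The lemma is thus equivalent to the bound $\|A\|_\op \leq \sqrt{m!\,(n)_m}$.

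The first key observation is that $A$ is invariant under the diagonal $G$-action on both sides: multiplying every component of $h \in S_X$ by a common $\tau \in G$ preserves $S_X$, so $\rho_\Delta(\tau)\,A = A$ and $A\,\rho_\Delta(\tau) = A$, where $\rho_\Delta = \rho_1 \hatotimes \cdots \hatotimes \rho_m$ denotes the restriction to the diagonal copy of $G$. Consequently $A$ vanishes on the orthogonal complement of the diagonal-invariant subspace $V_X^G \subset V_X$, and $\|A\|_\op$ equals the operator norm of $A$ regarded as an endomorphism of $V_X^G$.

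Next, I would expand $A$ by M\"obius inversion on the partition lattice. Summing $\rho_X(h)$ over $\calP$-measurable $h$ factors as a tensor product of complete group sums over the cells $p \in \calP$, each equal to $n$ times the orthogonal projection $\Pi_p$ onto the $G$-invariants of $\bigotimes_{i \in p} V_i$. This yields
\[
    A = \sum_{\calP \in \Pi'_X} \mu(\calP)\, n^{|\calP|}\, \bigotimes_{p \in \calP} \Pi_p,
\]
restricted to partitions $\calP$ with no singleton cells, since $\Pi_{\{i\}} = 0$ for a nontrivial irreducible $\rho_i$.

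The main obstacle is to bound this alternating sum sharply enough. A direct triangle inequality using $\|\bigotimes_p \Pi_p\|_\op \leq 1$ gives only $\|A\|_\op \leq n^m \alpha_m(1/n)$, which by Lemma~\ref{lem:stat-phys} and Stirling's formula comes close to but does not quite match the target $\sqrt{m!\,(n)_m}$. The missing factor must come from cancellation between the projections $\bigotimes_p \Pi_p$ for different $\calP$, whose ranges overlap heavily inside the small space $V_X^G$; the hypothesis $m \leq n/2$ is precisely what makes this cancellation quantitatively effective. I would close the gap by working inside $V_X^G$ and applying Cauchy--Schwarz together with Parseval on $G^X$ to extract the remaining combinatorial factor.
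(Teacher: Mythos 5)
Your reduction of the lemma to the bound $\|A\|_\op \le \sqrt{m!\,(n)_m}$ is correct, as is the M\"obius expansion $A = \sum_{\calP\in\Pi'_X}\mu(\calP)\,n^{|\calP|}\bigotimes_{p\in\calP}\Pi_p$ and the observation that $A$ is supported on the diagonal invariants. However, the heart of the proof is missing: you correctly observe that the triangle inequality applied to the M\"obius expansion falls short of the target by a polynomial factor (roughly $m^{1/4}$ by Stirling), and then gesture at ``cancellation'' to be recovered ``by working inside $V_X^G$ and applying Cauchy--Schwarz together with Parseval,'' without supplying an argument. This is not a minor gap. Parseval controls the Hilbert--Schmidt norm, not the operator norm, and the bound it gives ($\|A\|_\HS^2 \le (n)_m n^m / \dim \rho_X$) is much too weak when $\dim\rho_X$ is small, e.g.\ when all $\rho_i$ are one-dimensional of order two, which is precisely the tight case. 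The hypothesis $m \le n/2$ does not appear to enter your sketch in any effective way.

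The paper's actual proof uses a quite different, and genuinely sharp, mechanism. Instead of expanding over all partitions at once, it uses the single identity $\sum_{x\in G}\rho_m(x)=0$ to rewrite the free sum over distinct $x_1,\dots,x_m$ as $-\sum$ over distinct $x_1,\dots,x_{m-1}$ with $x_m$ constrained to equal one of them. Exchanging the order of summation yields $\hatS(\rho)=\sum_{j=1}^{m-1}R_j$, where each $R_j$ corresponds (after decomposing $\rho_j\hatotimes\rho_m$ into irreducibles) to a scalar multiple of $\hatS(\rho')$ for some $(m-1)$- or $(m-2)$-sparse $\rho'$; crucially the direct-sum decomposition is orthogonal so each $\|R_j\|_\op$ is controlled by the inductive hypothesis. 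The triangle inequality on the $m-1$ terms then gives the recursion
\[
  \|\hatS(\rho)\|_\op \le \frac{m-1}{n-m+1}\max_{(m-1)\text{- or }(m-2)\text{-sparse }\rho'}\|\hatS(\rho')\|_\op,
\]
which, iterated down to the trivial representation, yields exactly the claimed $\binom{n}{m}^{-1/2}\frac{n!}{n^n}$ (and in fact the stronger odd/even double-factorial bound of Remark~\ref{remark-lazy}). This recursion is also exactly what gets refined (via heights and the near-orthogonality Lemma~\ref{lemma-somewhat-orthog}) to prove the inverse theorem, Theorem~\ref{op-norm-bound-inverse-theorem}, so the recursive contraction is not just a convenient trick but a structurally important one. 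I would encourage you to work out this recursion rather than pursue the partition expansion further for the operator norm.
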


We will need to know when this bound is sharp within a subexponential factor. The following ``inverse theorem'' characterizes this situation: this bound is nearly sharp only when $\rho$ is roughly $\rho_0^m \otimes 1^{n-m}$ for some one-dimensional $\rho_0$ of order two.

\begin{theorem}%
  \label{op-norm-bound-inverse-theorem}
  Suppose $\rho$ is an $m$-sparse representation of $G^n$, where $m\leq n/3$,\footnote{We could replace $n/3$ by $0.49n$ at the cost of worsening the constant $0.99$.} and suppose that no more than $(1-\eps)m$ of the nontrivial factors of $\rho$ are equal to the same one-dimensional representation $\rho_0$ of order two. Then
  \[
    \|\hatS(\rho)\|_\op \leq 0.99^{\eps m} \binom{n}{m}^{-1/2} \frac{n!}{n^n}.
  \]
\end{theorem}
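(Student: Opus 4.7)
The plan is to refine the proof of the uniform bound (Lemma~\ref{op-norm-bound}) by diagnosing precisely when it is saturated. Using the M\"obius inversion over the partition lattice developed in Section~\ref{sec:major-arcs}, one writes
\[
  \widehat{1_S}(\rho) = \frac{(n-m)!}{n^n}\sum_{\calP \in \Pi'_X}\mu(\calP)\,n^{|\calP|}\bigotimes_{p\in\calP}\Pi_p,
\]
where $X = \supp\rho$, $|X|=m$, and $\Pi_p = \int_G\bigotimes_{i\in p}\rho_i(y)\,dy$ is the orthogonal projection onto the trivial subrepresentation of $\hat\otimes_{i\in p}\rho_i$. The pairings dominate; the remaining rank-$>m/2$ partitions can be controlled by Lemma~\ref{lem:stat-phys}-type bounds, losing only an $O(m/n)$ factor that is safely absorbed into the final $0.99^{\eps m}$ savings. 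So the task reduces to estimating $\big\|\sum_{\text{pairings}}\bigotimes_p\Pi_p\big\|_\op$.

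By Schur's lemma $\Pi_{\{i,j\}}$ vanishes unless $\rho_j\cong\rho_i^*$, in which case it is a rank-one projection on $V_i\otimes V_j$. Grouping the nontrivial factors of $\rho$ by isomorphism class into blocks $X_1,\dots,X_k$ of sizes $m_1,\dots,m_k$, valid pairings either lie within a self-dual block or bijectively match two dual-paired blocks of equal size. In the extremal configuration, where every factor is the same one-dimensional representation $\rho_0$ of order two, there is a single self-dual block, $V$ is one-dimensional, every $\Pi_p$ is the scalar $1$, and the pairing sum equals $(m-1)!!$, saturating Lemma~\ref{op-norm-bound}. I would then extract exponential savings from any deviation, by handling three regimes: (a)~blocks of non-self-dual factors, which must be matched bijectively with their dual partners and hence admit only $m_j!$ pairings, exponentially fewer than the $(2m_j-1)!!$ the extremal scaling demands (with a further restriction that the partner block have matching size); (b)~self-dual blocks for a representation $\tau$ of dimension $d\ge 2$, where the $(m_j-1)!!$ pair projections $\Pi_p$ are rank-one inside a $d^2$-dimensional space, and a Weingarten-type overlap calculation $\langle v_\calP, v_{\calP'}\rangle = d^{c(\calP,\calP') - m_j/2}$ (with $c$ the number of cycles in $\calP\cup\calP'$) yields $\|\sum_\calP\bigotimes_p\Pi_p\|_\op \le (m_j-1)!!\cdot d^{-\Omega(m_j)}$; (c)~missing or mismatched dual partners, killing all valid pairings.

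Aggregating block by block, the total savings are at least $e^{-c\,m_{\text{bad}}}$, where $m_{\text{bad}}$ is the total number of nontrivial factors not equal to a common one-dimensional representation of order two; by hypothesis $m_{\text{bad}}\ge \eps m$, and choosing $c$ with $e^{-c}\le 0.99$ completes the proof. The principal obstacle will be regime (b): obtaining a sharp operator-norm bound on a sum of rank-one projections indexed by pairings whose images have large pairwise overlap. A clean reformulation would identify $\sum_\calP \bigotimes_p \Pi_p$ with a rescaled projection onto the invariant subspace $(V_\tau^{\otimes m_j})^G$, reducing matters to a Schur--Weyl- or Brauer-algebra-type dimension bound; but extracting the $d^{-\Omega(m_j)}$ gain requires careful combinatorial bookkeeping over the cycle structure of joins of pairings, and the constant in the exponent must be tracked explicitly to obtain the stated base $0.99$.
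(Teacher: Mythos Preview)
Your M\"obius-expansion route has a genuine gap at the very first reduction. The claim that the rank-$>m/2$ partitions ``lose only an $O(m/n)$ factor'' is false once $m$ exceeds roughly $n^{1/2}$. Already at rank $m/2+1$ (one $4$-cell plus pairs, or two $3$-cells plus pairs) the triangle-inequality contribution relative to the pairing term is of order $m^2/n$, not $m/n$; summing over ranks as in Lemma~\ref{lem:stat-phys} the total is $\exp\big(\phi(m/n)\,m\big)$ with $\phi(\theta)\sim \theta^{1/2}/3$, i.e.\ a loss of $\exp\big(O(m^{3/2}/n^{1/2})\big)$. For $m$ near $n/3$ this is $e^{\Theta(n)}$, obliterating any $0.99^{\eps m}$ gain you extract on the pairing block. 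The M\"obius expansion is an exact identity, so there is real cancellation between pairing and higher-rank terms; the triangle inequality discards it, and your decomposition gives no mechanism for recovering it.

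The paper never expands into $\Pi'_X$. Instead it runs the one-step recursion from the proof of Lemma~\ref{op-norm-bound} (use $\sum_x \rho_m(x)=0$, split into the operators $R_j$) but tracks a \emph{height} parameter: the number of such steps needed to reach the extremal form $\rho_0^m\otimes 1^{n-m}$. Proposition~\ref{recurrence-proposition} bounds $F(m,h)=\max_{\text{height}\ge h}\|\widehat{1_S}(\rho)\|_\op$ by a recurrence in $(m,h)$ with a weight $\theta$ bounded strictly below $1$; the self-dual higher-dimensional case is handled by the two-projection overlap bound of Lemma~\ref{lemma-somewhat-orthog} (this is the paper's substitute for your Weingarten calculation, but applied pairwise at each recursive step rather than globally over all pairings). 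Induction gives $F(m,h)\le 0.98^h\binom{n}{m}^{-1/2}n!/n^n$ (Corollary~\ref{cor:Fmh-bound}), and the hypothesis on $\rho$ forces height $\ge \eps m/2$. Because the recursion reduces $m$ by one or two at each step and never unfolds the full partition lattice, it never incurs the $\exp(m^{3/2}/n^{1/2})$ loss.
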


We begin with the proof of Lemma~\ref{op-norm-bound}. Assume for notational convenience that $\rho = \rho_1 \otimes \cdots \otimes \rho_m \otimes 1^{n-m}$, where $\rho_i\colon G \to U(V_i)$ ($1 \leq i \leq m$) are nontrivial irreducible representations of $G$ (permuting the factors does not affect the operator norm). Then $\hatS(\rho)$ is an operator on $V_1 \otimes \cdots \otimes V_m$, and by definition
\[
  \hatS(\rho)
  = \frac{(n-m)!}{n^n} \sum_{\substack{x_1, \dots, x_m\\\text{distinct}}} \rho_1(x_1) \otimes \cdots \otimes \rho_m(x_m).
\]
Since $\rho_m$ is nontrivial and irreducible, $\sum_x \rho_m(x) = 0$, so
\begin{equation}\label{section-6-method-step-one}
  \hatS(\rho)
  = -\frac{(n-m)!}{n^n} \sum_{\substack{x_1, \dots, x_{m-1}\\ \text{distinct}}} \sum_{x_m \in \{x_1, \dots, x_{m-1}\}} \rho_1(x_1) \otimes \cdots \otimes \rho_m(x_m).
\end{equation}
Exchanging the order of summation, we write $\hatS(\rho) = \sum_{j=1}^{m-1} R_j$ where
\[
  R_j = -\frac{(n-m)!}{n^n} \sum_{\substack{x_1, \dots, x_{m-1}\\ \text{distinct}}} \rho_1(x_1) \otimes \cdots \otimes \rho_{m-1}(x_{m-1}) \otimes \rho_m(x_j)
\]
for $1 \le j \le m-1$. For simplicity consider $R_{m-1}$. In equivalent notation,
\begin{equation} \label{section-6-method-one-term}
  R_{m-1} = -\frac{(n-m)!}{n^n} \sum_{\substack{x_1, \dots, x_{m-1} \\ \text{distinct}}} \rho_1(x_1) \otimes \cdots \otimes (\rho_{m-1} \hatotimes \rho_m)(x_{m-1}).
\end{equation}
We can decompose $\rho_{m-1} \hatotimes \rho_m$ as an orthogonal direct sum of irreducible representations:
\[
  \rho_{m-1} \hatotimes \rho_m = \sigma_1 \oplus \cdots \oplus \sigma_k,
\]
and correspondingly $R_{m-1} = \bigoplus_{r=1}^k R_{m-1, \sigma_r}$ where
\[
  R_{m-1,\sigma_r} = -\frac{(n-m)!}{n^n} \sum_{\substack{x_1, \dots, x_{m-1} \\ \text{distinct}}} \rho_1(x_1) \otimes \cdots \otimes \sigma_r(x_{m-1}).
\]
We observe that $R_{m-1,\sigma_r}$ is essentially the same as $-\frac1{n-m+1} \hatS(\rho_1 \otimes \cdots \otimes \rho_{m-2} \otimes \sigma_r)$, and certainly these have the same operator norm.
Since the direct sum above is orthogonal, and since $\|R\oplus S\|_\op  = \max(\|R\|_\op, \|S\|_\op)$, we have
\[
  \|R_{m-1}\|_\op
  = \max_{1 \leq r \leq k} \frac{1}{n-m+1} \|\hatS(\rho_1 \otimes \cdots \otimes \rho_{m-2} \otimes \sigma_r)\|_\op.
\]
Note that $\rho_1 \otimes \cdots \otimes \rho_{m-2} \otimes \sigma_r$ is either $(m-1)$- or $(m-2)$-sparse, depending on whether $\sigma_r$ is trivial.

The situation for other $R_j$ is identical up to permuting factors.  Applying the triangle inequality to $\hatS(\rho) = \sum_{j=1}^{m-1} R_j$, we deduce
\[
  \|\hatS(\rho)\|_\op \leq
  \frac{m-1}{n-m+1} \max_\text{$(m-1)$- or $(m-2)$-sparse $\rho'$} \|\hatS(\rho')\|_\op.
\]
The claimed bound (which is monotonic in $m$ for $m\leq n/2$) follows from this by induction, with the base cases $\hatS(1^n) = n!/n^n$ and $\hatS(\rho_1 \otimes 1^{n-1}) = 0$.

\begin{remark}%
  \label{remark-lazy}
  We are being a little lazy with the form of the bound. The same recurrence actually proves
  \[
    \|\hatS(\rho)\|_\op \leq \frac{(m-1) (m-3) \cdots 1}{(n-m+1) (n-m+3) \cdots (n-1)} \cdot \frac{n!}{n^n}
  \]
  when $m$ is even, and
  \[
    \|\hatS(\rho)\|_\op \leq \frac{m-1}{n-m+1} \cdot \frac{(m-2) (m-4) \cdots 1}{(n-m+2) (n-m+4) \cdots (n-1)} \cdot \frac{n!}{n^n}
  \]
  when $m$ is odd.
\end{remark}

\def\im{\operatorname{im}}

In order to prove Theorem~\ref{op-norm-bound-inverse-theorem}, we re-examine the above proof.  Given $\rho = \rho_1 \otimes \dots \otimes \rho_m \otimes 1^{n-m}$, and given indices $1 \le i < j \le m$ and $\sigma$ an irreducible component of $\rho_i \hatotimes \rho_j$, we write
\[
  \rho'_{i,j,\sigma} = \rho_1 \otimes \dots \otimes \sigma \otimes \dots \otimes 1 \otimes \dots \otimes \rho_m \otimes 1^{n-m}
\]
for the representation obtained by replacing $\rho_i$ with $\sigma$ and $\rho_j$ with the trivial representation.  (In the above, we always permute factors so that $i=m$.)

Two potentially weak inequality steps in the above proof are (i) pessimistically assuming that $\rho'_{i,j,\sigma}$ is $(m-2)$-sparse rather than $(m-1$)-sparse when applying the recursive bound, and (ii) using the triangle inequality on $\left\|\sum_{j=1}^{m-1} R_j\right\|_\op$.

These and all other steps in the proof are sharp when each of the $m$ nontrivial factors $\rho_i$ is equal to the same one-dimensional $\rho_0$ of order two: in this case the representation $\sigma_i$ is always the trivial representation, so the sparsity is indeed $m-2$, and because the representation is one-dimensional the triangle inequality is sharp. Thus
\[
  \|\hatS(\rho)\|_\op = \frac{(m-1)(m-3) \cdots 1}{(n-m+1)(n-m+3) \cdots (n-1)} \frac{n!}{n^n}
\]
for such $\rho$.

Let us say that $\rho$ has \emph{height} $h$ if it takes $h$ iterations of the recursion in the proof of Lemma~\ref{op-norm-bound} to get to a representation of the above form. In other words,
\begin{enumerate}
  \item $\rho$ has height zero if, up to permutation of factors, $\rho = \rho_0^m \otimes 1^{n-m}$ for some even $m$ and some one-dimensional $\rho_0$ of order two;
  \item $\rho = \rho_1 \otimes \dots \otimes \rho_m \otimes 1^{n-m}$ has height at most $h$ if one can pick indices $1 \le i < j \le m$ and an irreducible component $\sigma$ of $\rho_i \hatotimes \rho_j$, such that $\rho'_{i,j,\sigma}$ has
        height at most $h-1$.
\end{enumerate}
Note that $\rho$ has finite height if and only if $\rho_1 \hatotimes \cdots \hatotimes \rho_n$ contains a copy of the trivial representation, i.e., if and only if $\langle \chi_1 \cdots \chi_n, 1 \rangle \neq 0$. If $\rho$ does not have finite height then $\hatS(\rho) = 0$.

Using height for bookkeeping, we will use the idea of the proof of Lemma~\ref{op-norm-bound} to prove the following recurrence.

\begin{proposition}%
  \label{recurrence-proposition}
  Let $F(m,h)$ be the maximum value of $\|\hatS(\rho)\|_\op$ over all $m$-sparse $\rho$ of height at least $h$. If $m \geq 2$ and $h>0$ then
  \[
    F(m, h) \leq \frac{m-1}{n-m+1} \max\begin{cases}
      F(m-1, h-1) \\
      (1-\theta) F(m-1, h-1) + \theta F(m-2, h-1),
    \end{cases}
  \]
  where
  \[
    \theta = \max \left(
    \frac{m}{2m-2},
    \left(\frac1{m-1} \left(1 + \frac{m-2}{d}\right)\right)^{1/2}
    \right) ,
  \]
  and where $d$ is the minimal dimension of a non-one-dimensional self-dual representation of $G$.
\end{proposition}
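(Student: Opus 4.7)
The plan is to induct on $h$, refining the argument behind Lemma~\ref{op-norm-bound}. Given an $m$-sparse $\rho = \rho_1 \otimes \cdots \otimes \rho_m \otimes 1^{n-m}$ of height at least $h > 0$, after permuting factors I designate one nontrivial factor as the \emph{pivot} (call it $\rho_m$) and run the same expansion as in \eqref{section-6-method-step-one}, writing
\[
  \hatS(\rho) = \sum_{j=1}^{m-1} R_j.
\]
Each $R_j$ admits a block-diagonalization $R_j = \bigoplus_\sigma R_{j,\sigma}$ indexed by the irreducible summands $\sigma$ of $\rho_j \hatotimes \rho_m$, where $R_{j,\sigma}$ equals, up to the factor $-(n-m+1)^{-1}$, the operator $\hatS(\rho'_{j,m,\sigma})$ on the appropriate isotypic component. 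Since $\rho$ has height at least $h$, each $\rho'_{j,m,\sigma}$ has height at least $h-1$; it is $(m-1)$-sparse if $\sigma$ is nontrivial and $(m-2)$-sparse if $\sigma$ is the trivial summand (which forces $\rho_j \cong \rho_m^*$). The inductive hypothesis therefore bounds $\|R_{j,\sigma}\|_\op$ by $(n-m+1)^{-1} F(m-1,h-1)$ or $(n-m+1)^{-1} F(m-2,h-1)$ respectively.

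The two branches of the $\max$ correspond to two strategies for the pivot. In the favourable case, I can choose $\rho_m$ so that $\rho_j \not\cong \rho_m^*$ for every $j < m$; then every $\sigma$ that arises is nontrivial, so $\|R_j\|_\op \le (n-m+1)^{-1} F(m-1,h-1)$, and the plain triangle inequality delivers the first branch $\frac{m-1}{n-m+1} F(m-1,h-1)$. In the unfavourable case no pivot can dodge every dual partner, and for the offending $j$'s the block $R_{j,\textup{triv}}$ carries the worse $(m-2)$-sparse bound while $R_{j,\textup{rest}}$ is still governed by $F(m-1,h-1)$. Aggregating with the triangle inequality, together with an orthogonality argument that distributes the weight between the two block-types, yields a convex-combination bound $(1-\theta) F(m-1,h-1) + \theta F(m-2,h-1)$; taking the $\max$ over the two strategies then suffices to cover every $\rho$ uniformly.

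The two expressions defining $\theta$ correspond to two independent upper bounds on the worst-case weight that the $(m-2)$-sparse branch can command. The combinatorial term $m/(2m-2)$ comes from counting how many $j < m$ can simultaneously be dual partners of the pivot once we optimize the pivot choice; a direct book-keeping over the dual-pair structure of $\rho_1,\dots,\rho_m$ caps the offending fraction by $m/(2m-2)$ in the worst case. The dimensional term uses that when $\rho_m$ is non-one-dimensional and self-dual, the trivial isotypic in $\rho_m^* \hatotimes \rho_m$ has relative weight $1/(\dim \rho_m)^2 \le 1/d^2$, so a Cauchy--Schwarz split of $\|R_j\|_\op$ against $\|R_{j,\textup{triv}}\|_\HS$ and $\|R_{j,\textup{rest}}\|_\HS$, combined with the orthogonality of the isotypic blocks across different $j$, limits the trivial-block share to roughly $\bigl(\tfrac{1}{m-1}(1 + \tfrac{m-2}{d})\bigr)^{1/2}$. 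The main obstacle is the precise bookkeeping in this second bound: one must interleave the operator-norm triangle inequality on $\sum_j R_j$ with second-moment estimates on the trivial blocks and track carefully which quantities pick up dimension factors, in order to extract $\theta$ in exactly the advertised form rather than a weaker variant.
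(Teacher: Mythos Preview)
Your skeleton---choose a pivot $\rho_m$, expand $\hatS(\rho)=\sum_{j} R_j$, block-diagonalize each $R_j$ over irreducible summands $\sigma$ of $\rho_j\hatotimes\rho_m$, and separate the trivial-$\sigma$ blocks (sparsity drops by $2$) from the rest (drops by $1$)---matches the paper. But two essential ingredients are either missing or misidentified.

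First, the case split. The correct dichotomy is not ``some pivot has zero dual partners vs.\ not''; it is: either (1) some factor is dual to \emph{at most $m/2$} of the remaining $m-1$ factors, or (2) every factor is dual to more than half of the others, which forces all factors equal to a common self-dual $\rho_0$ (and $\dim\rho_0\ge 2$, since $h>0$ rules out the one-dimensional order-two case). In case~(1), choosing that factor as pivot and applying the plain triangle inequality gives the convex combination with weight $\le (m/2)/(m-1)=m/(2m-2)$ on $F(m-2,h-1)$. Your ``direct book-keeping over the dual-pair structure'' does not supply this; without the dichotomy there is no reason the offending fraction is capped at $m/(2m-2)$ rather than $1$.

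Second, and more seriously, the dimensional bound in case~(2) does not go through ``$1/d^2$ relative weight'' or any HS-norm split. The engine is Lemma~\ref{lemma-somewhat-orthog}: for self-dual $\rho_0$ of dimension $d$, unit vectors lying in $\im\Phi_i$ and $\im\Phi_j$ (the projections onto the $G$-trivial parts of $V_i\otimes V_m$ and $V_j\otimes V_m$, for $i\ne j$) have inner product at most $1/d$. One fixes a unit test vector $v$, uses the orthogonal block decomposition to get $\|R_j v\|^2\le F(m-2,h-1)^2\|\Phi_j v\|^2+F(m-1,h-1)^2(1-\|\Phi_j v\|^2)$, applies the triangle inequality and Cauchy--Schwarz over $j$ to obtain $(m-1)\big(\theta F(m-2,h-1)^2+(1-\theta)F(m-1,h-1)^2\big)^{1/2}$ with $\theta=\frac{1}{m-1}\sum_j\|\Phi_j v\|^2$, and then the near-orthogonality lemma bounds $\sum_j\|\Phi_j v\|^2\le 1+(m-2)/d$. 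A final elementary step, $((1-\theta)x^2+\theta y^2)^{1/2}\le(1-\theta^{1/2})x+\theta^{1/2}y$ for $0\le x\le y$, converts the $\ell^2$-mean to the affine form and explains the square root in the statement's $\theta$; the first branch $F(m-1,h-1)$ of the $\max$ is there precisely to absorb the case $x>y$. Neither the near-orthogonality lemma nor this $\ell^2$-to-affine conversion appears in your sketch, and the HS-norm route you describe will not recover the stated $\theta$.
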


We recall some representation-theoretic preliminaries.  For a representation $U$ of $G$, we denote by $U^G$ the $G$-invariant subspace of $U$.  The case $\sigma = 1$ in the above discussion (which is of interest as this is when the sparsity decreases by $2$) corresponds to considering the subspaces $(V_i \otimes V_j)^G$.

For given representations $U, V$, there is a natural correspondence between $U \otimes V$ and the space of linear maps $U^* \to V$, where $U^\ast$ is the linear dual (identifying $u \otimes v$ with the map $\psi \mapsto \psi(u) v$).  The subspace $(U \otimes V)^G$ corresponds to the $G$-equivariant maps $U^\ast \to V$.  If $U,V$ are irreducible, by Schur's lemma the space of $G$-equivariant maps $U^\ast \to V$ has dimension $1$ if $U^\ast$ and $V$ are isomorphic as $G$-representations (spanned by such an isomorphism) and is zero otherwise.  If $V = U^\ast$, the element of $(U \otimes V)^G$ corresponding to the identity may be written explicitly as
\[
  \sum_{i=1}^d u_i \otimes u_i^\ast
\]
where $u_1,\dots,u_d$ is any basis for $U$ and $u_1^\ast,\dots,u_d^\ast$ is the dual basis.

It follows that the case $\sigma = 1$ can only arise if $V_i$ and $V_j$ are dual to each other.  In this case, we will need the following lemma, which can be interpreted as saying that the subspaces of $V_1 \otimes \dots \otimes V_m$ induced by $(V_i \otimes V_j)^G$ are somewhat orthogonal across different choices of $j$.

\begin{lemma}%
  \label{lemma-somewhat-orthog}
  Let $\rho\colon G \to U(V)$ be an irreducible representation of $G$.
  Suppose $v,w \in V \otimes V^\ast \otimes V$ are unit vectors such that
  \[
    v \in (V \otimes V^\ast)^G \otimes V
  \]
  and
  \[
    w \in V \otimes (V^\ast \otimes V)^G.
  \]
  Then $|\langle v, w \rangle| \le 1/\dim V$.
\end{lemma}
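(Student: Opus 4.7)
The plan is to unwind the explicit Schur-theoretic description of the two one-dimensional invariant subspaces involved and then compute the inner product directly. As recalled in the paragraph preceding the statement, $(V \otimes V^\ast)^G$ is one-dimensional (spanned by the element corresponding to the identity $V^\ast \to V^\ast$ under the natural isomorphism $V \otimes V^\ast \cong \opr{End}(V^\ast)$), and an explicit generator is $\sum_{i=1}^d e_i \otimes e_i^\ast$ for any orthonormal basis $e_1,\dots,e_d$ of $V$ with dual basis $e_1^\ast,\dots,e_d^\ast$. A direct computation shows this vector has Hilbert--Schmidt norm $d^{1/2}$, so a unit generator is $d^{-1/2}\sum_i e_i \otimes e_i^\ast$. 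Similarly a unit generator of $(V^\ast \otimes V)^G$ is $d^{-1/2}\sum_j e_j^\ast \otimes e_j$.

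Given this, I would write $v = d^{-1/2}\bigl(\sum_i e_i \otimes e_i^\ast\bigr) \otimes u$ and $w = u' \otimes d^{-1/2}\bigl(\sum_j e_j^\ast \otimes e_j\bigr)$ for some unit vectors $u,u' \in V$ (whose existence forces the conclusion that $v$ and $w$ are unit vectors). Expanding $u = \sum_k u_k e_k$ and $u' = \sum_\ell u'_\ell e_\ell$ in the orthonormal basis $\{e_i \otimes e_j^\ast \otimes e_k\}$ of $V \otimes V^\ast \otimes V$ yields
\[
  \langle v, w\rangle
  = d^{-1} \sum_{i,j,k,\ell} u_k \overline{u'_\ell}\,
    \langle e_i \otimes e_i^\ast \otimes e_k,\ e_\ell \otimes e_j^\ast \otimes e_j\rangle
  = d^{-1} \sum_i u_i \overline{u'_i}
  = d^{-1} \langle u, u'\rangle,
\]
where the three Kronecker deltas $\delta_{i\ell}\delta_{ij}\delta_{kj}$ collapse the quadruple sum to a single sum over $i$. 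The conclusion $|\langle v,w\rangle| \le 1/d$ then follows from Cauchy--Schwarz.

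I do not expect any real obstacle: the lemma is essentially a one-line computation once the two invariant vectors are written out explicitly. The only sanity check worth doing carefully is the normalisation of $\sum_i e_i \otimes e_i^\ast$, which depends on the convention identifying $V^\ast$ isometrically with the conjugate of $V$; with the standard conventions of Section~\ref{sec:preliminaries} this gives norm $d^{1/2}$ as claimed.
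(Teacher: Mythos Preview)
Your proof is correct and essentially identical to the paper's: the paper writes $v = \bigl(\sum_i u_i \otimes u_i^\ast\bigr) \otimes v'$ without normalizing the invariant element (so $\|v'\| = d^{-1/2}$) and computes $\langle v,w\rangle = \langle v',w'\rangle$, which is exactly your $d^{-1}\langle u,u'\rangle$ after the rescaling $v' = d^{-1/2}u$. The only cosmetic difference is where the factor of $d^{-1/2}$ is absorbed.
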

\begin{proof}
  Let $u_1,\dots,u_d$ be any orthonormal basis for $V$ where $d=\dim V$.  By the discussion above, we may write
  \[
    v = \left(\sum_{i=1}^d u_i \otimes u_i^\ast \right) \otimes v'
  \]
  and
  \[
    w = w' \otimes \left(\sum_{i=1}^d u_i^\ast \otimes u_i \right)
  \]
  for some $v',w' \in V$.  Then
  \[
    \|v\|^2 = 1 = \sum_{i=1}^d \|u_i \otimes u_i^\ast \otimes v'\|^2 = d \|v'\|^2
  \]
  so $\|v'\| = 1/\sqrt{d}$, and similarly $\|w'\| = 1/\sqrt{d}$.  But
  \begin{align*}
    \langle v, w \rangle = & \sum_{i,j=1}^d \langle u_i, w' \rangle\, \langle u_i^\ast, u_j^{\ast} \rangle\, \langle v', u_j \rangle \\
    =                      & \sum_{i=1}^d \langle v', u_i \rangle\,\langle u_i, w' \rangle                                           \\
    =                      & \langle v', w' \rangle
  \end{align*}
  and so $|\langle v, w \rangle| \le \|v'\|\,\|w'\| = 1/d$ as claimed.
\end{proof}

\begin{proof}[Proof of Proposition~\ref{recurrence-proposition}]
  Suppose $\rho$ has sparsity $m \geq 2$ and height $h>0$. We may assume $\rho = \rho_1 \otimes \cdots \otimes \rho_m \otimes 1^{n-m}$, where each $\rho_i$ is nontrivial. Since $\rho$ has positive height, one of the following alternatives holds:
  \begin{enumerate}
    \item there is some nontrivial factor, without loss of generality $\rho_m$, that is dual to at most $m/2$ of the other factors $\rho_i$;
    \item $\rho = \rho_0^m \otimes 1^{n-m}$ for some self-dual $\rho_0$ of dimension at least $2$.
  \end{enumerate}

  Assume (1) holds. Then we proceed as in the proof of Lemma~\ref{op-norm-bound}. Let $A \subset \{1,\dots,m-1\}$ be the indices $j$ such that $\rho_j \cong \rho_m^\ast$; so $|A| \le m/2$.  The tensor product $\rho_j \hatotimes \rho_m$ ($1 \leq j \leq m-1$) contains a trivial component only if $j \in A$, so in the language of the proof of Lemma~\ref{op-norm-bound},
  \[
    \|R_j\|_\op \le \frac1{n-m+1} \begin{cases} F(m-2, h-1) &\colon j \in A \\ F(m-1,h-1) &\colon j \notin A. \end{cases}
  \]
  Applying the triangle inequality to $\hatS(\rho) = \sum_{j=1}^{m-1} R_j$,
  \[
    \|\hatS(\rho)\|_\op
     \leq \frac{m-1}{n-m+1} \left( (1-\theta) F(m-1, h-1) + \theta F(m-2, h-1) \right)
  \]
  for some $\theta \leq (m/2) / (m-1)$.

  Now assume (2) holds, for say $\rho_0 : G \to U(V_0)$. Thus $V_1, \dots, V_m \cong V_0$. For $1 \leq j \leq m-1$, let $\Phi_j$ be the projection from $V_1 \otimes \cdots \otimes V_m$ to the subspace obtained by replacing $V_j \otimes V_m$ with its $G$-invariant subspace $(V_j \otimes V_m)^G$.  By Lemma~\ref{lemma-somewhat-orthog}, for any $1 \le i < j \le m-1$, if $u \in \im \Phi_i$ and $w \in \im \Phi_j$ then\footnote{It is straightforward to show that if $U, U'$ are subspaces with an upper bound on inner products of this form, then $U \otimes W$ and $U' \otimes W$ obey the same bound for any inner product space $W$.}
  \[
    |\langle u, w \rangle|  \le \|u\| \|w\| /\dim V_0.
  \]
  Again in the language of Lemma~\ref{op-norm-bound}, recall that
  \[
    \hatS(\rho)
    = \sum_{j=1}^{m-1} \bigoplus_\sigma R_{j,\sigma},
  \]
  where the direct sum runs over irreducible components $\sigma$ of $\rho_j \hatotimes \rho_m$, and the operator $R_{j,\sigma}$ acts like $-\frac1{n-m+1} \hatS(\rho_1 \otimes \cdots \otimes \widehat{\rho_j} \otimes \cdots \otimes \rho_{m-1} \otimes \sigma)$. Note that $R_{j,1} = R_{j,1} \Phi_j$, and $R_{j, \sigma} \Phi_j = 0$ for $\sigma$ nontrivial. Thus for a unit vector $v \in V_1 \otimes \dots \otimes V_m$,
  \begin{align*}
    \left\| R_j v \right\|^2
     & = \sum_\sigma \|R_{j,\sigma} v\|^2                                                                       \\
     & = \|R_{j,1} \Phi_j v\|^2 + \sum_{\sigma \ne 1} \|R_{j,\sigma} (1 - \Phi_j) v\|^2                         \\
     & \leq \|R_{j,1}\|_\op^2 \|\Phi_j v\|^2 + \max_{\sigma\ne 1} \|R_{j,\sigma}\|_\op^2 (1 - \|\Phi_j v \|^2).
  \end{align*}
  Note that
  \[
    \|R_{j,1}\|_\op \leq \frac1{n-m+1} F(m-2, h-1)
  \]
  and
  \[
    \max_{\sigma \ne 1} \|R_{j,\sigma}\|_\op \leq \frac1{n-m+1} F(m-1, h-1).
  \]
  Hence
  \[
    \|R_j v\| \leq \frac1{n-m+1} \left( F(m-2, h-1)^2 \|\Phi_j v\|^2 + F(m-1, h-1)^2 (1 - \|\Phi_j v\|^2) \right)^{1/2},
  \]
  and
  \begin{align*}
     & \| \hatS(\rho)\, v \| \le \frac1{n-m+1} \cdot                                                                      \\
     & \qquad \sum_{j=1}^{m-1} \left(F(m-2, h-1)^2 \, \|\Phi_j v\|^2  + F(m-1,h-1)^2 (1 - \|\Phi_j v \|^2) \right)^{1/2}.
  \end{align*}
  Applying Cauchy--Schwarz and rearranging gives
  \begin{equation} \label{almost-final-hats-bound}
    \| \hatS(\rho)\, v \| \le \frac{m-1}{n-m+1} \left( \theta F(m-2,h-1)^2 + (1 - \theta)  F(m-1, h-1)^2 \right)^{1/2},
  \end{equation}
  where
  \[
    \theta = \frac1{m-1} \sum_{j=1}^{m-1} \| \Phi_j v \|^2.
  \]

  To bound $\theta$, note that if $u_1,\dots,u_{m-1}$ are vectors with $u_j \in \im \Phi_j$, then
  \[
    \left\| \sum_{j=1}^{m-1} u_j \right\|^2 = \sum_{j} \|u_j\|^2 + \sum_{i \ne j} \langle u_i, u_j \rangle,
  \]
  which by Lemma~\ref{lemma-somewhat-orthog} and the AM--GM inequality is bounded by
  \[
    \sum_{j} \|u_j\|^2 + \sum_{i \ne j} \frac1{\dim V_0} \|u_i\| \|u_j\| \le \left(1 + \frac{m-2}{\dim V_0}\right) \sum_{j=1}^{m-1} \|u_j\|^2.
  \]
  In other words, the map $\bigoplus_{j=1}^{m-1} \im \Phi_j \to V_1 \otimes \dots \otimes V_m$ sending $(u_1,\dots,u_{m-1}) \mapsto u_1+\cdots+u_{m-1}$ has operator norm at most $\left(1 + \frac{m-2}{\dim V_0}\right)^{1/2}$, and hence so does its adjoint, which is the map $v \mapsto \big(\Phi_j v)_{j=1}^{m-1}$.  It follows that
  \[
    \sum_{j=1}^{m-1} \|\Phi_j v\|^2 \le \left(1 + \frac{m-2}{\dim V_0}\right) \|v\|^2
  \]
  and so
  \[
    \theta \le \frac1{m-1} \left(1 + \frac{m-2}{\dim V_0}\right).
  \]

  Finally, note the inequality
  \[
    ((1 - \theta) x^2 + \theta y^2)^{1/2} \leq (1-\theta^{1/2}) x + \theta^{1/2} y
  \]
  for $0 \leq x \leq y$ and $\theta \in [0,1]$. Indeed we have
  \begin{align*}
    (1-\theta) x^2 + \theta y^2
    &= (1-\theta^{1/2})(1+\theta^{1/2}) x^2 + \theta y^2 \\
    &\leq (1-\theta^{1/2})^2 x^2 + 2\theta^{1/2} (1-\theta^{1/2}) xy + \theta y^2 \\
    &= \left((1-\theta^{1/2}) x + \theta^{1/2} y\right)^2.
  \end{align*}
  This completes the proof.
\end{proof}

\begin{corollary}%
  \label{cor:Fmh-bound}
  For $m \leq n/3$ we have
  \[
    F(m,h) \leq 0.97^h \binom{n}{m}^{-1/2} \frac{n!}{n^n}.
  \]
\end{corollary}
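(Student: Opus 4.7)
The plan is to prove the bound by induction on $h$. The base case $h=0$ is just Lemma~\ref{op-norm-bound} (and the cases $m\in\{0,1\}$ are trivial at every $h\geq 1$ since $1^n$ is not of positive height and $\hatS(\rho)=0$ for $1$-sparse $\rho$). For the inductive step with $m\geq 2$, I would feed the inductive hypothesis into the recurrence from Proposition~\ref{recurrence-proposition}: this immediately yields
\[
  F(m,h) \leq 0.98^{h-1}\,\frac{n!}{n^n}\,\frac{m-1}{n-m+1}\,\max\!\left(\binom{n}{m-1}^{-1/2},\ (1-\theta)\binom{n}{m-1}^{-1/2}+\theta\binom{n}{m-2}^{-1/2}\right).
\]
Using the identities $\binom{n}{m}/\binom{n}{m-1}=(n-m+1)/m$ and $\binom{n}{m}/\binom{n}{m-2}=(n-m+1)(n-m+2)/(m(m-1))$, the desired inequality reduces to a purely numerical claim: setting
\[
  \alpha=\frac{m-1}{\sqrt{m(n-m+1)}},\qquad \beta=\sqrt{\frac{(m-1)(n-m+2)}{m(n-m+1)}},
\]
it suffices to verify $\max(\alpha,\, (1-\theta)\alpha+\theta\beta)\leq 0.98$ uniformly in $m\leq n/3$.

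The first branch is easy: for $m\leq n/3$ one has $\alpha^2\leq m/(2m+1)<1/2$, so $\alpha<1/\sqrt{2}<0.98$. For the second branch I would first show the clean upper bound $\theta\leq \sqrt{m/(2(m-1))}$ valid for all $m\geq 2$ and $d\geq 2$ — the first factor in the definition of $\theta$ obeys $m/(2(m-1))\leq \sqrt{m/(2(m-1))}$ for $m\geq 2$, and the second factor is $\sqrt{(1+(m-2)/d)/(m-1)}\leq \sqrt{m/(2(m-1))}$ precisely because $d\geq 2$. I would also note $\beta^2=(1-1/m)(1+1/(n-m+1))\leq 1$ for $m\leq n/2$.

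Combining these ingredients and splitting on $m$ gives the result. For $m\geq 3$ one plugs $\alpha\leq 1/\sqrt{2}$, $\beta\leq 1$, and $\theta\leq 1/\sqrt{2}$ (the asymptotic worst value of $\sqrt{m/(2(m-1))}$) into $(1-\theta)\alpha+\theta\beta$ to obtain the asymptotically sharp bound $\sqrt{2}-1/2\approx 0.914<0.98$; for smaller $\theta$ or smaller $\beta$ the expression is even less. The case $m=2$ must be handled separately, since then $\theta=1$: here the bound becomes $\beta=\sqrt{n/(2(n-1))}$, and the constraint $n\geq 3m=6$ forces $\beta\leq \sqrt{6/10}<0.78$.

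The main technical obstacle is the last numerical verification: the constant $0.98$ has only modest slack over the asymptotic worst value $\sqrt{2}-1/2$, so one cannot simply throw away $\theta$ or $\alpha$. In particular, the bound $\theta\leq \sqrt{m/(2(m-1))}$ (which genuinely uses $d\geq 2$) is needed for $m\geq 3$, and the separate treatment of $m=2$ (where $\theta=1$) relies crucially on the fact that $\beta$ is bounded away from $1$ whenever $n\geq 6$. Once these ingredients are in place, closing the induction is automatic.
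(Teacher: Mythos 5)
Your overall structure matches the paper's: induction on height $h$, base case $h=0$ from Lemma~\ref{op-norm-bound}, and the recurrence of Proposition~\ref{recurrence-proposition} for the inductive step. Your translation into the quantities $\alpha$, $\beta$ and the reduction to $\max(\alpha,\,(1-\theta)\alpha + \theta\beta) \leq 0.98$ is correct, as are the bounds $\alpha \leq 1/\sqrt{2}$, $\beta \leq 1$, $\theta \leq \sqrt{m/(2(m-1))}$, and the separate treatment of $m=2$.

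However, there is a genuine error in your final numerical step for $m \geq 3$. You assert $\theta \leq 1/\sqrt{2}$, citing this as ``the asymptotic worst value of $\sqrt{m/(2(m-1))}$''. But $m/(2(m-1)) = \tfrac12 \cdot \tfrac{m}{m-1}$ is a \emph{decreasing} function of $m$: its limit as $m\to\infty$ is $1/2$, which is the \emph{infimum}, not the supremum. In fact $\sqrt{m/(2(m-1))} > 1/\sqrt{2}$ for every finite $m$, so the claimed bound $\theta \leq 1/\sqrt{2}$ is false for all $m$. The correct worst case over $m\geq 3$ is at $m=3$, giving $\theta \leq \sqrt{3}/2 \approx 0.866$, which is the bound the paper uses (``$\theta \leq (3/4)^{1/2}$''). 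Since $\beta > \alpha$, the expression $(1-\theta)\alpha + \theta\beta$ is increasing in $\theta$, so this error is not innocuous: your quoted numerical value $\sqrt{2}-1/2 \approx 0.914$ is an underestimate. With the correct $\theta \leq \sqrt{3}/2$ one gets approximately $(1-\sqrt{3}/2)/\sqrt{2} + \sqrt{3}/2 \approx 0.961$, which is still $< 0.98$, so the corollary does hold by this route—but you should replace the asymptotic value of $\theta$ with its actual maximum on the range $m\geq 3$ to make the argument correct.
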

\begin{proof}
We use the previous proposition and induction on height. The case $h = 0$ follows from Lemma~\ref{op-norm-bound}. If $m = 2$ and $h=1$, from Remark~\ref{remark-lazy} we have
\[
  \|\hatS(\rho)\|_\op \leq \frac1{n-1} \frac{n!}{n^n} = \frac{(n/(n-1))^{1/2}}{2^{1/2}} \binom{n}{2}^{-1/2} \frac{n!}{n^n} \le 0.78 \binom{n}{2}^{-1/2} \frac{n!}{n^n}
\]
(as $n \ge 6$). Hence assume $h \geq 1$, $m\geq 3$. Then by the previous proposition and the inductive hypothesis we have
\begin{align*}
  F(m, h) &\leq \frac{m-1}{n-m+1} \left( (1-\theta) \binom{n}{m-1}^{-1/2} + \theta \binom{n}{m-2}^{-1/2} \right) 0.97^{h-1} \frac{n!}{n^n} \\
  &= \frac{m-1}{n-m+1} \left( (1-\theta) \left(\frac{n-m}{m} \right)^{1/2} + \theta \left(\frac{(n-m)(n-m+1)}{m(m-1)} \right)^{1/2} \right) \\
  &\qqquad \times 0.97^{h-1} \binom{n}{m}^{-1/2} \frac{n!}{n^n} \\
  &\leq \left( (1-\theta) \pfrac{m-1}{n-m+1}^{1/2} + \theta \right) 0.97^{h-1} \binom{n}{m}^{-1/2} \frac{n!}{n^n}.
\end{align*}
Now note that if $3 \leq m \leq n/3$ then $\theta \leq (3/4)^{1/2}$, so
\[
  (1-\theta) \pfrac{m-1}{n-m+1}^{1/2} + \theta \leq 0.97. \qedhere
\]
\end{proof}

The claimed inverse theorem, Theorem~\ref{op-norm-bound-inverse-theorem}, follows directly: if no more than $(1-\eps)m$ of the nontrivial factors of $\rho$ are equal to the same one-dimensional $\rho_0$ of order two, then $\rho$ has height at least $\eps m / 2$.

\subsection{The \texorpdfstring{$m$}{m}-sparse contribution}

The total contribution to~\eqref{fourier-expression} from $m$-sparse representations is bounded by
\[
  C_m = \sum_{m\textup{-sparse}~\rho} \|\hatS(\rho)\|_\op \|\hatS(\rho)\|_\HS^2 \dim \rho.
\]
We now use the bounds proved in this section to bound this sum (and in particular prove~\eqref{low-entropy-minor-arcs-estimate}). Recall that, together with Lemma~\ref{lem-onedim-shift} and the major arc bounds, this dispatches all representations $\rho = \rho_1 \otimes \dots \otimes \rho_n$ where all but $m$ representations $\rho_i$ are equal to the same one-dimensional representation $\rho_0$.

\begin{proposition}%
  \label{low-entropy-minor}
  There is a constant $c>0$ such that
  \[
    C_m \leq
    O\left(
    e^{-c \frac{\log(n/m)}{\log n} m} \pfrac{n!}{n^n}^3
    \right)
  \]
  for $m \leq c n / (\log n)^2$ and sufficiently large $n$.
\end{proposition}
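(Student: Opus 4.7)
The strategy is to split $m$-sparse $\rho$ into two classes according to the near-equality condition of the inverse theorem (Theorem~\ref{op-norm-bound-inverse-theorem}), estimate each part separately, and optimize the split point. Fix a small parameter $\eps = \eps(n,m) \in (0,1)$ to be chosen later (the right choice will turn out to be $\eps = c_0 \log(n/m)/\log n$ with $c_0$ a small absolute constant). Call $\rho$ \emph{structured} if more than $(1-\eps)m$ of its nontrivial factors coincide with a common one-dimensional representation $\rho_0$ of $G$ of order two, and \emph{unstructured} otherwise. Let $C_m^{\mathrm{str}}$ and $C_m^{\mathrm{unstr}}$ be the corresponding subsums of $C_m$.

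For the unstructured part, Theorem~\ref{op-norm-bound-inverse-theorem} gives $\|\hatS(\rho)\|_\op \le 0.99^{\eps m} \binom{n}{m}^{-1/2} n!/n^n$, and pulling this out as a maximum and applying sparseval (Lemma~\ref{sparseval}) to $\sum_{m\text{-sparse}} \|\hatS(\rho)\|_\HS^2 \dim \rho$, the two powers of $\binom{n}{m}^{\pm 1/2}$ cancel and I obtain
\[
    C_m^{\mathrm{unstr}} \le 0.99^{\eps m} \cdot O(m^{1/4}) e^{O(m^{3/2}/n^{1/2})} \pfrac{n!}{n^n}^3 .
\]
In the regime $m \le cn/(\log n)^2$ the sparseval error $m^{3/2}/n^{1/2} \le m/\log n$ is dominated by the target exponent $m\log(n/m)/\log n \ge 2 m \log \log n / \log n$, so it is absorbable.

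For the structured part the inverse theorem no longer helps, so I bound each term crudely via $\|\hatS(\rho)\|_\HS^2 \le \dim(\rho) \|\hatS(\rho)\|_\op^2$ combined with the pointwise operator norm bound (Lemma~\ref{op-norm-bound}), giving
\[
    \|\hatS(\rho)\|_\op \|\hatS(\rho)\|_\HS^2 \dim \rho \le \dim^2(\rho) \binom{n}{m}^{-3/2} \pfrac{n!}{n^n}^3 ,
\]
and then enumerate structured $\rho$ weighted by $\dim^2(\rho)$ directly.  With $\rho_0$ ranging over the at most $|G^\ab| \le n$ one-dimensional representations of order two, and for each $k \in [(1-\eps)m,m]$ a structured $\rho$ being specified by a set $A$ of size $k$ of $\rho_0$-positions, a disjoint set $B$ of size $m-k$, and one nontrivial irreducible representation $\tau_i$ at each $i \in B$: since $\dim \rho$ factors as $\prod_{i \in B} \dim \tau_i$ and $\sum_{\tau \ne 1} \dim^2 \tau \le |G| = n$, the $\dim^2$-weighted count factors cleanly, giving $\sum_{\rho\ \mathrm{str.}} \dim^2 \rho \le n \binom{n}{m} \sum_{j=0}^{\eps m} \binom{m}{j} n^j \lesssim n \binom{n}{m} (en/\eps)^{\eps m}$.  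Hence
\[
    C_m^{\mathrm{str}} \lesssim n \binom{n}{m}^{-1/2} (en/\eps)^{\eps m} \pfrac{n!}{n^n}^3 ,
\]
which, using $\binom{n}{m}^{-1/2} \le (2m/n)^{m/2}$, has exponent (ignoring lower-order terms) $-(m/2)\log(n/m) + \eps m \log n$.

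Finally I choose $\eps = c_0 \log(n/m)/\log n$ with $c_0 < 1/2$ (say $c_0 = 1/4$).  Then the unstructured rate $0.99^{\eps m}$ becomes $e^{-c m \log(n/m)/\log n}$ with $c = c_0 \log(100/99) > 0$, which matches the target rate; and the structured exponent becomes at most $-(1/2 - c_0) m \log(n/m) = -m \log(n/m)/4$, which is \emph{much} stronger than the target.  The main technical obstacle is really the accounting in the structured case: the $\binom{n}{m}^{-3/2}$ gain from cubing the operator norm bound has to beat the $\binom{n}{m}$ loss from counting structured $\rho$ together with the $(en/\eps)^{\eps m}$ loss from their $\dim^2$ weights, and the choice of $\eps$ slightly below the natural threshold $1/2$ is precisely what makes the unstructured and structured estimates line up at the target decay rate.
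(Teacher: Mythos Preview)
Your proof is correct and follows essentially the same approach as the paper: split into structured/unstructured (the paper calls these exceptional/unexceptional), handle the unstructured part with the inverse theorem plus sparseval, handle the structured part with the crude cube of the operator norm bound, and take $\eps$ proportional to $\log(n/m)/\log n$. The only real difference is bookkeeping in the structured count: you sum $\dim^2\rho$ directly using $\sum_{\tau}\dim^2\tau=n$, whereas the paper bounds $\dim\rho\le n^{\eps m/2}$ and the number of exceptional $\rho$ separately; your version is arguably cleaner but yields the same final exponent.
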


Call an $m$-sparse representation $\rho$ \emph{exceptional} if more than $(1-\eps) m$ of its nontrivial factors are equal to the same one-dimensional $\rho_0$ of order two, where $\eps>0$ is a parameter we can optimize. Let $E_m$ be the set of exceptional $\rho$. By Theorem~\ref{op-norm-bound-inverse-theorem} we have
\[
  \max_{\rho \notin E_m} \|\hatS(\rho)\|_\op \leq 0.99^{\eps m} \binom{n}{m}^{-1/2} \frac{n!}{n^n},
\]
so by Lemma~\ref{sparseval},
\begin{equation}
  \label{unexceptional-contribution}
  \sum_{\rho \notin E_m} \|\hatS(\rho)\|_\op \|\hatS(\rho)\|_\HS^2 \dim \rho
  \leq O(m^{1/4}) e^{O(m^{3/2} / n^{1/2})}0.99^{\eps m} \pfrac{n!}{n^n}^3.
\end{equation}

For $\rho \in E_m$ we just use Lemma~\ref{op-norm-bound}. Note that $\dim \rho \leq n^{\eps m / 2}$ (since irreducible representations of $G$ have dimension at most $n^{1/2}$). Thus
\begin{align*}
  \|\hatS(\rho)\|_\op \|\hatS(\rho)\|_\HS^2 \dim \rho
   & \leq \|\hatS(\rho)\|_\op^3 (\dim \rho)^2               \\
   & \leq \binom{n}{m}^{-3/2} n^{\eps m} \pfrac{n!}{n^n}^3.
\end{align*}
The number of $\rho \in E_m$ is at most
\[
  \binom{n}{m} m^{\eps m + 1} n^{\eps m + 1}
\]
(as there are $\binom{n}{m}$ ways to choose which factors should be nontrivial, at most $m$ ways to choose how many $\rho_i$ should be equal to $\rho_0$, at most $\binom{m}{\lfloor \eps m \rfloor} \le m^{\eps m}$ ways to choose which factors should be equal to $\rho_0$, and at most $n^{\eps m+1}$ ways to choose $\rho_0$ and the other $\rho_i$ from the $n$ or fewer irreducible representations of $G$). Thus
\begin{align*}
  \sum_{\rho \in E_m} \|\hatS(\rho)\|_\op \|\hatS(\rho)\|_\HS^2 \dim \rho
   & \leq \binom{n}{m}^{-1/2} n^{3\eps m + 2} \pfrac{n!}{n^n}^3 \\
   & \leq (m/n)^{m/2} n^{3 \eps m + 2} \pfrac{n!}{n^n}^3.
\end{align*}
The proposition follows from this and~\eqref{unexceptional-contribution} by taking $\eps = \frac1{10} \frac{\log (n/m)}{\log n}$.

Finally, we note some quantitative improvements in the case that $G$ has no low-dimensional self-dual representations. In particular assume that $|G^\ab|$ is odd. Then there are in fact no order-two one-dimensional representations $\rho_0$, and so every $m$-sparse representations has height at least $m/2$.
Hence, under this hypothesis the height may be completely ignored: writing $F(m)$ for the maximum value of $\|\hatS(\rho)\|_\op$ over all $m$-sparse $\rho$,
Proposition~\ref{recurrence-proposition} states more simply that for $m \ge 2$,
\begin{equation}
  \label{eq:fm}
  F(m) \le \frac{m-1}{n-m+1} \max \begin{cases} F(m-1) \\ (1-\theta) F(m-1) + \theta F(m-2), \end{cases}
\end{equation}
where $\theta$ is as in the original statement.

Let us now assume $n \geq 10^5$ and $G$ has no self-dual representation of dimension less than $4$.
Then in Proposition~\ref{recurrence-proposition} we have
\[
  \theta \le \frac12 \pfrac{m+2}{m-1}^{1/2}
\]
(recalling $m \ge 2$).  If we define $\eta(m)$ by
\[
  F(m) = \eta(m) \binom{n}{m}^{-1/2} \frac{n!}{n^n}
\]
then~\eqref{eq:fm} implies the bound (for $m \ge 2$)
\[
  \eta(m) \leq \left( (1-\theta) \pfrac{m-1}{n-m+1}^{1/2} + \theta \right) \max\big(\eta(m-1), \eta(m-2)\big).
\]
Using this recurrence and the initial values $\eta(0) = 1$ and $\eta(1) = 0$ (see, e.g., Remark~\ref{remark-lazy}),
we can tabulate bounds for $\eta(m)$ for $m \leq 1000$, say, in the worst case $n = 10^5$.
For $m \in [1000, 0.06n]$, we claim
\[
    \eta(m) \leq 0.8^m.
\]
This holds for $m \in \{1000,1001\}$ by direct calculation, and for larger $m$ it suffices to calculate that $\theta < 0.51$ and
\[
  (1-\theta) \pfrac{m-1}{n-m+1}^{1/2} + \theta < 0.8^2.
\]
We use these effective bounds in the following proposition.
%

\begin{proposition}%
  \label{prop:explicit-low-entropy-minor-arcs}
Suppose that $G$ has no self-dual representation of dimension less than $4$ and $n > 10^5$. Then
\[
  \sum_{m=21}^{0.06n} C_m
  < 0.12 \pfrac{n!}{n^n}^3.
\]
\end{proposition}
\begin{proof}
By the previous discussion and Lemma~\ref{sparseval} we have
\[
  C_m \leq \bigl(1 - (m/n)^{1/2}\bigr)^{-1} \eta(m) e^{s(m/n) n} \binom{n}{m}^{-1/2} \pfrac{n!}{n^n}^3.
\]
Note that
\[
  \log \binom{n}{m} \ge \int_{n-m}^n \log x\ dx - \big(m \log m - m + 1 + \frac12 \log m\big) = n h(m/n) - \frac12 \log m - 1
\]
where $h(t)$ is the entropy function
\[
  h(t) = t \log(1/t) + (1-t) \log(1/(1-t)).
\]
Thus
\[
  C_m \leq \bigl(1 - (m/n)^{1/2}\bigr)^{-1} e^{1/2} m^{1/4} \eta(m) e^{f(m/n) m} \pfrac{n!}{n^n}^3,
\]
where
\[
  f(t) = \frac{s(t) - h(t)/2}{t}.
\]
Note that $f$ is monotonically increasing on $[0, 0.06]$.
Now by direct calculation, using tabulated bounds for $\eta(m)$, we have
\[
    e^{1/2} \sum_{m=21}^{1000} \bigl(1 - (m/10^5)^{1/2}\bigr)^{-1} m^{1/4} \eta(m) e^{f(m/10^5) m} < 0.11.
\]
On the other hand, the sum over $m \in [1001, 0.06n]$ is bounded by the convergent sum
\begin{align*}
  1.33 e^{1/2} \sum_{m=1001}^\infty m^{1/4} 0.8^m e^{f(0.06)m} &\le \frac{1.33 e^{1/2}}{1001^{3/4}} \sum_{m=1001}^\infty m \alpha^m \\
                                                               &= \frac{1.33 e^{1/2}}{1001^{3/4}} \frac{\alpha^{1001} (1001 - 1000 \alpha)}{(1-\alpha)^2} \\
                                                               &< 10^{-22}
\end{align*}
where $\alpha = 0.8 e^{f(0.06)}$.
\end{proof}

\section{High-entropy minor arcs}%
\label{sec:high-entropy-minor-arcs}

Finally we turn to the bound on high-entropy minor arcs,~\eqref{high-entropy-minor-arcs-estimate}.

\subsection{Bounding the Hilbert--Schmidt norm}

In the previous section we proved and used bounds for $\|\hatS(\rho)\|_\op$ for sparse $\rho$. In this subsection we prove the following general bound for $\|\hatS(\rho)\|_\HS$, which is the crucial ingredient for~\eqref{high-entropy-minor-arcs-estimate}.

\begin{theorem}%
  \label{nonabelian-GEC}
  Suppose $G$ is a group, and suppose $\rho = \rho_1^{a_1} \otimes \cdots \otimes \rho_k^{a_k}$, where $\rho_1, \dots, \rho_k$ are distinct irreducible representations of $G$. Let $d_j = \dim \rho_j$. Then
  \[
    \binom{n}{a_1, \dots, a_k} \|\hatS(\rho)\|_\HS^2 \dim \rho \leq  \frac{n!}{n^n} \prod_{j=1}^k \frac{(a_j + d_j^2 - 1)!}{a_j^{a_j} (d_j^2 - 1)!}.
  \]
\end{theorem}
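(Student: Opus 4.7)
My plan is to interpret the LHS as the squared $L^2$-norm of a projection of $1_S$, identify the target subspace via a Schur--Weyl-type argument, and bound the resulting norm by a combinatorial/permanent inequality.

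Since $1_S$ is $S_n$-invariant under coordinate permutations, all irreducible representations of the given type $T = (\vec a;\vec\rho)$ contribute equally to Parseval, so
\[
\binom{n}{\vec a}\,\|\hatS(\rho)\|_\HS^2\,\dim\rho \;=\; \|\Pi_T 1_S\|^2_{L^2(G^n)},
\]
where $\Pi_T$ is the orthogonal projection onto the sum of the isotypic components of all representations of type $T$. Equivalently, viewing $\hatS(\rho)\in\operatorname{End}(V_\rho)=\bigotimes_j \operatorname{End}(V_{\rho_j})^{\otimes a_j}$, the same symmetry forces $\hatS(\rho)$ to commute with the $\prod_j S_{a_j}$-action permuting tensor factors within each block. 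Hence $\hatS(\rho)$ lies in the commutant
\[
W \;:=\; \bigotimes_j \operatorname{Sym}^{a_j}\operatorname{End}(V_{\rho_j}),
\]
whose dimension $\prod_j\binom{a_j+d_j^2-1}{a_j}$ accounts for the first combinatorial factor on the RHS.

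Next, I plan to expand $\hatS(\rho)$ in an orthonormal basis of $W$ built from symmetrized tensor products of matrix-unit bases $\{E_\mu\}$ of each $\operatorname{End}(V_{\rho_j})$, indexed by multisets $\mu^{(j)}\in[d_j^2]^{a_j}/S_{a_j}$ with the usual multinomial normalization. For each basis element $\Phi_{\vec\mu}$, the inner product
\[
\langle\hatS(\rho),\Phi_{\vec\mu}\rangle \;=\; \frac{1}{n^n}\sum_{\pi\in S}\langle\rho(\pi),\Phi_{\vec\mu}\rangle
\]
factors as a product over blocks $j$ of $a_j\times a_j$ permanents of matrix-coefficient entries $\rho_{j,\mu}(\pi(i))$. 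Combining Cauchy--Schwarz in $\pi$ with Schur's permanent bound $|\operatorname{perm}(A)|^2 \leq a_j!\prod_l\sum_{l'}|A(l,l')|^2$ and the orthogonality relation $\sum_g|\rho_{j,\nu}(g)|^2=n/d_j$, and then summing over $\vec\mu$, should (after the combinatorial bookkeeping) yield the claimed bound, with the remaining factor $\prod_j a_j!/a_j^{a_j}$ tracking the bijection density $n!/n^n$ distributed across the $k$ blocks.

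The main obstacle will be the final step, where one must verify that the multinomial corrections from the symmetrized basis and the block-wise permanent bounds combine exactly into $\prod_j (a_j+d_j^2-1)!/(a_j^{a_j}(d_j^2-1)!)$ without slack; a crude Cauchy--Schwarz on $\sum_{\pi\in S}$ typically produces a bound that is too weak by a factor $n!$. A likely cleaner alternative is to bypass the basis expansion entirely and compute the reproducing kernel $K_W$ of $\Pi_T^{S_n}$ evaluated at a point $g_0\in S$ (where $K_W(g_0,g_0)$ is constant by $S_n$-invariance and $S$ being a single free $S_n$-orbit), via a cycle-decomposition argument using the spectral identities $N_l N_{l'} = \delta_{l,l'}(n/d_l) N_l$ for the character matrices $N_l(g,h) := \chi_{\sigma_l}(gh^{-1})$; this routes the combinatorics through a single generating-function computation that should produce the target bound directly.
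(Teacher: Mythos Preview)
Your identification of the left-hand side as $\|\Pi_T 1_S\|^2$ is correct and matches the paper's starting point: the paper writes this as $\|P_{W_1^{a_1}\otimes\cdots\otimes W_k^{a_k}}(1_S)\|^2$ where $W_j\subset L^2(G)$ is the $\rho_j$-isotypic component, of dimension $d_j^2$. However, the rest of your outline has a genuine gap, which you yourself flag: bounding individual coefficients $\langle\hatS(\rho),\Phi_{\vec\mu}\rangle$ by Cauchy--Schwarz over $\pi\in S$ followed by a Schur-type permanent inequality loses a factor of order $n!$, and your reproducing-kernel alternative is not developed enough to assess. (Incidentally, the paper explicitly remarks that the Carlen--Lieb--Loss permanent inequality, which is essentially your ``Schur's permanent bound'', neither implies nor is implied by what is needed here.)

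The paper avoids this loss by never decoupling the sum over $\pi$ from the sum over basis indices. The key device is: for orthogonal $v_1,\dots,v_\ell\in L^2(G)$, writing $e_g=n^{1/2}1_g$,
\[
\sum_{b_1+\cdots+b_\ell=n}\;\Bigl|\sum_{r\sim b}\prod_{g\in G}\langle e_g,v_{r(g)}\rangle\Bigr|^2
\;\le\;\bigl((|v_1|^2+\cdots+|v_\ell|^2)/n\bigr)^n,
\]
proved by expressing the left side as a torus integral $\int_{(S^1)^\ell}\prod_g\bigl|\sum_l\langle e_g,v_l\rangle z_l\bigr|^2$ and applying AM--GM \emph{pointwise in $z$}. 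Taking $\{v_l\}$ to be an orthonormal basis of $\bigoplus_j W_j$ already controls $\sum_b\binom{n}{b}\|P_{W^b}(1_S)\|^2$ with no slack. To then extract the single block $b\in B(a)$ (i.e.\ $\sum_l b_{jl}=a_j$ for each $j$) with the correct factors $\binom{a_j+d_j^2-1}{a_j}$, the paper rescales each basis vector $w_{jl}\mapsto t_{jl}^{1/2}w_{jl}$, restricts the sum to $B(a)$, and \emph{averages over the simplices} $\sum_l t_{jl}=a_j$ using the Dirichlet integral $\int_{\sum x_l=a}\prod_l x_l^{r_l}=a^{\sum r_l}\prod_l r_l!\big/(\sum r_l+d-1)!$. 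Neither of these two steps---the pointwise AM--GM on the torus, and the Dirichlet averaging over scaling parameters---appears in your plan, and together they are the entire content of the proof.
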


The abelian case is worth highlighting, as it sharpens~\cite[Theorem~4.1]{EMM}. In this case $d_j=1$ for each $j$ and $\hatS(\rho)$ is a scalar, so we get
\begin{equation} \label{abelian-GEC}
  \binom{n}{a_1, \dots, a_k} |\hatS(\rho)|^2 \leq \frac{a_1! \cdots a_k!}{a_1^{a_1} \cdots a_k^{a_k}} \frac{n!}{n^n}.
\end{equation}
Another illustrative case is $k = 1, a_1 = n$: in this case the theorem states
\[
  \|\hatS(\rho_1^{\otimes n})\|_\HS^2 \leq \frac1{d^n} \binom{n + d^2-1}{d^2-1} \pfrac{n!}{n^n}^2,
\]
where $d = d_1 = \dim \rho_1$. Thus $\|\hatS(\rho_1^{\otimes n})\|_\HS^2$ is exponentially smaller than $(n!/n^n)^2$ whenever $\dim \rho_1 \geq 2$ (while, if $\dim \rho_1 = 1$, then $\hatS(\rho_1^{\otimes n})$ is $n!/n^n$ or $0$ depending on whether $\chi_1^n = 1$).

Similarly to the proof of Lemma~\ref{sparseval}, this theorem turns out to have very little to do with group theory: the bound holds in general for projections of $1_S$ onto tensor products of subspaces of $L^2(G)$ of dimension $d_j^2$, and this statement is independent of the group operation.  The full abstract formulation is Lemma~\ref{abstract-nonabelian-GEC} below. First we prove a key lemma in this direction.

\begin{lemma}%
  \label{GEC-lemma}
  Let $V$ be an inner product space with orthonormal basis $e_1, \dots, e_n$, and let $v_1, \dots, v_k \in V$ be orthogonal. For $r: \{1, \dots, n\} \to \{1, \dots, k\}$, write $r \sim (a_1, \dots, a_k)$ if $|r^{-1}(i)| = a_i$ for each $i$. Then
  \[
    \sum_{a_1 + \cdots + a_k = n} \left| \sum_{r \sim a} \langle e_1, v_{r(1)}\rangle \cdots \langle e_n, v_{r(n)}\rangle \right|^2
    \leq \left((|v_1|^2 + \cdots + |v_k|^2) / n \right)^n.
  \]
\end{lemma}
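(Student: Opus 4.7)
My plan is to recognize the quantity on the left-hand side as the squared $\ell^2$-norm of the coefficient sequence of a specific polynomial in $k$ variables, and then use Parseval on the torus together with a pointwise AM--GM bound. Write $c_{ij} = \langle e_i, v_j \rangle$, and consider the linear forms $p_i(t_1,\dots,t_k) = \sum_j c_{ij} t_j$ for $1 \le i \le n$. Expanding the product $p(t) := \prod_{i=1}^n p_i(t)$ and collecting by multidegree gives
\[
  p(t) = \sum_a b_a\, t_1^{a_1} \cdots t_k^{a_k},
  \qquad
  b_a = \sum_{r \sim a} \prod_{i=1}^n c_{i, r(i)},
\]
so the left-hand side of the claimed inequality is exactly $\sum_a |b_a|^2$, where $a$ ranges over multi-indices with $\sum_j a_j = n$.

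By Parseval on the multiplicative torus $(S^1)^k$ with normalized Haar measure $d\mu$, this sum equals $\int_{(S^1)^k} |p(t)|^2 \, d\mu(t)$. The crucial computation is that, for $t$ on the torus, $\sum_i |p_i(t)|^2$ is a pointwise constant depending only on the norms $|v_j|^2$. Indeed, expanding and using that $e_1, \dots, e_n$ is a full orthonormal basis and the $v_j$ are orthogonal,
\[
  \sum_{i=1}^n |p_i(t)|^2 = \sum_{j, j'} \Big( \sum_i c_{ij} \overline{c_{ij'}} \Big) t_j \overline{t_{j'}} = \sum_j |v_j|^2,
\]
since $\sum_i c_{ij} \overline{c_{ij'}} = \langle v_{j'}, v_j \rangle = \delta_{jj'} |v_j|^2$. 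With this identity in hand, AM--GM applied pointwise to the $n$ nonnegative numbers $|p_i(t)|^2$ yields
\[
  |p(t)|^2 = \prod_{i=1}^n |p_i(t)|^2 \le \Big( \frac{1}{n} \sum_{i=1}^n |p_i(t)|^2 \Big)^n = \Big( \frac{|v_1|^2 + \cdots + |v_k|^2}{n} \Big)^n
\]
for every $t$ on the torus, and integrating against the probability measure $d\mu$ completes the proof.

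The step I expect to be the main obstacle is simply setting up the right encoding; once the sum $\sum_a |b_a|^2$ is rewritten as $\int |p(t)|^2 \, d\mu$ via Parseval, the orthogonality hypothesis on the $v_j$ manifests very cleanly as the fact that $\sum_i |p_i(t)|^2$ is a constant on the torus, and AM--GM then finishes things essentially for free. The only other point requiring care is that $e_1, \dots, e_n$ is a full orthonormal basis of $V$, so that Parseval in $V$ gives $\sum_i c_{ij}\overline{c_{ij'}} = \langle v_{j'}, v_j \rangle$ with equality rather than just an inequality; this is precisely the hypothesis of the lemma.
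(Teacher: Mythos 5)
Your proof is correct and is essentially the same as the paper's: both recognize the left-hand side as $\int_{(S^1)^k}\prod_{i=1}^n\big|\sum_j\langle e_i,v_j\rangle t_j\big|^2\,d\mu(t)$ (you via Parseval on the torus, the paper by expanding and matching multidegrees), then apply pointwise AM--GM and use that $e_1,\dots,e_n$ is a full orthonormal basis and the $v_j$ are pairwise orthogonal to evaluate $\sum_i|p_i(t)|^2=\sum_j|v_j|^2$ on the torus. The ``Parseval'' framing is a slightly tidier way of packaging the paper's direct expansion, but the underlying argument is identical.
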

\begin{proof}
  Consider the integral
  \[
    I = \int_{(z_1,\dots,z_k) \in (S^1)^k} \prod_{i=1}^n \left| \sum_{j=1}^k \langle e_i, v_j \rangle z_j \right|^2.
  \]
  By expanding the product we get
  \[
    \int_{(z_1,\dots,z_k) \in (S_1)^k} \sum_{r,s\colon \{1, \dots,n\} \to \{1, \dots, k\}}
    \left( \prod_{i=1}^n \langle e_i, v_{r(i)}\rangle z_{r(i)} \right)
    \left( \prod_{i=1}^n \overline{\langle e_i, v_{s(i)}\rangle z_{s(i)}} \right).
  \]
  For any $j$, if $|r^{-1}(j)| \neq |s^{-1}(j)|$ then the product results in a nonzero power of $z_j$, so the integral vanishes, while if $|r^{-1}(j)| = |s^{-1}(j)|$ for all $j$ then the integrand is constant. Thus
  \[
    I = \sum_{a_1 + \cdots + a_k = n} \left| \sum_{r \sim a} \langle e_1, v_{r(1)}\rangle \cdots \langle e_n, v_{r(n)}\rangle \right|^2.
  \]
  On the other hand by the AM--GM inequality we have
  \begin{align*}
    I
     & \leq \int_{(z_1,\dots,z_k) \in (S^1)^k} \left(\frac1n \sum_{i=1}^n \left|\sum_{j=1}^k \langle e_i, v_j \rangle z_j \right|^2 \right)^n \\
     & = \int_{(z_1,\dots,z_k) \in (S^1)^k} \left(\frac1n \sum_{j=1}^k |v_j|^2 |z_j|^2 \right)^n                                       \\
     & = \left( (|v_1|^2 + \cdots + |v_k|^2) / n \right)^n. \qedhere
  \end{align*}
\end{proof}

In particular, suppose we fix $a = (a_1, \dots, a_k)$. Then
\[
  \left |\sum_{r\sim a} \langle e_1, v_{r(1)}\rangle \cdots \langle e_n, v_{r(n)} \rangle \right|^2 \leq ((|v_1|^2 + \cdots + |v_k|^2) / n)^n.
\]
Note that the left-hand side is $2a$-homogeneous in $v_1, \dots, v_k$. By applying the same inequality to the rescaled vectors $v_i' = \frac{a_i^{1/2}}{|v_i|} v_i$, so that $|v_i'|^2 = a_i$, we deduce that
\begin{equation}\label{abstract-GEC}
  \left |\sum_{r\sim a} \langle e_1, v_{r(1)}\rangle \cdots \langle e_n, v_{r(n)} \rangle \right|^2 \leq \frac1{a_1^{a_1} \cdots a_k^{a_k}} |v_1|^{2a_1} \cdots |v_k|^{2a_k}.
\end{equation}
(The inequality is trivial if any $v_i$ is zero.)
The abelian case~\eqref{abelian-GEC} of Theorem~\ref{nonabelian-GEC} follows directly from~\eqref{abstract-GEC} by taking $v_1, \dots, v_k$ to be $\rho_1, \dots, \rho_k \in L^2(G)$.

\begin{remark}
  Put another way, if $W$ is the matrix whose columns comprise $a_1$ copies of $v_1$, $a_2$ copies of $v_2$, etc., where $v_1,\dots,v_k$ are orthogonal, then the permanent $\per W$ obeys
  \begin{equation} \label{our-perm}
    |\per W| \leq \frac{a_1! \cdots a_k!}{a_1^{a_1/2} \cdots a_k^{a_k/2}}  |v_1|^{a_1} \cdots |v_k|^{a_k}.
  \end{equation}
  This is sharp when $v_i$ have unit-norm entries, and disjoint supports of size $a_i$.

  The inequality~\eqref{our-perm} can be compared with an inequality of Carlen, Lieb, and Loss~\cite[Theorem~1.1]{cll}, which states that
  \begin{equation} \label{their-perm}
    |\per W| \leq \frac{n!}{n^{n/2}} |w_1| \cdots |w_n|
  \end{equation}
  for any $n \times n$ matrix $W$ with columns $w_1, \dots, w_n$ (with no orthogonality condition). Neither result implies the other. They agree when $w_1=\cdots=w_n=v_1$, $k=1$ and $a_1=n$, in which case the result is just AM--GM.%

  In fact,~\eqref{our-perm} and~\eqref{their-perm} admit a common generalization. We have
  \begin{equation} \label{common-generalization}
    |\per W| \leq \frac{a_1! \cdots a_k!}{a_1^{a_1/2} \cdots a_k^{a_k/2}}  |w_1| \cdots |w_n|
  \end{equation}
  whenever the columns $w_1, \dots, w_n$ of $W$ can be partitioned into sets of sizes $a_1, \dots, a_k$ such that vectors in different sets are orthogonal. This follows from~\eqref{our-perm} and an observation originally due to Banach~\cite{banach} that the injective tensor norm of a symmetric tensor is achieved at diagonal tensors $x \otimes \dots \otimes x$: see the discussion in~\cite[Problem 73]{scottish}, and~\cite[Proposition 1.1(2)]{bs},~\cite[Theorem 4]{harris-1}, or~\cite[Theorem 2.1]{pappas-et-al} for modern proofs.
\end{remark}

The full nonabelian case of Theorem~\ref{nonabelian-GEC} is a little more involved.  We now give the analogous abstract statement.

\begin{lemma}%
  \label{abstract-nonabelian-GEC}
  Let $V$ be an inner product space with orthonormal basis $e_1, \dots, e_n$, let $W_1, \dots, W_k$ be orthogonal subspaces of $V$, let $d_i = \dim W_i$, and let $a_1, \dots, a_k \geq 0$ be integers such that $a_1 + \cdots + a_k = n$. Let $s \in V^{\otimes n}$ be the element
  \[
    s = \frac1{n!} \sum_{\sigma \in S_n} e_{\sigma(1)} \otimes \cdots \otimes e_{\sigma(n)}.
  \]
  Then
  \[
    \binom{n}{a_1, \dots, a_k} \big|P_{W_1^{a_1} \otimes \cdots \otimes W_k^{a_k}} (s)\big|^2 \leq  \|s\|^2 \prod_{j=1}^k \frac{(a_j + d_j - 1)!}{a_j^{a_j} (d_j-1)!}.
  \]
\end{lemma}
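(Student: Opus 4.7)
The plan is to generalize Lemma~\ref{GEC-lemma} by combining symmetry arguments with the permanent bound \eqref{our-perm}. First I would exploit the full $S_n$-symmetry of $s$: for any arrangement $r\colon\{1,\dots,n\}\to\{1,\dots,k\}$ with $|r^{-1}(j)|=a_j$ the quantity $|P_{W_{r(1)}\otimes\cdots\otimes W_{r(n)}}(s)|^2$ depends only on the multiplicities $(a_1,\dots,a_k)$, and the $\binom{n}{a_1,\dots,a_k}$ such arrangements yield pairwise orthogonal subspaces (since $W_1,\dots,W_k$ are orthogonal). Hence $\binom{n}{a_1,\dots,a_k}\,|P_{W_1^{a_1}\otimes\cdots\otimes W_k^{a_k}}(s)|^2 = |Qs|^2$, where $Q$ is the projection onto the direct sum of all arrangements. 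Restricting the symmetry to the block subgroup $S_{a_1}\times\cdots\times S_{a_k}$ furthermore shows that $P_{W_1^{a_1}\otimes\cdots\otimes W_k^{a_k}}(s) \in \Sym^{a_1}W_1\otimes\cdots\otimes\Sym^{a_k}W_k$.

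Next I would invoke the classical coherent-state resolution of identity on each symmetric power: for an inner product space $W$ of dimension $d$ and any integer $a\ge 0$,
\[
\binom{a+d-1}{d-1}\int_{|v|=1}|v^{\otimes a}\rangle\langle v^{\otimes a}|\,dv \;=\; I_{\Sym^a W},
\]
with Haar probability measure on the unit sphere (this follows from Schur's lemma applied to the irreducible $U(d)$-action on $\Sym^a W$, with the scalar pinned down by taking a trace). Tensoring these identities over $j$ and pairing against $P_{W_1^{a_1}\otimes\cdots\otimes W_k^{a_k}}(s)$ reduces the problem to bounding $|\langle T_v,s\rangle|^2$ pointwise, where $T_v = v_1^{\otimes a_1}\otimes\cdots\otimes v_k^{\otimes a_k}$ with unit $v_j\in W_j$.

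Finally, $\langle T_v,s\rangle = \frac1{n!}\per(M_v)$ for the $n\times n$ matrix whose $i$th row is the coordinate vector of $v_{j(i)}$ in the basis $e_1,\dots,e_n$ (here $j(i)$ records the block containing position $i$). Since the $v_j$ are unit vectors lying in mutually orthogonal subspaces, the columns of $M_v^T$ split into orthogonal groups of sizes $a_1,\dots,a_k$, and the permanent inequality \eqref{our-perm} applied to $M_v^T$ gives $|\per M_v|^2 \le \prod_j (a_j!)^2/a_j^{a_j}$ uniformly in $v$. Assembling everything, using $\binom{n}{a_1,\dots,a_k} = n!/\prod_j a_j!$ and $\|s\|^2 = 1/n!$, and using $\binom{a_j+d_j-1}{d_j-1}\cdot a_j! = (a_j+d_j-1)!/(d_j-1)!$, produces the claimed inequality after routine simplification. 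The main non-trivial input is the coherent-state identity (which is what allows us to harvest the sharper $\dim\Sym^{a_j}W_j$ factors rather than the weaker $d_j^{a_j}$ factors one would get from a naive orthonormal basis sum), but this is a standard computation and I expect no serious obstacle elsewhere.
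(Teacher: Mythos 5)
Your argument is correct, but it takes a genuinely different route from the paper's. Both proofs ultimately rest on the same core analytic inequality — Lemma~\ref{GEC-lemma}, or equivalently the permanent bound \eqref{our-perm} derived from it — but the mechanism by which the factor $\binom{a_j+d_j-1}{d_j-1}$ is extracted is different. The paper works directly with the Fourier coefficients $\langle s, w^b\rangle$ attached to an orthonormal basis of each $W_j$: it introduces free scalars $t_{ij}$ in Lemma~\ref{GEC-lemma}, restricts the resulting inequality to $b \in B(a)$, and then integrates over the simplex $\{t_{i1}+\cdots+t_{id_i}=a_i\}$ using the Dirichlet integral formula; the dimension count $\binom{a_j+d_j-1}{d_j-1}$ emerges as a ratio of factorials from that integration. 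You instead observe that $P_{W_1^{a_1}\otimes\cdots\otimes W_k^{a_k}}(s)$ lies in $\bigotimes_j\Sym^{a_j}W_j$ (correct, since $s$ is $S_n$-invariant and the projection commutes with the block permutations) and invoke the coherent-state resolution of identity on each symmetric power — a Schur's lemma computation — to write $|P(s)|^2$ as $\prod_j\binom{a_j+d_j-1}{d_j-1}$ times an average of $|\langle T_v,s\rangle|^2$, then bound the integrand pointwise by identifying $\langle T_v,s\rangle$ with a normalized permanent and applying \eqref{our-perm}. Your route is arguably more conceptual: the dimension of the symmetric power appears as what it is, and the remaining pointwise bound is exactly the rank-one case (one vector per orthogonal subspace). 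The paper's route avoids any representation-theoretic input on the unitary group, at the cost of the somewhat opaque simplex integration. (The two integrations are of course closely related: pushing a uniform point on the unit sphere in $\C^{d}$ forward via $v \mapsto (|v_1|^2,\dots,|v_d|^2)$ lands on the uniform measure on the simplex.) Two small remarks: the opening observation relating $\binom{n}{a_1,\dots,a_k}|P(s)|^2$ to $|Qs|^2$ is correct but is never actually used in your chain of inequalities, so it can be dropped; and when you describe the columns of $M_v^T$ as "splitting into orthogonal groups" you mean of course that columns from different groups are orthogonal (columns within a group are identical), which is precisely the hypothesis of \eqref{our-perm}, so the application is legitimate.
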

\begin{proof}
  Let $(w_{ij})_{1 \leq j \leq d_i}$ be an orthonormal basis for $W_i$, for each $i$. For $b = (b_{11}, \dots, b_{kd_k})$, write $w^b = w_{11}^{b_{11}} \otimes \cdots \otimes w_{kd_k}^{b_{kd_k}}$. Let $t_{ij} \ge 0$ be arbitrary scalars.  By applying Lemma~\ref{GEC-lemma} to the collection of all vectors $t_{ij}^{1/2} w_{ij}$ we have
  \[
    \sum_{b_{11} + \cdots + b_{kd_k} = n} \binom{n}{b_{11}, \dots, b_{kd_k}}^2 |\langle s, w^b \rangle|^2
    \,t_{11}^{b_{11}} \cdots t_{kd_k}^{b_{kd_k}}
    \leq \left((t_{11} + \cdots + t_{kd_k}) / n \right)^n.
  \]
  Let $B(a)$ be the set of those $b$ such that $b_{i1} + \cdots + b_{id_i} = a_i$ for each $i$. Then by simply restricting the sum above it is clear that
  \[
    \sum_{b \in B(a)} \binom{n}{b_{11}, \dots, b_{kd_k}}^2 |\langle s, w^b \rangle|^2
    \,t_{11}^{b_{11}} \cdots t_{kd_k}^{b_{kd_k}}
    \leq \left((t_{11} + \cdots + t_{kd_k}) / n \right)^n.
  \]
  We now integrate over all choices of $t_{ij} \ge 0$ satisfying $t_{i1} + \cdots + t_{id_i} = a_i$ for each $1 \leq i \leq k$.  Note the right-hand side is $1$ for all such choices.  Using
  \[
    \int_{x_1 + \cdots + x_m = a} x_1^{r_1} \cdots x_m^{r_m} = a^{r_1+\cdots+r_m} \frac{r_1! \cdots r_m!}{(r_1 + \cdots + r_m + m - 1)!},
  \]
  for any $a>0$ and integers $r_1,\dots,r_m \ge 0$, we get
  \[
    \sum_{b \in B(a)} \binom{n}{b_{11}, \dots, b_{kd_k}}^2 |\langle s, w^b \rangle|^2
    \left(\prod_{i,j} b_{ij}! \right)
    \left(\prod_{i} \frac{a_{i}^{a_i}}{(a_i + d_i -1)!} \right)
    \leq \prod_{i=1}^k \frac1{(d_i-1)!}
  \]
  and rearranging gives
  \[
    \sum_{b \in B(a)} \binom{n}{b_{11}, \dots, b_{kd_k}} |\langle s, w^b \rangle|^2 \leq \frac{(a_1 + d_1 - 1)!}{a_1^{a_1} (d_1-1)!} \cdots \frac{(a_k+d_k-1)!}{a_k^{a_k} (d_k-1)!} \frac1{n!}.
  \]
  The left-hand side is $\binom{n}{a_1, \dots, a_k} |P_{W_1^{a_1} \otimes \cdots \otimes W_k^{a_k}}(s)|^2$.
\end{proof}

Apply this to $V = L^2(G)$, $e_i = n^{1/2} 1_{g_i}$ for some enumeration $G=\{g_1,\dots,g_n\}$, and $W_j = \left\{ \langle v, \rho_j\rangle : v \in \HS(V_j)\right\}$ (the $\rho_j$-isotypic component). We have
\[
  \|\hatS(\rho)\|_\HS^2 \dim \rho = \|P_{W_1^{a_1} \otimes \cdots \otimes W_k^{a_k}} (1_S) \|_2^2.
\]
Note that $\dim W_j = d_j^2$ and $1_S =\frac{n!}{n^{n/2}} s$. Theorem~\ref{nonabelian-GEC} follows immediately.

\subsection{The sum over orbits}

Assume that $\rho_1, \dots, \rho_k$ is a complete list of the distinct irreducible representations of $G$. From Theorem~\ref{nonabelian-GEC} and elementary manipulations, it follows that
\begin{equation}
  \label{eq:one-orbit-bound}
  \binom{n}{a_1, \dots, a_k} \|\hatS(\rho)\|_\HS^3 \dim \rho
  \leq \prod_{j=1}^k d_j^{-a_j/2} \binom{a_j + d_j^2 -1}{a_j}^{3/2} \frac{a_j!^2}{a_j^{3a_j/2}} \frac{n!}{n^{3n/2}}.
\end{equation}
To show~\eqref{high-entropy-minor-arcs-estimate}, we need to bound the sum of the left-hand side over all $\rho$ in the high-entropy minor arcs.  As stated in Section~\ref{sec-outline}, it is cleaner to do this with generating function techniques.

If we sum~\eqref{eq:one-orbit-bound} over all choices of $a_1, \dots, a_k$ such that $a_1 + \cdots + a_k = n$, and such that $a_j \leq n-m$ wherever $d_j = 1$, we obtain exactly the coefficient of $z^n$ in the power series
\[
  \prod_{j=1}^k \theta_{d_j}(z) \cdot \frac{n!}{n^{3n/2}},
\]
where
\[
  \theta_d(z) = \sum_{a=0}^n d^{-a/2} \binom{a + d^2 - 1}{a}^{3/2} \frac{a!^2}{a^{3a/2}} z^a
\]
for $d > 1$, while for $d = 1$ we take the truncation
\[
  \theta_1(z) = \sum_{a=0}^{n-m} \frac{a!^2}{a^{3a/2}} z^a.
\]
To bound the sum, it therefore suffices to bound the generating functions $\theta_d(z)$ for some suitable choice of $z$.  It turns out the correct choice is always of the shape $z=w e^2 n^{-1/2}$ where $w \ge 1$ is some small constant.  With this in mind we prove the following technical bounds.

\begin{lemma}%
  \label{lem:theta-d}
  Let $\theta_d(z)$ be defined as above.
  \begin{enumerate}[label=(\roman*)]
    \item If $z \le 0.15$ and $n-m \le e^4 z^{-2} \big(1 - 0.66 z^2 \log (1/z)\big)$, then
    \[
      \theta_1(z) \leq \exp(z + z^3 / 10).
    \]
    \item If $n > 10^4$, $d > 1$ and $z \leq \min(0.9 d^{1/2}, 2) e^2 n^{-1/2}$, then
    \[
      \theta_d(z) \leq \exp(d^{5/2} z).
    \]
  \end{enumerate}
\end{lemma}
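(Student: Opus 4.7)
The plan is to apply Stirling's approximation to estimate each term of $\theta_d$, and then bound the resulting series by splitting it into a ``head'' (where direct Taylor-series comparison yields the bound) and a ``tail'' (which decays rapidly under the hypotheses).

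For part (i), Stirling in the form $a! \leq e\sqrt{a}\,(a/e)^a$ for $a \geq 1$ gives
\[
  \frac{a!^2}{a^{3a/2}}\, z^a \;\leq\; e^2 a \Bigl(\frac{a z^2}{e^4}\Bigr)^{a/2}.
\]
The truncation hypothesis translates to $az^2/e^4 \leq 1 - 0.66\, z^2 \log(1/z)$ for every $a \leq n-m$, so each term is at most $e^2 a \exp\!\bigl(-0.33\, a\, z^2 \log(1/z)\bigr)$, giving rapid decay as soon as $a$ is moderately large. I would then split the sum at $a = 4$: the first coefficients of $\theta_1$ can be computed exactly ($1$, $1$, $1/2$, $36/3^{9/2}$, $576/4^6$, $\dots$) and compared with those of $\exp(z + z^3/10) = 1 + z + z^2/2 + (1/6 + 1/10)z^3 + \dots$\,. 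The two series agree through $z^2$, while the coefficient $36/3^{9/2} \approx 0.2566$ of $z^3$ in $\theta_1$ is strictly less than $4/15 \approx 0.2667$; the resulting surplus on the right, combined with $z \leq 0.15$, is enough to swallow the tail $\sum_{a \geq 5}$ controlled by Stirling.

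For part (ii), I would combine Stirling in $a$ with $\binom{a+d^2-1}{a} \leq (a+d^2-1)^{d^2-1}/(d^2-1)!$ to bound the general term by
\[
  d^{-a/2} \binom{a+d^2-1}{a}^{3/2} \frac{a!^2}{a^{3a/2}} z^a \;\leq\; \frac{e^2 a\,(a+d^2-1)^{3(d^2-1)/2}}{((d^2-1)!)^{3/2}}\Bigl(\frac{a z^2}{d e^4}\Bigr)^{a/2}.
\]
Under the hypothesis $z \leq \min(0.9 d^{1/2}, 2)\, e^2 n^{-1/2}$ one has $a z^2/(d e^4) = O(a/n)$, so Stirling in $a$ again produces rapid decay once $a$ is a noticeable fraction of $n$; the polynomial factor $(a+d^2-1)^{3(d^2-1)/2}$ is tempered by the denominator $((d^2-1)!)^{3/2}$, which by Stirling in $d$ provides ample room. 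Summing carefully (separating the head where $a = O(1)$ — comparing its Taylor coefficients to those of $\exp(d^{5/2} z)$ — from the tail bounded by the displayed estimate) gives the claimed $\exp(d^{5/2} z)$.

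The main obstacle is numerical bookkeeping rather than conceptual: the constants ($0.15$, $0.66$, $0.9$, $2$, $1/10$, and the exponents $5/2$, $3/2$) are tuned quite tightly. The delicate points are (a) verifying that the $z^3$ slack in (i) really does absorb the entire tail uniformly for every $z \in (0, 0.15]$ and every admissible $n - m$, and (b) ensuring in (ii) that the competition among the polynomial binomial factor, the factorial denominator, the $d^{-a/2}$ decay, and the admissible range of $z$ yields exactly the exponent $d^{5/2}$ rather than something weaker.
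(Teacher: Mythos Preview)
Your overall strategy---Stirling on $a!$ plus head/tail split---matches the paper's, but your tail bound in part~(i) does not work as written. From $az^2/e^4 \le 1 - 0.66\,z^2\log(1/z)$ you pass to
\[
  e^2 a\,(az^2/e^4)^{a/2} \;\le\; e^2 a\,\exp\bigl(-0.33\,a\,z^2\log(1/z)\bigr),
\]
and call this ``rapid decay''. It is not: for $z=0.15$ the exponent is about $-0.014\,a$, and for smaller $z$ it is even slower. Summing $e^2 a\,e^{-c a}$ over $a\ge 5$ with $c\approx 0.014$ gives something of order $e^2/c^2 \approx 4\times 10^4$, and as $z\to 0$ the sum blows up---when you need it bounded by a quantity of order $z^3\le 3.4\times 10^{-4}$. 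The mistake is using only the \emph{upper} bound on $a$ to control $(az^2/e^4)^{a/2}$, throwing away the fact that for moderate $a$ this quantity is tiny in its own right. The paper instead writes each term as $\exp(\chi(a))$ with $\chi(x)=2+\log x + x\bigl(\tfrac12\log x + \log z - 2\bigr)$, observes $\chi$ is convex on $[10,A]$, checks $\chi(A)\le \chi(10)$, and bounds every term by $\exp(\chi(10)) = O(z^{10})$---\emph{then} multiplies by the number $n-m\le e^4 z^{-2}$ of terms. This is the step your sketch is missing.

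For part~(ii), the bound $\binom{a+d^2-1}{a}\le (a+d^2-1)^{d^2-1}/(d^2-1)!$ is too coarse: the right-hand side grows like $a^{d^2-1}$, which is a large polynomial factor when $d$ is not small, and it is not clear your estimate sums to anything close to $\exp(d^{5/2}z)$. The paper's key step is the exact rewriting
\[
  d^{-a/2}\binom{a+d^2-1}{a}^{3/2}\frac{a!^2}{a^{3a/2}}\,z^a
  \;=\;
  \frac{(d^{5/2}z)^a}{a!}\left(\frac{a!}{a^a}\prod_{r=1}^{a-1}(1+r/d^2)\right)^{3/2},
\]
after which one shows the bracketed factor is $\le 1$ for $3\le a\le 3d^2$ (so these terms are directly dominated by the Taylor series of $\exp(d^{5/2}z)$), and only the range $a>3d^2$ requires the Stirling-style tail analysis. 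Your approach might be salvageable with enough care, but without this identity you will struggle to reproduce the precise exponent $d^{5/2}$.
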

\begin{proof}
  We first consider (i). We will in fact show that, under these conditions,
  \[
    \theta_1(z) \le e^z + (1/10) (z^3 + z^4)
  \]
  which is sufficient (as $e^z \ge 1 + z$).  We have
  \[
    \theta_1(z) = 1 + z + z^2 / 2 + \frac{3!^2}{3^{9/2}} z^3 + \sum_{a=4}^{n-m} \frac{a!^2}{a^{3a/2}} z^a
  \]
  and hence
  \[
    \theta_1(z) - e^z \le \left(\frac{3!^2}{3^{9/2}} - \frac1{3!}\right) z^3 + \left(\sum_{a=4}^9 \left(\frac{a!^2}{a^{3a/2}} - \frac1{a!} \right) (0.15)^{a-4} \right) z^4 + \sum_{a=10}^{n-m} \frac{a!^2}{a^{3a/2}} z^a.
  \]
  By direct computation, the terms $3 \le a \le 9$ contribute at most $0.09 z^3 + 0.12 z^4$, and it is easy to check this is at most $0.095 (z^3+z^4)$ when $z \le 0.15$.

  For $a \ge 10$, using $a! \leq e\,a^{1/2} (a/e)^a$ we have
  \[
    \frac{a!^2}{a^{3a/2}} z^a \leq e^2 a (a^{1/2} z/e^2)^a = \exp(\chi(a))
  \]
  where $\chi$ is the function
  \[
    \chi(x) = 2 + \log x + x \big((1/2) \log x + \log z - 2 \big).
  \]
  We set
  \[
    A = e^4 z^{-2} \big(1- 0.66 z^2 \log (1/z)\big) > 2000
  \]
  Since $\chi$ is convex on $x \ge 2$, the maximum value of $\chi$ on the interval $[10,A]$ occurs at one of the endpoints.  We claim that in fact it occurs at $10$.  Indeed, we have
  \[
    \log A \le 4 + 2 \log (1/z) - 0.66 z^2 \log(1/z) \le 4 + 2 \log (1/z)
  \]
  and for $z \le 0.15$ we have $A \ge 0.9718 e^4 z^{-2}$, so
  \begin{align*}
    \chi(A) &= 2 + \log A + A \big((1/2) \log A + \log z - 2 \big) \\
    &\le 6 + 2 \log (1/z) - 0.9718 (0.33 e^4 \log (1/z) \big) \\
    &\le 6 - 15.5 \log (1/z)
  \end{align*}
  whereas
  \[
    \chi(10) \ge -4.2 - 10 \log (1/z)
  \]
  and it is straightforward to deduce $\chi(A) \le \chi(10)$ when $z \le 0.15$.  This proves the claim.

  Bounding each term by this maximum value, we deduce that
  \begin{align*}
    \sum_{a=10}^{n-m} \frac{a!^2}{a^{3a/2}} z^a &\le (n-m) \exp(\chi(10)) \\
    &\le (n-m) 0.0153 z^{10} \le (n-m)\, (1.162 \times 10^{-6}) z^5 \le 0.00007 z^3
  \end{align*}
  where we used the bounds $z \le 0.15$ and $n-m \le e^4 / z^2$ again.  Combining this with the bounds on $a=3,\dots,9$ gives (i).

  Now we consider (ii).  We write $z = u\, e^2 n^{-1/2}$, where $u \le \min\big(0.9 d^{1/2}, 2\big)$ by hypothesis.  We may expand
  \[
    \theta_d(z) = 1 + d^{5/2} z + \left(\frac{1+1/d^2}{2}\right)^{3/2} \frac{\big(d^{5/2} z\big)^2}{2!} + \sum_{a=3}^n d^{-a/2} \binom{a+d^2-1}{a}^{3/2} \frac{a!^2}{a^{3a/2}} z^a
  \]
  and note that $\left(\frac{1+1/d^2}{2}\right)^{3/2} \le (5/8)^{3/2} < 1/2$.  Hence it suffices to show that
  \begin{equation}
    \label{eq:thetad-stp-1}
    \sum_{a=3}^n d^{-a/2} \binom{a+d^2-1}{a}^{3/2} \frac{a!^2}{a^{3a/2}} z^a \le \frac{d^5 z^2}{4} + \sum_{a=3}^{n} \frac{(d^{5/2} z)^a}{a!}.
  \end{equation}
  We observe that
  \[
    d^{-a/2} \binom{a+d^2-1}{a}^{3/2} \frac{a!^2}{a^{3a/2}} z^a = \frac{(d^{5/2} z)^a}{a!} \left(\frac{a!}{a^a} \prod_{r=1}^{a-1} (1+r/d^2)\right)^{3/2}.
  \]
  We claim that, for $d \ge 2$ and $3 \le a \le 3 d^2$, we have the inequality
  \begin{equation}
    \label{eq:eta-inequality}
    \frac{a!}{a^a} \prod_{r=1}^{a-1} (1+r/d^2) \le 1.
  \end{equation}
  Indeed, we note that
  \[
    \sum_{r=0}^{a-1} \log (1 + r/d^2) \le \int_{0}^a \log (1 + x/d^2)\ dx = a \big((1 + d^2/a) \log (1 + a/d^2) - 1\big) = a\, \eta(a/d^2)
  \]
  where $\eta \colon [0,\infty) \to [0,\infty]$ is the monotonic function $\eta(t) = (1 + 1/t) \log(1+t) - 1$, where in particular $\eta(3) < 0.85$.  
  As usual we may bound $\log(a! / a^a) \le 1 + (1/2) \log a - a$; hence, the inequality holds provided
  \[
    -a + (1/2) \log a + 1 + 0.85 a < 0,
  \]
  i.e., provided $a \ge 16$.  On the other hand, for fixed $a$ the left-hand side of~\eqref{eq:eta-inequality} is monotonic in $d$, so it suffices to check the cases $d=2$, $a \in \{3,\dots,12\}$ and $d=3$, $a \in \{13,14,15\}$ of~\eqref{eq:eta-inequality} directly, which all hold by direct calculation, proving the claim.

  The bound~\eqref{eq:eta-inequality} handles the terms $3 \le a \le 3 d^2$ in~\eqref{eq:thetad-stp-1}, and in particular it now suffices to show that
  \[
    \sum_{a=3 d^2 + 1}^n d^{-a/2} \binom{a+d^2-1}{a}^{3/2} \frac{a!^2}{a^{3a/2}} z^a \le \frac{d^5 z^2}{4}.
  \]
  Expanding $z=u\, e^2 n^{-1/2}$ and again bounding $a! \le e a^{1/2} (a  / e)^a$, the left-hand side is at most
  \[
    \sum_{a=3 d^2 + 1}^n e^2 a \binom{a+d^2-1}{a}^{3/2} \big(u / d^{1/2}\big)^{a} (a / n)^{a/2},
  \]
  and dividing by $d^5 z^2 / 4$, it suffices to show that
  \[
    4 e^{-2} d^{-6} \sum_{a=3 d^2 + 1}^n a^2  \binom{a+d^2-1}{a}^{3/2} \big(u / d^{1/2}\big)^{a-2} (a / n)^{a/2-1} \le 1.
  \]
  Since $n > 10^4$, we may in turn bound this by the infinite sum
  \begin{equation}
    \label{eq:thetad-stp-2}
    4 e^{-2} d^{-6} \sum_{a=3 d^2 + 1}^\infty a^2  \binom{a+d^2-1}{a}^{3/2} \min\big(0.9, 2 / d^{1/2}\big)^{a-2} \min(a / 10^4, 1)^{a/2-1}.
  \end{equation}
  This sum is convergent (as the exponential saving $0.9^{a}$ dominates), and depends only on $d$. To complete the proof, we claim that~\eqref{eq:thetad-stp-2} is bounded by $1$ for all $d \ge 2$.

For $2 \le d \le 38$, it is routine to compute the sum~\eqref{eq:thetad-stp-2} to sufficient precision to verify that bound is indeed satisfied, so assume $d \ge 39$. Then $\min(0.9, 2/d^{1/2}) = 2 / d^{1/2}$. We may ignore the other min factor. Hence it suffices to bound
  \begin{equation}
    \label{eq:thetad-stp-3}
    4 e^{-2} d^{-6} \sum_{a=3 d^2 + 1}^\infty a^2  \binom{a+d^2-1}{a}^{3/2} (2 / d^{1/2})^{a-2}.
  \end{equation}
  For $a \ge 3 d^2$, for any $x \in (0, 1)$ we may bound
  \[
    \binom{a+d^2-1}{a} \le x^{-a} (1-x)^{-(d^2-1)} \le x^{-a} (1-x)^{-a/3+1} \leq (x^{-1} (1-x)^{-1/3})^{a},
  \]
  and setting $x=3/4$ gives an upper bound of $2.1166^a$. Thus~\eqref{eq:thetad-stp-3} is bounded by
  \[
    e^{-2} d^{-5} \sum_{a=3 d^2 + 1}^\infty a^2  \big(6.16 / d^{1/2}\big)^{a}
  \]
  and since
  \[
    \sum_{r=1}^\infty r^2 y^r \le \sum_{r=1}^\infty r(r+1) y^r = \frac{2 y}{(1-y)^3} \le \frac2{(1-y)^3}
  \]
  for $0 \le y < 1$, when $d \ge 39$ this is bounded by
  \[
    e^{-2} 39^{-5} \frac{2}{(1 - 6.16 / 39^{1/2})^3} < 0.002  < 1.
  \]
  This completes the (very technical, for which we apologize) proof.
\end{proof}

Let $R_m$ be the set of all $\rho$ that have some one-dimensional factor of multiplicity at least $n-m$. Then by the discussion preceding the lemma we have
\[
  \sum_{\rho \in R_m^c} \|\hatS(\rho)\|_\HS^3 \dim \rho \leq \prod_{j=1}^k \theta_{d_j}(z) \cdot \frac1{z^n} \frac{n!}{n^{3n/2}}
\]
for all $z > 0$. Set $z = w e^2 n^{-1/2}$ for some $w \in [1, 2]$.  Suppose the hypotheses of Lemma~\ref{lem:theta-d} are satisfied for this $z$ (and the given values $n$, $m$ and $d_1,\dots,d_k$); in particular, for (i) it is sufficient that $n \geq (2e^2 / 0.15)^2$ (i.e., $n \geq 9707$), and
\[
  w \le (1 - m/n)^{-1/2} \big(1 - 0.66 e^4 w^2 / n \log (e^{-2} w^{-1} n^{1/2})\big)
\]
and for (ii) we require $n > 10^4$ and $w \le 0.9 d_j^{1/2}$ for each $d_j \ne 1$.
Then we conclude
\begin{align*}
  \sum_{\rho \in R_m^c} \|\hatS(\rho)\|_\HS^3 \dim \rho
  &\leq \exp\left( \sum_{j=1}^k d_j^{5/2} z + (1/10) z^3 k \right) \pfrac{n!}{z^n n^{3n/2}} \\
  &\leq \exp\left( w e^2 \sum_{j=1}^k d_j^{5/2} n^{-1/2} - n \log w + (1/10) z^3 k \right) \pfrac{n!}{e^{2n} n^n}.
\end{align*}
Since
\[
  \sum_{j=1}^k d_j^{5/2} \leq \max_j d_j^{1/2} \sum_{j=1}^k d_j^2 \leq n^{5/4},
\]
we therefore have
\begin{equation}
  \label{eq:orbit-bound}
  \sum_{\rho \in R_m^c} \|\hatS(\rho)\|_\HS^3 \dim \rho
  \leq \exp( w e^2 n^{3/4} - n \log w + (1/10) z^3 n) \frac{n!}{e^{2n} n^n}.
\end{equation}

\begin{proposition}%
  \label{prop:dense-minor-arcs}
For some constants $C, c>0$, the following holds for sufficiently large $n$. If $m\geq C n^{3/4}$, then
\[
  \sum_{\rho \in R_m^c} \|\hatS(\rho)\|_\HS^3 \dim \rho \leq e^{-cm} \pfrac{n!}{n^n}^3.
\]
\end{proposition}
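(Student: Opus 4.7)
The plan is to apply \eqref{eq:orbit-bound} with a judiciously chosen value of $w$ and convert the bound into the desired form using Stirling's formula. I will take $w = 1 + c_0\, m/n$ with $c_0 = 1/4$ (so $z = w e^2 n^{-1/2}$); since $m \le n$, this choice satisfies $w \in [1, 5/4]$ uniformly. For the hypotheses of Lemma~\ref{lem:theta-d}, condition (ii) demands $w \le 0.9\, d_j^{1/2}$ at each $d_j \ne 1$, and the worst case $d_j = 2$ gives $0.9\sqrt{2} \approx 1.272 > 5/4$. Condition (i) demands $w \le (1 - m/n)^{-1/2}(1 - O(\log n / n))$, which via $(1 + c_0 m/n)^2 \le 1 + (2c_0 + c_0^2) m/n$ and $1/(1-m/n) \ge 1 + m/n$ reduces to $c_0 \le 2/3$ plus a correction of order $\log n$ that is negligible once $m \ge C n^{3/4}$.

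Granting the hypotheses, I expand the exponent of \eqref{eq:orbit-bound}. The inequality $\log(1+x) \ge x - x^2/2$ for $x \ge 0$ gives $n \log w \ge c_0 m - c_0^2 m^2/(2n)$, and $m \le n$ makes the linear-in-$m$ contribution at most $-c_0(1 - c_0/2) m = -7m/32$. The tertiary term $(1/10) z^3 n = O(n^{-1/2})$ is negligible. Converting the prefactor $n!/(e^{2n} n^n)$ to $\pfrac{n!}{n^n}^3$ via Stirling costs a factor $(2\pi n)^{-1}(1 + O(1/n))$, so altogether
\[
    \sum_{\rho \in R_m^c} \|\hatS(\rho)\|_\HS^3 \dim \rho \le \exp\bigl((5/4) e^2 n^{3/4} - 7m/32 + O(\log n)\bigr) \pfrac{n!}{n^n}^3.
\]
If $C \ge (40/3) e^2$, then $(5/4) e^2 n^{3/4} \le (3/32)\, m$ for $m \ge C n^{3/4}$, so the exponent is at most $-m/8 + O(\log n)$. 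Since $m \ge C n^{3/4} \gg \log n$, this is at most $-m/16$ for $n$ sufficiently large, proving the result with, for example, $C = 100$ and $c = 1/16$.

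The heavy lifting has already been done in \eqref{eq:orbit-bound} and Lemma~\ref{lem:theta-d}; what remains is essentially a scalar optimization of $w$ together with book-keeping of lower-order errors. The only real subtlety is verifying the hypotheses of Lemma~\ref{lem:theta-d} uniformly across the whole range $m \in [C n^{3/4}, n]$, and this is handled cleanly by the uniform bound $w \le 5/4$.
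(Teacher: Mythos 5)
Your argument is correct and follows essentially the same route as the paper: both apply \eqref{eq:orbit-bound} with $w$ chosen slightly larger than $1$ on a scale of $m/n$ (the paper takes $w = e^{m/5n}$, you take $w = 1 + m/4n$), verify the hypotheses of Lemma~\ref{lem:theta-d} uniformly, and then balance the $O(n^{3/4})$ term against the $-n\log w \asymp -c_0 m$ term. The only small slip is that condition~(i) actually forces $c_0 < 1/2$ rather than $c_0 \le 2/3$ (since $\log w \le c_0 m/n$ while $-\tfrac12\log(1-m/n) \ge m/2n$), but your chosen $c_0 = 1/4$ satisfies this anyway, so the conclusion stands.
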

\begin{proof}
  We may apply~\eqref{eq:orbit-bound} with $w = e^{m/5n}$.  For $n$ and $m$ sufficiently large it is clear that the hypotheses above are satisfied, and we have
\begin{align*}
  w e^2 n^{3/4} - n \log w + (1/10) (e^2 w / n^{1/2})^3 n
  &\leq O(n^{3/4}) - m/5 + o(1).
\end{align*}
As long as $m \geq Cn^{3/4}$ for a sufficiently large constant $C$, and $n$ is sufficiently large, this is bounded by $-cm$ for some $c>0$.
\end{proof}

\begin{proposition}%
  \label{prop:non-asymptotic-dense-minor-arcs-big}
Suppose $m \geq 0.06 n$ and that $n > 10^{10}$. Then
\[
  \sum_{\rho \in R_m^c} \|\hatS(\rho)\|_\HS^3 \dim \rho
  \leq e^{-0.005n} \pfrac{n!}{n^n}^3.
\]
\end{proposition}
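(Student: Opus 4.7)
The plan is to prove this as a quantitative refinement of the argument for Proposition~\ref{prop:dense-minor-arcs}. That proof applied \eqref{eq:orbit-bound} with $w = e^{m/5n}$; since here $m/n$ is bounded below by the explicit constant $0.06$, I will instead choose $w$ to be a specific numerical constant, for instance $w = 1.03$ (so $z = 1.03\,e^2 n^{-1/2}$), and track constants carefully.

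The first step is to verify the hypotheses of Lemma~\ref{lem:theta-d} quantitatively. Hypothesis (ii) is easy: $w < 2$, $n > 10^4$, and for any $d \geq 2$ we have $0.9\, d^{1/2} \geq 0.9\sqrt{2} > w$. Hypothesis (i) is more delicate, as $z \leq 0.15$ is immediate but the bound on $n-m$ reduces essentially to $w^2 (n-m)/n \leq 1 - O(n^{-1}\log n)$; using $m \geq 0.06 n$ gives $(n-m)/n \leq 0.94$, and since $w^2 = 1.0609$ the left-hand side is approximately $0.997$, leaving ample room for the $O(n^{-1}\log n)$ error.

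The second step is to estimate the exponent in \eqref{eq:orbit-bound}. The linear-in-$n$ contribution is $n\bigl(we^2 n^{-1/4} - \log w\bigr)$; with $\log w = \log 1.03 \geq 0.02955$ and $we^2 n^{-1/4} \leq 1.03\cdot e^2 / 10^{5/2} \leq 0.0241$ for $n \geq 10^{10}$, this yields a saving of at least $0.0054\,n$. The correction $(1/10) z^3 n = w^3 e^6/(10 n^{1/2})$ is $O(n^{-1/2})$ and hence negligible. A Stirling-type bound $n!/(e^{2n} n^n) \leq (2\pi n)^{-1}(n!/n^n)^3$ converts the prefactor into the desired normalization, and combining gives the overall bound $(2\pi n)^{-1} e^{-0.0054\, n}(n!/n^n)^3 \leq e^{-0.005\, n}(n!/n^n)^3$ for $n \geq 10^{10}$.

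The main obstacle is the tightness of the choice of $w$. Hypothesis (i) caps $w$ at approximately $\sqrt{1/0.94} \approx 1.0314$, while obtaining a saving of $0.005\,n$ requires $\log w \geq 0.005 + we^2 n^{-1/4}$, which for $n = 10^{10}$ demands $w \geq e^{0.0291} \approx 1.0295$. The window $[1.0295,\, 1.0314]$ is narrow but non-empty, and $w = 1.03$ sits inside it. This narrow window is precisely what dictates the large threshold $n > 10^{10}$ in the statement: a substantially smaller value of $n$ would inflate the $we^2 n^{-1/4}$ term and close the window. Beyond this delicate calibration, the argument is a routine specialization of the proof of Proposition~\ref{prop:dense-minor-arcs}.
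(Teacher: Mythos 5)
Your proof is correct and takes essentially the same approach as the paper: both choose $w = 1.03$ in \eqref{eq:orbit-bound}, verify the hypotheses of Lemma~\ref{lem:theta-d}, compute an exponent saving of about $-0.0054n$, and absorb the $n!/(e^{2n}n^n)$ versus $(n!/n^n)^3$ prefactor discrepancy into the spare $0.0004n$. Your discussion of the narrowness of the admissible window for $w$ and why this forces the large threshold $n > 10^{10}$ is a helpful gloss that the paper leaves implicit.
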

\begin{proof}
  We may apply~\eqref{eq:orbit-bound} with $w = 1.03$; it is easy to check that the hypotheses are satisfied.  Then
\[
  w e^2 n^{3/4} - n \log w + (1/10) (e^2 w / n^{1/2})^3 n
  \leq -0.005n
\]
  for $n > 10^{10}$.
  Finally, note $n!/(e^{2n} n^n) < (n!/n^n)^3$.
\end{proof}

\begin{proposition}%
  \label{prop:non-asymptotic-dense-minor-arcs}
Suppose $m \geq 0.71 n$, that $n > 3\times 10^5$, and that $G$ has no irreducible representations of dimension $d \in [2, 4]$. Then
\[
  \sum_{\rho \in R_m^c} \|\hatS(\rho)\|_\HS^3 \dim \rho
  \leq e^{-0.03n} \pfrac{n!}{n^n}^3.
\]
\end{proposition}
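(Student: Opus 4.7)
The plan is to apply the generating-function bound \eqref{eq:orbit-bound} with $w = 2$, i.e., $z = 2 e^2 n^{-1/2}$; this is the largest value of $w$ compatible with part~(ii) of Lemma~\ref{lem:theta-d}. Indeed, by hypothesis every nontrivial irreducible representation of $G$ has dimension $d \ge 5$, and since $0.9 \sqrt 5 > 2$, we have $\min(0.9 d^{1/2}, 2) = 2$ for each such $d$, so the constraint $z \le \min(0.9 d^{1/2}, 2) e^2 n^{-1/2}$ is met with equality. The dimension hypothesis $d \ge 5$ is thus exactly what licenses this choice; a weaker hypothesis allowing $d = 4$ would force $w \le 0.9 \sqrt 4 = 1.8$, shrinking $n \log w$ and pushing the argument out of reach at these sizes.

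I would next verify the hypotheses of Lemma~\ref{lem:theta-d}(i). First, $z = 2 e^2 / \sqrt n < 0.03 < 0.15$ since $n > 3 \times 10^5$. Second, $e^4 z^{-2} = n/4$ exactly, and the correction factor $(1 - 0.66 z^2 \log(1/z))$ is $1 - O(\log n / n)$, so the truncation constraint reads (essentially) $n - m \le n/4$, which is implied by $m \ge 0.78 n > 0.75 n$ with room to spare. With both parts of Lemma~\ref{lem:theta-d} in force, \eqref{eq:orbit-bound} then yields
\[
  \sum_{\rho \in R_m^c} \|\hatS(\rho)\|_\HS^3 \dim \rho \le \exp\!\big( 2 e^2 n^{3/4} - n \log 2 + \tfrac{1}{10} z^3 n \big) \frac{n!}{e^{2n} n^n}.
\]

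To finish, I would pass from $n!/(e^{2n} n^n)$ back to $(n!/n^n)^3$ by Stirling (introducing a factor $1/(2\pi n) \cdot (1 + O(1/n))$), and note that $z^3 n/10 = O(1/\sqrt n)$. The target conclusion $\le e^{-0.05 n}(n!/n^n)^3$ then reduces to the scalar inequality $2 e^2 n^{-1/4} + 0.05 < \log 2 - O(\log n / n)$, which rearranges (ignoring the negligible $O(\log n / n)$ term) to $n^{1/4} \ge 2 e^2 / (\log 2 - 0.05) \approx 22.98$, i.e., $n \ge 2.79 \times 10^5$; hence the bound holds whenever $n > 3 \times 10^5$, with a margin of roughly $0.012$ in the exponent rate. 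The principal obstacle is that this is essentially the sharpest form of the argument: the choice $w = 2$ is forced by the representation-theoretic hypothesis, and the threshold $3 \times 10^5$ lies close to the edge of feasibility, so the various lower-order corrections (from the truncation factor in Lemma~\ref{lem:theta-d}(i), the $z^3 n / 10$ term, and the Stirling approximation) must all be tracked carefully to confirm that the inequality survives at the stated boundary.
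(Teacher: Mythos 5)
Your proposal is correct and follows essentially the same route as the paper: take $w = 2$ in \eqref{eq:orbit-bound} (which the hypothesis $d \ge 5$ licenses via Lemma~\ref{lem:theta-d}(ii) since $0.9\sqrt{5} > 2$), check that $m \ge 0.78n$ guarantees the truncation hypothesis of Lemma~\ref{lem:theta-d}(i) because $e^4 z^{-2} = n/4$, and verify the numerics at the threshold $n = 3 \times 10^5$. One small imprecision: the Stirling prefactor $n!/(e^{2n} n^n) \cdot (n^n/n!)^3 = (n^n/(n! e^n))^2 \approx 1/(2\pi n)$ is strictly less than $1$, so it only helps and requires no careful tracking; your displayed inequality places the resulting $O(\log n / n)$ correction on the wrong side, but since you correctly note it is negligible and the main inequality $2e^2 n^{-1/4} < \log 2 - 0.05$ already holds at $n = 3 \times 10^5$, the conclusion stands.
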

\begin{proof}
  We may apply~\eqref{eq:orbit-bound} with $w = 1.85$; again it is straightforward to check that the hypotheses hold for $n > 3 \times 10^5$, and
\[
  w e^2 n^{3/4} - n \log w +(1/10) (e^2 w / n^{1/2})^3 n
  \leq -0.03n.
\]
Now again use $n!/(e^{2n} n^n) < (n!/n^n)^3$.
\end{proof}

\section{Proof of the Hall--Paige conjecture}%
\label{sec:quant-proof}

We have now completed the proof of Theorem~\ref{main-theorem-hp-version}. In particular, the Hall--Paige conjecture holds for every sufficiently large group. In this section we explain some quantitative improvements in the special case that $G$ has no low-dimensional representations, and use these to give a complete proof of the Hall--Paige conjecture, apart from a few explicit cases that we list.

Let $d(G)$ denote the minimal degree of a nontrivial complex representation of $G$. The quality of our bounds depends on $d(G)$: for every $d_0 \leq 20$, say, we can compute some $n_0$ such that any counterexample $G$ would have to satisfy either $d(G) \leq d_0$ or $|G| \leq n_0$, with larger values of $d_0$ leading to smaller values of $n_0$. The choices $d_0 \in \{3, 11, 20\}$ are sufficiently representative for our needs.

\begin{theorem}%
  \label{thm:d13}
Suppose $G$ is a counterexample to the Hall--Paige conjecture.
\begin{enumerate}[label=(\roman*)]
  \item Either $d(G) \leq 3$ or $|G| \leq 10^{10}$.
  \item Either $d(G) \leq 11$ or $|G| \leq 3 \times 10^5$.
  \item Either $d(G) \leq 20$ or $|G| \leq 10^5$.
\end{enumerate}
\end{theorem}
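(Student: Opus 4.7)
The plan is to combine the major and minor arc estimates from Sections~\ref{sec:major-arcs}--\ref{sec:high-entropy-minor-arcs}, exploiting the absence of low-dimensional representations to strengthen them. First apply Wilcox's reduction: any minimal counterexample $G$ is nonabelian simple, so $G^\ab = 1$ (making the Hall--Paige condition automatic) and every nontrivial irreducible representation has dimension at least $d(G)$. Throughout, $d(G) \geq 4$ so $G$ has no self-dual representation of dimension less than $4$, and $n = |G| > 10^5$. Fourier-expand
\[
  1_S \ast 1_S \ast 1_S(1) = \sum_\rho \tr\bigl(\hatS(\rho)^3\bigr) \dim \rho,
\]
and partition the sum by $m = |\supp \rho|$ into a major arc $m \leq 25$, a low-entropy minor arc $25 < m \leq 0.06 n$, and a high-entropy minor arc $m > 0.06 n$.

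The quantitative form of Proposition~\ref{major-arcs} gives for $n > 10^5$ the major-arc bound
\[
  M_{25,1} > \bigl(\frakS_{25}(1) - 0.38\bigr) \pfrac{n!}{n^n}^3 > 0.22 \pfrac{n!}{n^n}^3,
\]
since $\frakS_{25}(1)$ matches $e^{-(n-1)/(2n)}$ to within $1/13!$. The low-entropy minor arc is handled by Corollary~\ref{cor:explicit-low-entropy-minor-arcs}, whose hypotheses (no self-dual representation of dimension less than $4$, and $n > 10^5$) are in force, giving a bound of $0.055 (n!/n^n)^3$. It thus suffices to bound the high-entropy minor arc by something like $0.16 (n!/n^n)^3$ to conclude $1_S \ast 1_S \ast 1_S(1) > 0$, and hence that $G$ has a complete mapping, contradicting that it is a counterexample.

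Case~(i) is the cleanest: $|G| > 10^{10}$ places us in the range of Proposition~\ref{prop:non-asymptotic-dense-minor-arcs-big}, which with $m = 0.06 n$ bounds the high-entropy part by $e^{-0.005 n} (n!/n^n)^3$, far below target. For cases~(ii) and~(iii) the condition $d(G) \geq 13$ rules out representations of dimension in $[2,4]$, so Proposition~\ref{prop:non-asymptotic-dense-minor-arcs} bounds the tail $m \geq 0.78 n$ by $e^{-0.05 n} (n!/n^n)^3$.

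The main obstacle is then the intermediate range $m \in [0.06 n, 0.78 n]$ in cases~(ii) and~(iii), which neither stated high-entropy proposition covers. The plan to close this gap is twofold. First, iterate the operator-norm recursion of Proposition~\ref{recurrence-proposition} past $m = 0.06 n$: the parameter $\theta \leq \frac{1}{m-1}\bigl(1 + \frac{m-2}{d(G)}\bigr)^{1/2}$ shrinks substantially when $d(G) \geq 13$, so the contraction factor $(1-\theta)\sqrt{(m-1)/(n-m+1)} + \theta$ stays strictly below $1$ well past $m = 0.06 n$; pairing this with sparseval (Lemma~\ref{sparseval}) extends Corollary~\ref{cor:explicit-low-entropy-minor-arcs} to a larger window of $m$. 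Second, for $m$ in the remaining piece up to $0.78 n$, apply the generating-function bounds of Section~\ref{sec:high-entropy-minor-arcs} with an interpolating value $w \in (1,2)$, using the strong lower bound $\dim \rho \geq d(G)^m$ and Lemma~\ref{lem:theta-d} to force the factors $\theta_d$ to contract faster when all nontrivial factors have dimension at least $13$ or $21$. Once the intermediate range is dispatched, aggregating the three regimes brings the total minor arc mass below $0.22 (n!/n^n)^3$, so $1_S \ast 1_S \ast 1_S(1) > 0$ and the theorem follows.
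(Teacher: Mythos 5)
Your outline of the overall structure is right, and cases (i) and the low-entropy/major-arc bookkeeping match the paper. But your plan for the intermediate range $m/n \in [0.06, 0.78]$ in cases (ii) and (iii) has a genuine gap, and it is precisely where the paper's argument has content.

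Your first idea (continue iterating the operator-norm recursion of Proposition~\ref{recurrence-proposition} past $m = 0.06n$) cannot reach $0.78n$: the contraction factor is $(1-\theta)\sqrt{(m-1)/(n-m+1)} + \theta$ with $\theta \geq m/(2m-2) \approx 1/2$ (the first term in the $\max$ defining $\theta$ dominates once $m \gg d$), and $\sqrt{(m-1)/(n-m+1)} > 1$ as soon as $m > n/2$, so the factor exceeds $1$ there and the recursion gives no decay. Your second idea (generating functions with an interpolating $w \in (1,2)$) also fails: the constraint on $z = w e^2 n^{-1/2}$ coming from $\theta_1$ in Lemma~\ref{lem:theta-d}(i) forces $n - m \lesssim e^4 z^{-2} = n/w^2$, so for $w = 2$ you can only descend to $m \gtrsim 0.75n$ regardless of $d(G)$, and for $w$ close to $1$ (which would let $m$ go down to $0.06n$) the exponential saving $-n\log w$ collapses and you are back to needing $n > 10^{10}$, i.e.\ Proposition~\ref{prop:non-asymptotic-dense-minor-arcs-big}. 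Neither route covers $[n/2, 0.78n]$ at the stated group orders.

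The paper's actual device for this range is simpler and you did not identify it: use sparseval (Lemma~\ref{sparseval}) \emph{twice}. It bounds the total $\sum_{m\text{-sparse }\rho} \|\hatS(\rho)\|_\HS^2 \dim\rho$, and since every $m$-sparse $\rho$ has permutation orbit of size $\geq \binom{n}{m}$ and $\dim\rho \geq d^m$, each single term satisfies $\binom{n}{m} \|\hatS(\rho)\|_\HS^2 d^m \leq (\text{sparseval bound})$, giving a pointwise bound on $\|\hatS(\rho)\|_\HS$. Pulling this out of one factor yields $\sum_\rho \|\hatS(\rho)\|_\HS^3 \dim\rho \leq \binom{n}{m}^{-1/2} d^{-m/2} (\cdots) e^{(3/2)s(m/n)n}(n!/n^n)^3$, which reduces to checking that $f_d(t) = 3s(t) - h(t) - t\log d$ is negative on $[0.06, 0.78]$ for $d = 13$ (and, crucially, on $[0.06,1]$ for $d = 21$ — so in case (iii) one doesn't even need Proposition~\ref{prop:non-asymptotic-dense-minor-arcs}; the endpoint $m=n$ is handled by Parseval). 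A minor point: Wilcox's reduction to simple groups is not used in the proof of this theorem — it is invoked only in the corollary that follows. The hypothesis $d(G) \geq 4$ already forces $G^\ab = 1$, so the Hall--Paige condition is automatic without appealing to minimality or simplicity.
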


\begin{proof}
Let $G$ be a finite group of order $n > 10^5$ and minimal complex representation degree $d \geq 4$. Recall that $S \subset G^n$ denotes the set of bijections $\{1, \dots, n\} \to G$. We claim that
\[
  1_S * 1_S * 1_S(1) > 0.
\]
By~\eqref{fourier-expression} we have
\[
  1_S * 1_S * 1_S(1) = \sum_\rho \langle \hatS(\rho)^3, \rho(1)\rangle \dim \rho.
\]
Let $C_m$ be the contribution to this sum from $m$-sparse $\rho$, and let
\[
  M_m = C_0 + C_1 + \cdots + C_m.
\]
By Proposition~\ref{major-arcs}, we have
\[
  \left|M_{20} \pfrac{n!}{n^n}^{-3} - \frakS_{20}\right| < 0.32,
\]
where
\[
  \frakS_{20} = \sum_{k \leq 10} \frac1{k!} \left( -\frac{n-1}{2n} \right)^k > 0.6.
\]
Thus
\[
  M_{20} > 0.28 \pfrac{n!}{n^n}^3.
\]
Meanwhile, by Proposition~\ref{prop:explicit-low-entropy-minor-arcs},
\[
  \sum_{m=21}^{0.06n} |C_m|
  < 0.12 \pfrac{n!}{n^n}^3.
\]
Thus we need only show
\begin{equation} \label{eq:Cm-large-m-bound}
  \sum_{m \geq 0.06n} |C_m| < 0.16 \pfrac{n!}{n^n}^3.
\end{equation}

If $n > 10^{10}$,~\eqref{eq:Cm-large-m-bound} is immediate from Proposition~\ref{prop:non-asymptotic-dense-minor-arcs-big}.

If $n > 3 \times 10^5$ and $d \ge 5$, from Proposition~\ref{prop:non-asymptotic-dense-minor-arcs},
\[
  \sum_{m \geq 0.71n} |C_m| < e^{-0.03 n} \pfrac{n!}{n^n}^3.
\]
Hence we need only worry about the intermediate range $m/n \in [0.06, 0.71]$. It turns out that we can eliminate this range using Lemma~\ref{sparseval} alone, assuming $d \geq 12$. By Lemma~\ref{sparseval} we have
\[
  \sum_{m\textup{-sparse}~\rho} \|\hatS(\rho)\|_\HS^2 \dim \rho \leq (1 - (m/n)^{1/2})^{-1} e^{s(m/n) n} \pfrac{n!}{n^n}^2.
\]
Note that $\dim \rho \geq d^m$. Moreover, every $m$-sparse $\rho$ has a permutation orbit of size at least $\binom{n}{m}$. Thus
\[
  \binom{n}{m} \|\hatS(\rho)\|_\HS^2 d^m \leq (1 - (m/n)^{1/2})^{-1} e^{s(m/n) n} \pfrac{n!}{n^n}^2.
\]
Thus
\begin{align*}
  &\sum_{m\textup{-sparse}~\rho} \|\hatS(\rho)\|_\HS^3 \dim \rho \\
  &\qqquad \leq \binom{n}{m}^{-1/2} d^{-m/2} (1 - (m/n)^{1/2})^{-3/2} e^{(3/2) s(m/n) n} \pfrac{n!}{n^n}^3 \\
  &\qqquad\leq (1 - (m/n)^{1/2})^{-3/2} e^{1/2} m^{1/4} e^{f_d(m/n) n / 2} \pfrac{n!}{n^n}^3,
\end{align*}
where
\[
  f_d(t) = 3s(t) - h(t) - t \log d,
\]
for $h(t) = t \log (1/t) + (1-t) \log (1/(1-t))$ as in Section~\ref{sec:low-entropy-minor-arcs}. The function $f_{12}$ has roots near $.0424\dots$ and $0.7172\dots$, and
\[
  \max_{t \in [0.06, 0.71]} f_{12}(t) \leq -0.005.
\]
Hence
\[
  \sum_{\substack{m\textup{-sparse}~\rho \\ m/n \in [0.06, 0.71]}} \|\hatS(\rho)\|_\HS^3 \dim \rho
  \leq 100 n^{1/4} e^{-0.0025 n} \pfrac{n!}{n^n}^3 \leq 10^{-10} \pfrac{n!}{n^n}^3.
\]
The required bound~\eqref{eq:Cm-large-m-bound} follows.

Finally, assume $n > 10^5$ and $d \geq 21$. As above we have
\[
  \sum_{m\text{-sparse}} \|\hatS(\rho)\|_\HS^3 \dim \rho \leq (1 - (m/n)^{1/2})^{-3/2} e^{1/2}\, m^{1/4} e^{f_d(m/n) n/2} \pfrac{n!}{n^n}^3.
\]
The function $f_{21}$ is uniformly negative on $[0.06, 1]$, and
\[
  \max_{t \in [0.06, 1]} f_{21}(t) = f_{21}(0.06) < -0.03.
\]
Hence
\begin{align*}
  \sum_{\substack{m\textup{-sparse}~\rho \\ m/n \in [0.06, 1)}} \|\hatS(\rho)\|_\HS^3 \dim \rho 
  &\leq (1 - (1-1/n)^{1/2})^{-3/2} e^{1/2} n^{1/4} e^{-0.015 n} \pfrac{n!}{n^n}^3\\
  &\leq 10^{-10} \pfrac{n!}{n^n}^3.
\end{align*}
The endpoint $m = n$ can be handled almost identically, replacing Lemma~\ref{sparseval} with Parseval's identity
\[
  \sum_\rho \|\hatS(\rho)\|_\HS^2 \dim \rho = \frac{n!}{n^n}.
\]
This completes the proof.
\end{proof}

\begin{corollary}
If $G$ is a nonabelian simple counterexample to the Hall--Paige conjecture, then $G$ is either $A_n$ $(5 \leq n \leq 12)$, $\PSL_2(q)$ $(7\leq q \leq 53)$, or one of the groups listed in Table~\ref{table:possible-counterexamples}.
\end{corollary}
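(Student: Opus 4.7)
The plan is to combine Theorem~\ref{thm:d13}(ii) with the classification of finite simple groups (CFSG) and auxiliary classifications of simple groups by representation degree and by order. Let $G$ be a nonabelian simple counterexample. Theorem~\ref{thm:d13}(ii) forces $d(G)\le 12$ or $|G|\le 3\times 10^5$, so the task reduces to producing the list of nonabelian simple groups satisfying at least one of these two conditions, and then striking from it those groups known to admit a complete mapping by Wilcox's construction for simple groups of Lie type~\cite{wilcox}.

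First I would compile the list of nonabelian simple groups of order at most $3\times 10^5$. This is a short, well-known list (readable off the ATLAS): some alternating groups, several $\PSL_2(q)$, a few other small-rank Lie type groups such as $\PSL_3(q)$, $\PSU_3(q)$, $\Sz(q)$ for the smallest parameters, and the Mathieu groups $M_{11}$ and $M_{12}$. In parallel I would enumerate the nonabelian simple $G$ with $d(G)\le 12$ using the classification of low-dimensional faithful complex irreducible representations of quasisimple groups (going back to Blichfeldt and Feit, with modern refinements by Hiss--Malle, Tiep, and collaborators). Both lists are finite and their union is what we must consider.

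Next, for each group in the union I would decide whether Wilcox's construction supplies a complete mapping. Wilcox's method handles all finite simple groups of Lie type of sufficiently large rank, and for many small-rank families all but the smallest parameters, leaving behind only finitely many small Lie type groups, the sporadic groups, and the alternating groups. Cross-referencing the degree bound $d(G)\le 12$ (which among alternating groups singles out $A_n$ with roughly $n\le 13$) and the order bound $|G|\le 3\times 10^5$ (which among $\PSL_2(q)$ gives $q$ up to about $53$, etc.) against the groups not handled by Wilcox should leave exactly: $A_n$ for $5\le n\le 13$, $\PSL_2(q)$ for $7\le q\le 53$, and the finite collection of further simple groups (other low-rank Lie type groups over small fields, together with a few sporadic and exceptional groups) to be collected in Table~\ref{table:possible-counterexamples}.

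The main obstacle is bookkeeping rather than new mathematics: one must carefully reconcile the numerical thresholds of Theorem~\ref{thm:d13} with the precise statements of the degree and order classifications, and for each candidate group verify whether Wilcox's construction applies or a prior ad hoc treatment covers it. No further analytic or representation-theoretic input beyond CFSG and the standard classifications of simple groups with bounded representation degree or order is required.
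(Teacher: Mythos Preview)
Your proposal misreads both the statement being proved and the paper's argument. The corollary's list is exactly the set of nonabelian simple groups not excluded by Theorem~\ref{thm:d13}; it is not a list further thinned by Wilcox's construction. The paper's proof simply runs through the families of finite simple groups and, using known values of $d(G)$ from Tiep--Zalesskii, L\"ubeck, and Jansen, checks which groups satisfy \emph{all three} alternatives in Theorem~\ref{thm:d13}. No appeal to Wilcox is made at this stage (Wilcox's reduction appears only in the surrounding discussion, after the corollary).

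Using only part~(ii) as you propose does not reproduce the stated bounds. For example, $\PSL_2(59)$ has $d(G)=29>12$ but $|G|=102660\le 3\times 10^5$, so part~(ii) alone does not exclude it; the bound $q\le 53$ comes from part~(iii), since $d(\PSL_2(59))>20$ and $|G|>10^5$. More seriously, your plan to ``strike off'' groups handled by Wilcox is internally inconsistent with the claimed conclusion: Wilcox's construction covers \emph{all} finite simple groups of Lie type, so invoking it would remove every $\PSL_2(q)$ and every Lie-type entry of Table~\ref{table:possible-counterexamples} from the list, leaving a much shorter list than the corollary asserts. The right approach is to use all three thresholds in Theorem~\ref{thm:d13} together with the tabulated minimal degrees, and nothing more.
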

\begin{proof}
For $G = A_n$ we have $d(G) = n-1$ for $n\geq 6$ and $|G| = n!/2$, so we must have $n \leq 12$. For $G = \PSL_2(q)$ we have $d(G) = q-1$ if $q$ is even, $(q+1)/2$ if $q \equiv 1 \pmod 4$, $(q-1)/2$ if $q \equiv 3 \pmod 4$, and $|G| = (q^3 - q)/(2, q-1)$, so we must have $q \leq 53$.

Let $\tilde d(G)$ denote the minimal degree of a nontrivial complex \emph{projective} representation of $G$ (equivalently, representation of a central extension).
The value of $\tilde d(G)$ is given for all classical groups by Tiep and Zalesskii~\cite{tiep--zalesskii} and for exceptional groups by L\"ubeck~\cite{lubeck}. Minimal degrees for sporadic groups are listed in Jansen~\cite{jansen}. (See also Hiss--Malle~\cite{hiss--malle} for a list of low-dimensional representations.)

Using $d(G) \geq \tilde d(G)$, the value of $\tilde d(G)$ given in the cited literatue can be used to eliminate all simple groups except those in Table~\ref{table:possible-counterexamples} and additionally $\PSU_4(3)$ and $\Omega_8^+(2)$. The exact values of $d(G)$ and $|G|$ for all these groups can be computed in GAP.%
\end{proof}

\begin{table}
\centering
\begin{tabular}{lrrr}
$G$ & $d(G)$ & $|G|$ \\ \hline 
$\PSL_3(3)$ & $12$ & $5616$ \\
$\PSU_3(3)$ & $6$ & $6048$ \\
$M_{11}$ & $10$ & $7920$ \\
$\PSL_3(4)$ & $20$ & $20 160$ \\
$\PSU_4(2)$ & $5$ & $25 920$ \\
$\Sz(8)$ & $14$ & $29 120$ \\
$\PSU_3(4)$ & $12$ & $62 400$ \\
$M_{12}$ & $11$ & $95 040$ \\
$\PSU_3(5)$ & $20$ & $126 000$ \\
$\PSp_6(2)$ & $7$ & $1 451 520$ \\
$\PSU_5(2)$ & $10$ & $13 685 760$
\end{tabular}
\caption{Nonabelian simple groups not ruled out by Theorem~\ref{thm:d13}, apart from $A_n$ ($5 \leq n \leq 12$) and $\PSL_2(q)$ ($7\leq q \leq 53$)}%
\label{table:possible-counterexamples}
\end{table}

By Wilcox~\cite[Theorem~12]{wilcox}, a minimal counterexample $G$ to the Hall--Paige conjecture would have to be simple (Wilcox quotes the Feit--Thompson Theorem to prove this, but it is not necessary: see Section~\ref{avoiding-FT}). Cyclic and alternating groups were known to Hall and Paige to satisfy their conjecture~\cite{hp}. For case-specific constructions for $\PSL_2(q)$ and Mathieu groups, as well as most of the other groups listed in Table~\ref{table:possible-counterexamples}, see Evans's book~\cite{evans-book} (the only exceptions seem to be $\PSL_3(3)$, $\PSU_3(3)$, and $\PSU_3(5)$; see~\cite[Theorem~7.17]{evans-book}). As we have mentioned, Wilcox gave a unified proof for groups of Lie type. The main new contribution of this section therefore is a uniform proof for sporadic groups other than $M_{11}$ and $M_{12}$. In fact we do not need the full strength of the classification of finite simple groups: we need only a classification of the finite simple groups satisfying the conclusion of Theorem~\ref{thm:d13}.

\subsection{Further numerical improvements}

As noted in the introduction, with further computational effort, the authors believe it is possible to extend the range of these arguments to include some, but not all, of the groups in Table~\ref{table:possible-counterexamples}, without introducing any genuinely new ideas.  For example, $M_{12}$ and $\PSU_3(5)$ should definitely be tractable, $M_{11}$ does not appear to be, and the tipping point (in terms of the sizes of $|G|$ and $d(G)$) is somewhere in between.

We have not attempted to put these numerical calculations into the form of a proof.  Instead, for reference, we briefly outline the various tweaks that we believe allow these improvements.  The general rule is that whenever something may be computed efficiently and exactly rather than bounded, do so.
\begin{itemize}
  \item Throughout, we may use an explicit list of dimensions $d_1,\dots,d_k$ of the irreducible representations of $G$, rather than generalities.
  \item In two notable places we make explicit and not necessarily optimal choices of tunable parameters: the values $R$ in~\eqref{eq:cauchy-Mfm0-estimate} and $w$ in~\eqref{eq:orbit-bound}.  In both cases, we are free to search for closer-to-optimal values.
  \item The values $\theta_d(z)$, which we bound in Lemma~\ref{lem:theta-d}, may be computed directly from their definition.  Similarly, the functions $\alpha_m(t)$ considered in Lemma~\ref{lem:stat-phys} may be computed exactly using that their coefficients are associated Stirling numbers of the second kind.  This in turn allows improved estimates of $\beta_m(t)$ in the proof of Lemma~\ref{lem:beta-bound}.
  \item The value $\sum_{m\textup{-sparse}~\rho} \|\hatS(\rho)\|_\HS^2 \dim \rho$, estimated in Lemma~\ref{sparseval}, may be computed exactly using the recurrence in the proof of~\cite[Theorem~5.1]{EMM} (although the need for high or exact numerical precision makes this expensive for large $m$).
\end{itemize}

\subsection{Reducing to simple groups without Feit--Thompson}%
\label{avoiding-FT}

Wilcox quotes the Feit--Thompson theorem as part of his reduction of the Hall--Paige conjecture to the case of simple groups, but it is not needed, as we explain now.

\def\HP{\texttt{HP}}
Let $\HP$ denote the Hall--Paige condition, i.e., $G$ satisfies $\HP$ if and only if its Sylow 2-subgroups are trivial or noncyclic.
The following is a special case of Burnside's transfer theorem.

\begin{lemma}%
    \label{lem:burnside}
Let $G$ be a finite group not satisfying $\HP$.
Then $G$ has a unique subgroup $K$ such that $|K|$ is odd and $[G:K]$ is a power of 2.
\end{lemma}

\begin{proposition}
    Suppose $G$ is not simple and satisfies $\HP$.
    Then $G$ has a nontrivial proper normal subgroup $N$ such that each of $N$ and $G/N$ either satisfies $\HP$ or has order 2.
\end{proposition}
\begin{proof}
    Let $N$ be a nontrivial proper normal subgroup of $G$.
    If $N$ and $G/N$ both satisfy $\HP$ we are done.
    
    Suppose $N$ does not satisfy $\HP$.
    Then by the lemma $N$ has a unique subgroup $K$ such that $|K|$ is odd and $[N:K]$ is a power of 2.
    But then $K$ is normal in $G$ and $G/K$ has the same Sylow 2-subgroups as $G$, so we are done unless $K$ is trivial, i.e., $N$ is a cyclic 2-group.
    By replacing it with its unique order-2 subgroup, we may assume $|N| = 2$, and thus we are done if $G/N$ satisfies $\HP$.
    
    Hence it suffices to consider the case in which $G/N$ does not satisfy $\HP$.
    In this case the lemma shows that $G/N$ and hence $G$ itself has an index-2 subgroup, so we may assume $[G:N]=2$.
    If $N$ does not satisfy $\HP$ then by repeating the argument of the previous paragraph we are either done or $N$ is a 2-group.
    In the latter case $G$ itself is a noncyclic 2-group, and a short argument completes the proof in this case.
\end{proof}

This proposition together with~\cite[Propositions~3, 7, 11]{wilcox} reduces the Hall--Paige conjecture to the simple case without the use of Feit--Thompson.
(However, the fact that every insoluble group satisfies $\HP$ is plainly equivalent to the Feit--Thompson theorem, in view of Lemma~\ref{lem:burnside} and the insoluble groups $G \times C_2$ with $G$ nonabelian simple.)

\section{The asymptotic expansion}%
\label{sec:asymptotic-expansion}

In this final section we derive an algebraic-combinatorial formula for lower-order terms in the asymptotic in Theorem~\ref{main-theorem-hp-version}, or equivalently Theorem~\ref{main-thm} with $f=1$. This formula enables us in principle to compute the number of complete mappings of an arbitrary finite group $G$ of order $n$, asymptotically as $n\to\infty$, up to a multiplicative error of $1 + O(n^{-m})$ for any fixed $m > 0$.

In Section~\ref{sec:major-arcs} we indexed the main contributions (or major arcs) to $1_S * 1_S * 1_S(f)$ by partition systems $\psystem = (\calP_1, \calP_2, \calP_3)$. In this section, in the special case $f=1$, two partition systems contribute the same if they are the same up to permuting the base set $\{1,\dots,n\}$ (as these permutations now do not affect $f$), so we may aggregate the contributions from each $S_n$-orbit of partition systems, and thus express the asymptotic in terms of partition systems up to isomorphism. The orbit size of a partition system $\psystem$ depends on the size of its automorphism group $\Aut \psystem$, and thus its total contribution carries a factor of $1/|\Aut \psystem|$. Additionally, we get a simplified asymptotic by decomposing an arbitrary partition system into its connected components and applying the exponential formula from enumerative combinatorics (see Wilf~\cite[Chapter~3]{gfology} for background).

When the dust settles we will find that the main term $e^{-1/2}$ comes from the single isomorphism class $\psystem$ given by $\calP_1 = \calP_2 = \calP_3 = \{\{1, 2\}\}$ (with in particular the ``$2$'' in ``$e^{-1/2}$'' coming from $|\Aut \psystem| = 2$), and lower-order terms come from connected partition systems of increasing complexity. The lower-order terms can be computed mechanically, though extremely tediously, and expressed in terms of invariants of the underlying group $G$. We do the calculation explicitly for the $1/n$ term, with Theorem~\ref{main-theorem-hp-inv-version} as the result.

To state the formula we first need to further develop the language from Section~\ref{sec:major-arcs}. We recall here the relevant definitions from Section~\ref{sec:major-arcs} and we add several more.

\begin{definition} We recall the convention (see Section~\ref{subsec:arg-proj-application}) that two partitions on different (possibly overlapping) base sets may be regarded as the same if one can be obtained from the other by repeatedly adding or removing singletons.
  \begin{enumerate}[label=(\roman*)]
    \item A \emph{partition system} on a set $X$ is a triple $\psystem = (\calP_1, \calP_2, \calP_3)$ of partitions of $X$ with the same support, denoted $\supp \psystem$.  (By our convention, we lose nothing by assuming $X = \supp \psystem$.)
    \item The \emph{M\"obius function} is defined on a partition system $\psystem = (\calP_1, \calP_2, \calP_3)$ by
    \[
      \mu(\psystem) = \mu(\calP_1)\, \mu(\calP_2)\, \mu(\calP_3).
    \]
    \item The \emph{complexity} of a partition system $\psystem = (\calP_1, \calP_2, \calP_3)$ is
    \[
      \cx \psystem = \max_{\sigma \in S_3} \left( \rank(\calP_{\sigma(1)}) + \rank(\calP_{\sigma(2)} \vee \calP_{\sigma(3)})\right) - |\supp\psystem|.
    \]
    \item The \emph{gamma function} of a partition system $\psystem = (\calP_1, \calP_2, \calP_3)$ is
    \[
      \gamma_G(\psystem) = n^{|X| + \cx \psystem} P_X(c_{\calP_1} * c_{\calP_2} * c_{\calP_3})(1),
    \]
      where $X = \supp \psystem$.  Note this value depends on the group $G$, not just on $n = |G|$.
    \item An \emph{isomorphism} from a partition system $\psystem = (\calP_1, \calP_2, \calP_3)$ to a partition system $\psystem' = (\calP'_1, \calP'_2, \calP'_3)$ is a bijection $f \colon \supp\psystem \to \supp\psystem'$ that sends $\calP_i$ to $\calP'_i$ for each $i$.
    \item The \emph{automorphism group} $\Aut \psystem$ of a partition system $\psystem$ is the subgroup of $\Sym(\supp\psystem)$ consisting of all isomorphisms $\psystem\to\psystem$.
  \end{enumerate}
\end{definition}

The following lemma will be used to show that if we only care about asymptotics up to a given order $n^{-C}$ then we only need to worry about finitely many partition systems.

\begin{lemma}%
  \label{lem:finitely-many-triples}
For any connected partition system $\psystem$ we have
\[
  |\supp \psystem| \leq 4 \cx \psystem + 2.
\]
In particular, there are only finitely many isomorphism classes of connected partition system of any given complexity, and the only connected partition system of complexity zero up to isomorphism is
\[
  \psystem_0 = \left(
    \{\{1, 2\}\},
    \{\{1, 2\}\},
    \{\{1, 2\}\}
  \right).
\]
\end{lemma}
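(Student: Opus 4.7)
The plan is to combine Lemma~\ref{lem:lrank-srank-inequality} (which gives $\trank(\psystem) \ge \lrank(\psystem)$) with the connectedness hypothesis and the fact that every partition appearing in a partition system has full support. Writing $X = \supp \psystem$ throughout, and recalling that $\cx \psystem = \trank(\psystem) - |X|$, it suffices to produce the lower bound $\lrank(\psystem) \ge (5|X|-2)/4$.

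Concretely, I would proceed as follows. First, because each $\calP_i$ is a partition of $X$ with $\supp \calP_i = X$, every cell of $\calP_i$ has size at least $2$, so $|\calP_i| \le |X|/2$ and hence $\rank(\calP_i) \ge |X|/2$ for each $i = 1, 2, 3$. Second, the connectedness assumption is exactly the statement that $\calP_1 \vee \calP_2 \vee \calP_3$ is the indiscrete partition on $X$, which has rank $|X| - 1$ (assuming $X \ne \emptyset$). Plugging these four bounds into
\[
  \lrank(\psystem) = \tfrac12\bigl(\rank(\calP_1) + \rank(\calP_2) + \rank(\calP_3) + \rank(\calP_1 \vee \calP_2 \vee \calP_3)\bigr)
\]
yields $\lrank(\psystem) \ge \tfrac12\bigl(3|X|/2 + |X| - 1\bigr) = (5|X|-2)/4$, and thus
\[
  \cx \psystem \ge \lrank(\psystem) - |X| \ge \tfrac{|X| - 2}{4},
\]
which rearranges to the claimed bound $|X| \le 4 \cx \psystem + 2$.

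For the finiteness statement, once $|X| = |\supp \psystem|$ is bounded in terms of $\cx \psystem$, there are only finitely many triples of partitions of $X$, hence only finitely many isomorphism classes of partition systems. For the classification of the complexity-zero case, the bound gives $|X| \le 2$, while $\supp \psystem \ne \emptyset$ together with the full-support condition forces $|X| \ge 2$; so $|X| = 2$ and the only partition of a two-element set with full support is the indiscrete one, forcing $\calP_1 = \calP_2 = \calP_3 = \{X\}$, which up to relabeling is $\psystem_0$. A brief verification that $\cx \psystem_0 = 0$ (each rank is $1$, each pairwise join has rank $1$, so $\trank \psystem_0 = 2 = |\supp \psystem_0|$) closes the loop.

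There is no genuine obstacle here: the entire proof amounts to chaining together the submodularity-derived inequality $\trank \ge \lrank$ with two elementary observations about ranks of partitions. The only point that warrants a moment of care is the bookkeeping around the full-support convention (in particular, making sure to use connectedness in the form $\rank(\calP_1 \vee \calP_2 \vee \calP_3) = |X| - 1$ rather than $|X|$, which would weaken the bound).
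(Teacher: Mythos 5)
Your proof is correct and matches the paper's argument exactly: both use $\rank(\calP_i) \ge |\supp\psystem|/2$ from the full-support condition, $\rank(\calP_1 \vee \calP_2 \vee \calP_3) = |\supp\psystem| - 1$ from connectedness, and Lemma~\ref{lem:lrank-srank-inequality} to deduce $\cx\psystem \ge |\supp\psystem|/4 - 1/2$. The finiteness and complexity-zero classification you supply are the intended (and straightforward) consequences.
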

\begin{proof}
Let $\psystem$ be a connected partition system of support size $m$. Then $\calP_1$, $\calP_2$, $\calP_3$ all have rank at least $m/2$, and $\calP_1 \vee \calP_2 \vee \calP_3$ has rank $m-1$, so by Lemma~\ref{lem:lrank-srank-inequality},
\[
    \trank(\psystem) \geq \lrank(\psystem) \geq (m/2 + m/2 + m/2 + m-1)/2 = 5m/4 - 1/2;
\]
see Section~\ref{subsec:psystem} for the definitions of these terms.
Hence $\cx \psystem \geq m/4 - 1/2$.
\end{proof}

We are now ready to state the main formula. We use a formal device inspired by ``umbral calculus'' (the unfamiliar reader is advised only to consult sources at least as modern as Roman--Rota~\cite{roman--rota}). Let $u$ and $z$ be formal variables, let $m,C \geq 0$ be cut-off parameters, and let\footnote{``In the nineteenth century---and among combinatorialists well into the twentieth---the linear functional $L$ would be called an umbra, a term coined by Sylvester, that great inventor of unsuccessful terminology.''~\cite{roman--rota}} $L = L_{m,C}$ be the linear map defined on $u$-monomials by
\[
  L\, u^k = \begin{cases}
    n^{2k} / (n)_k^2 &: k \leq m, \\
    0 &: k > m,
  \end{cases}
\]
on $z$-monomials by
\[
  L\, z^k = \begin{cases}
    n^{-k} &: k \leq C, \\
    0 &: k > C,
  \end{cases}
\]
and on a general power series in $u$ and $z$ by
\[
  L \sum_{i,j\geq 0} a_{ij} u^i z^j = \sum_{i,j\geq 0} a_{ij} (L u^i) (L z^j) \qquad (a_{ij} \in \C).
\]
For this to make sense we assume $n \geq m$; thus the image of $L$ is a function of $n$ for integers $n \geq m$. The map $L$ simply allows us to express certain sums more compactly, such as
\begin{equation} \label{Lu-example}
  L \exp(-u^2/2) = \sum_{2k \leq m} (-1)^k \frac{n^{4k}}{2^k k! (n)_{2k}^2},
\end{equation}
or
\begin{equation} \label{Luz-example}
  L \exp(uz) = \sum_{k \leq \min(C, m)} \frac{n^{k}}{k!(n)_k^2}.
\end{equation}

\begin{theorem}%
  \label{thm:asymptotic-expansion}
  Let $\cm(G)$ be the number of complete mappings of a finite group $G$ of order $n$ satisfying the Hall--Paige condition, and let $f_G(u,z)$ be the formal power series
  \[
    f_G(u,z) = \sum_{\psystem~\textup{connected}} \frac{\mu(\psystem)}{|\Aut \psystem|} \gamma_G(\psystem) u^{|\supp \psystem|} z^{\cx \psystem}
  \]
  where the sum extends over all connected partition systems $\psystem$ up to isomorphism.  Then 
  for any fixed integer $C \geq 0$ we have
  \[
    \frac{\cm(G)}{|G^\ab| \, n!^2 / n^n}
    = L_{m,C} \exp \big( f_G(u,z) \big)
    + O(n^{-C-1}),
  \]
  where $L_{m,C}$ is as above and $m = (\log n)^2$.
\end{theorem}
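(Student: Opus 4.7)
The plan is to combine the Fourier decomposition from Section~\ref{sec-outline} with the partition-system formulation of the major arcs from Section~\ref{sec:major-arcs}, and then apply the exponential formula to reorganize the resulting sum into connected contributions.

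First, set $m=(\log n)^2$ and write
\[
  1_S^{*3}(1) = |G^\ab|\,M_{m,1} + \mathrm{minor},
\]
where by Lemma~\ref{lem-onedim-shift} the factor $|G^\ab|$ accounts for one-dimensional background representations, and by Propositions~\ref{low-entropy-minor} and~\ref{prop:dense-minor-arcs} (cf.~\eqref{low-entropy-minor-arcs-estimate}, \eqref{high-entropy-minor-arcs-estimate}) the minor-arc contribution is $O(e^{-c(\log n)^2})(n!/n^n)^3$. Since $\cm(G)=n^{2n}\,1_S^{*3}(1)/n!$, dividing by $|G^\ab|\,n!^2/n^n$ converts this into a super-polynomially small contribution to the left-hand side, and in particular one that is $O(n^{-C-1})$ for any fixed $C$.

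Second, use \eqref{eq:M_m-psystems} to rewrite $M_{m,1}$ as a sum over partition systems on $\{1,\dots,n\}$ with $|\supp\psystem|\le m$. Aggregating by isomorphism class, using that there are exactly $(n)_{|\supp\psystem_0|}/|\Aut\psystem_0|$ partition systems on $\{1,\dots,n\}$ isomorphic to $\psystem_0$, and simplifying, produces
\[
  \frac{n^{3n}}{n!^3}\,M_{m,1} = \sum_{[\psystem_0]\colon|\supp\psystem_0|\le m}\frac{n^{2|\supp\psystem_0|}}{(n)_{|\supp\psystem_0|}^2}\cdot\frac{\mu(\psystem_0)\gamma_G(\psystem_0)}{|\Aut\psystem_0|}\cdot n^{-\cx\psystem_0}.
\]
Each of $\mu$, $\gamma_G$, $|\supp|$, $\cx$ is multiplicative (respectively additive) across the connected-component decomposition of a partition system with disjoint-support components: for $\gamma_G$ this is Lemma~\ref{lem:gamma-mult}, for $\mu$ it follows from the product formula for M\"obius values on partitions, and for $|\supp|$ and $\cx$ it is routine. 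The exponential formula in species-theoretic form (see Wilf~\cite{gfology}) therefore gives the formal-power-series identity
\[
  \sum_{[\psystem_0]\ \textup{all}}\frac{\mu(\psystem_0)\gamma_G(\psystem_0)}{|\Aut\psystem_0|}\,u^{|\supp\psystem_0|}z^{\cx\psystem_0}=\exp(f_G(u,z)).
\]
Applying $L_{m,C}$ to both sides reproduces the right-hand side of the display above, except additionally truncated by $\cx\psystem_0\le C$, so the discrepancy between $L_{m,C}\exp(f_G)$ and $(n^{3n}/n!^3)M_{m,1}$ is exactly the sum over isomorphism classes with $|\supp\psystem_0|\le m$ and $\cx\psystem_0>C$.

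Finally, bound this complexity-truncation error by $O(n^{-C-1})$. For $k\le(\log n)^2$ we have $n^{2k}/(n)_k^2=e^{O(k^2/n)}=O(1)$ and $|\gamma_G(\psystem_0)|\le(1+n^{-1/2})^{2k}=O(1)$ by Proposition~\ref{prop:gamma-prime}. By Lemma~\ref{lem:finitely-many-triples} only finitely many connected partition systems have any given complexity; moreover, any connected component other than the basic pairing $\psystem_0$ has complexity at least $1$ and support at most $4\cx+2$. Hence for each total complexity $c>C$ the contribution factors into a bounded ``core'' sum over systems of complexity $c$ built from components of positive complexity, multiplied by a Poisson-style sum over the number of adjoined copies of $\psystem_0$ that is $O(1)$ uniformly in $m$. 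Summing the resulting geometric tail $\sum_{c>C}O(1)\cdot n^{-c}=O(n^{-C-1})$ completes the proof. The main technical obstacle is precisely this last step: cleanly bounding the contribution at each complexity $c$ uniformly in $m$, which requires careful treatment of the arbitrarily many zero-complexity pairing components that can be adjoined to any fixed positive-complexity core.
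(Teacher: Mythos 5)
Your proposal follows the paper's structure closely for the first three steps: stripping the minor arcs via Propositions~\ref{low-entropy-minor} and~\ref{prop:dense-minor-arcs}, rewriting $M_{m,1}$ via \eqref{eq:M_m-psystems} and aggregating by isomorphism class using the orbit count $(n)_{|\supp\psystem|}/|\Aut\psystem|$, and applying the exponential formula via multiplicativity (Lemma~\ref{lem:gamma-mult}). All of that is correct and matches the paper.

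The genuine gap is in the final step, where you diverge from the paper's route. You propose to bound the complexity-truncation error directly as $\sum_{c>C} O(1)\cdot n^{-c}$, treating this as a geometric tail. But the implicit ``$O(1)$'' factor is \emph{not} uniform in $c$: the core sum at complexity $c$ involves a sum over all multisets of positive-complexity connected systems with total complexity $c$, and both the number of such isomorphism classes and the size of the M\"obius values $|\mu(\psystem)|$ grow super-exponentially in $c$ (roughly like $(c!)^{O(1)}$, since the support can be as large as $4c+2$ and partition counts involve Bell numbers). So what you actually have is $\sum_{c>C} A_c n^{-c}$ with $A_c$ exploding. This series \emph{can} be shown to be dominated by its first term for $m=(\log n)^2$, because the ratio $A_{c+1}/(A_c n)$ tends to $0$ uniformly over $c \le m$, but this requires an explicit bound on $A_c$ that you do not supply, and the phrasing ``geometric tail'' obscures that the heavy lifting has been pushed into an unproved coefficient estimate. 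You flag this as the ``main technical obstacle'' but leave it unresolved.

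The paper sidesteps exactly this difficulty by applying the Cauchy estimate (Lemma~\ref{lem:cauchy-trick}) to the polynomial $M_m(z)$, using the bound $|M_m(z)|=O(1)(n!/n^n)^3$ on the circle $|z|=(0.3/m)^2$ from Proposition~\ref{prop:major-arcs-estimate}. That approach never requires bounding the individual Taylor coefficients $A_c$ at all — the contour estimate handles the whole tail in one stroke — and is the reason Proposition~\ref{prop:major-arcs-estimate} (bounding $M_{m,f}(z)$ on a disk rather than term by term) is set up the way it is. The Cauchy route gives $O((m^2/n)^{C+1}) = O(n^{-C-1+o(1)})$, after which the paper needs one more ``self-improvement'' step (re-running with $C+1$ and showing the $z^{C+1}$ coefficient is $O_C(1)$ under $L$) to land on the sharper $O(n^{-C-1})$; you do not address this step either, although in fairness your direct approach, if the coefficient bounds were supplied, would deliver $O(n^{-C-1})$ outright. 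To make your argument complete, you would need to prove a concrete bound such as $A_c \le (Kc)^{Kc}$, verify it is compatible with the Poisson sum being $O(1)$ uniformly, and then carry out the ratio test against $n^{-c}$ over the range $C < c \le m$.
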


\begin{remark}\leavevmode 
  \begin{enumerate}[label=(\roman*)]
\begin{samepage} 
\item Formally, the sum defining $f_G(u,z)$ is not restricted to partition systems $\psystem$ of bounded complexity or bounded support (we even permit $|\supp \psystem| > n$).  However, for the purposes of computing $L_{m,C} \exp(f_G(u,z))$, we may restrict the sum to isomorphism classes of connected partition systems $\psystem$ with $\cx \psystem \le C$ without changing the answer.  By Lemma~\ref{lem:finitely-many-triples}, the restricted sum is finite.
\end{samepage}
  \item The form of the cut-off $m = (\log n)^2$ is not essential; anything growing faster than $\log n$ but slower than $n^{1/2-\eps}$ would also work, with suitable modifications.
\end{enumerate}
\end{remark}

\begin{proof}
  Note that
  \[
    \frac{\cm(G)}{n!^2/n^n} = \frac{1_S * 1_S * 1_S(1)}{(n!/n^n)^3}.
  \]
  We will estimate $1_S * 1_S * 1_S(1)$ using~\eqref{fourier-expression} as usual. By Propositions~\ref{low-entropy-minor} and~\ref{prop:dense-minor-arcs}, we may ignore the contribution from the minor arcs: that is,
  \[
    \frac{1_S * 1_S * 1_S(1)}{|G^\ab| (n!/n^n)^3} = M_m(1/n) + O(e^{-cm}),
  \]
  where, as in Section~\ref{sec:major-arcs},
  \[
    M_m(z) = \sum_{|\supp \psystem| \leq m} \pfrac{n^{|\supp \psystem|}}{(n)_{|\supp \psystem|}}^3 \mu(\psystem) \gamma_G(\psystem) n^{-|\supp\psystem|} z^{\cx \psystem}.
  \]
  It follows from Lemma~\ref{lem:cauchy-trick} (with $f = M_m$, $u=1/n$, $R = c/m^2$, $k=C$) and Proposition~\ref{prop:major-arcs-estimate} that
  \[
    M_m(1/n) = \sum_{\substack{|\supp \psystem| \leq m \\ \cx \psystem \leq C}} \pfrac{n^{|\supp \psystem|}}{(n)_{|\supp \psystem|}}^3 \mu(\psystem) \gamma_G(\psystem) n^{-|\supp\psystem| - \cx \psystem} + O(m^2 / n)^{C+1}.
  \]
  Hence
  \begin{align*}
    \frac{\cm(G)}{|G^\ab| n!^2/n^n}
    &= \sum_{\substack{|\supp \psystem| \leq m \\ \cx \psystem \leq C}} \pfrac{n^{|\supp \psystem|}}{(n)_{|\supp \psystem|}}^3 \mu(\psystem) \gamma_G(\psystem) n^{-|\supp\psystem| - \cx \psystem} \\
    &\qqquad + O(n^{-C-1+o(1)}).
  \end{align*}
  Isomorphic partition systems contribute the same amount to the sum,\footnote{This is where we need the hypothesis $f=1$: otherwise we would need to consider isomorphism types of pairs $(\psystem, f)$.} and each abstract partition system $\psystem$ appears exactly $(n)_{|\supp\psystem|} / |\Aut \psystem|$ times in the sum, so
  \begin{align*}
    \frac{\cm(G)}{|G^\ab| n!^2/n^n}
    &= \sum_{\substack{|\supp \psystem| \leq m \\ \cx \psystem \leq C \\ \text{(up to isomorphism)}}} \pfrac{n^{|\supp \psystem|}}{(n)_{|\supp \psystem|}}^2 \frac{\mu(\psystem)}{|\Aut \psystem|} \gamma_G(\psystem) n^{-\cx \psystem} \\
    &\qqquad + O(n^{-C-1+o(1)}).
  \end{align*}
  Using $L=L_{m,C}$, $u$, and $z$, this can be written (dropping the ``up to isomorphism'' warning from now on)
  \begin{equation} \label{disconnected-psystem-sum}
    \frac{\cm(G)}{|G^\ab| n!^2/n^n}
    = L \sum_\psystem \frac{\mu(\psystem)}{|\Aut \psystem|} \gamma_G(\psystem) u^{|\supp\psystem|} z^{\cx \psystem}  + O(n^{-C-1+o(1)}).
  \end{equation}

  Our next move is to relate the sum in~\eqref{disconnected-psystem-sum} over all partition systems to a sum just over connected partition systems. To do this, we need to show that each of the factors appearing in~\eqref{disconnected-psystem-sum} is ``multiplicative'' with respect to connected components. Consider an arbitrary partition system $\psystem$. By decomposing $\psystem$ into its connected components we have
  \[
    \psystem = \psystem_1^{e_1} \cup \cdots \cup \psystem_k^{e_k}.
  \]
  Here $\psystem_1, \dots, \psystem_k$ are distinct connected partition systems, and $e_1, \dots, e_k$ are multiplicities, and $(\psystem_1, e_1), \dots, (\psystem_k, e_k)$ are uniquely determined up to order by $\psystem$, and conversely. In particular
  \[
    |\supp \psystem| = e_1 |\supp \psystem_1| + \cdots + e_k |\supp \psystem_k|,
  \]
  and
  \[
    \cx\psystem = e_1 \cx \psystem_1 + \cdots + e_k \cx \psystem_k.
  \]
  It is trivial that\footnote{The multiplicativity of $u^{|\supp\psystem|}$ is the reason for introducing $L$ and $u$: certainly $(n^{|\supp\psystem|}/(n)_{|\supp\psystem|})^2$ is not multiplicative.}
  \begin{align*}
    \mu(\psystem) &= \mu(\psystem_1)^{e_1} \cdots \mu(\psystem_k)^{e_k},\\
    u^{|\supp \psystem|} &= (u^{|\supp \psystem_1|})^{e_1} \cdots (u^{|\supp \psystem_k|})^{e_k},\\
    z^{\cx \psystem} &= (z^{\cx \psystem_1})^{e_1} \cdots (z^{\cx\psystem_k})^{e_k}.
  \end{align*}
  It is not hard to see that
  \[
    \Aut\psystem \cong (\Aut \psystem_1 \wr S_{e_1}) \times \cdots \times (\Aut \psystem_k \wr S_{e_k}),
  \]
  and in particular
  \[
    |\Aut \psystem| = |\Aut \psystem_1| e_1! \cdots |\Aut\psystem_k| e_k!.
  \]
  Finally, the (not quite so obvious) identity
  \begin{equation}\label{fp-mult}
    \gamma_G(\psystem) = \gamma_G(\psystem_1)^{e_1} \cdots \gamma_G(\psystem_k)^{e_k}
  \end{equation}
  follows from repeated application of Lemma~\ref{lem:gamma-mult}.

  Hence, from~\eqref{disconnected-psystem-sum},
  \begin{align*}
    \frac{\cm(G)}{|G^\ab| n!^2/n^n}
    &= L\left[ \sum_{\psystem = \psystem_1^{e_1} \cup \cdots \cup \psystem_k^{e_k}} \prod_{i=1}^k \frac{\mu(\psystem_i)^{e_i}}{e_i! |\Aut \psystem_i|^{e_i}} \gamma_G(\psystem_i)^{e_i} u^{|\supp \psystem_i| e_i} z^{e_i \cx \psystem_i} \right] \\
    &\qqquad + O(n^{-C-1+o(1)}) \\
    &= L \left[ \exp\left(
    \sum_{\psystem ~ \textup{connected}} \frac{\mu(\psystem)}{|\Aut \psystem|} \gamma_G(\psystem) u^{|\supp \psystem|} z^{\cx \psystem}
    \right)\right] \\
    &\qqquad + O(n^{-C-1+o(1)}).
  \end{align*}
  This proves the theorem with an error of the slightly poorer quality $O(n^{-C-1+o(1)})$ in place of $O(n^{-C-1})$.

  Finally, we argue that the error self-improves to the sharper form $O(n^{-C-1})$.
  To see this, we apply the bound above for $C+1$, giving an acceptable error term $O(n^{-C-2+o(1)})$, and show that the contribution from terms $z^{C+1}$ is $O(n^{-C-1})$.  I.e., it suffices to show that
  \[
    L \big( \big[z^{C+1}\big] \exp(f_G(u,z)) \big) = O_C(1),
  \]
  where by $[z^{C+1}] F(z, u)$ we mean the coefficient of $z^{C+1}$ in $F$ as an element of $\C[[u]][[z]]$, which is an element of $\C[[u]]$. Placing absolute value signs everywhere, it suffices to show that
  \[
    L \left( \big[z^{C+1}\big] \exp\left(\sum_{\substack{\psystem~\textup{connected} \\ \cx \psystem \le C+1}} \frac{|\mu(\psystem)|}{|\Aut \psystem|} |\gamma_G(\psystem)|\, u^{|\supp \psystem|} z^{\cx \psystem} \right)\right) = O_C(1)
  \]
  since the left-hand side is an upper bound for the previous quantity.  As all the coefficients of this power series in $u,z$ are now nonnegative, using the bound
  \[
    L (u^k) = n^{2k}/(n)_k^2 \leq e^{O(k^2/n)} = O(1)
  \]
  when $0 \le k \le m$ (and vacuously $L(u^k) = O(1)$ for $k >m$), in turn it suffices to show that
  \[
    \big[z^{C+1}\big] \exp\left(\sum_{\substack{\psystem~\textup{connected} \\ \cx \psystem \le C+1}} \frac{|\mu(\psystem)|}{|\Aut \psystem|} |\gamma_G(\psystem)|\, z^{\cx \psystem} \right) = O_C(1).
  \]
  However, this last fact is clear, as the power series inside the exponential has coefficients $O_C(1)$ (by Lemma~\ref{lem:finitely-many-triples} and Proposition~\ref{prop:gamma-bound}), and this property is preserved after taking the exponential.
\end{proof}

There is one further operation we can apply to partition systems $\psystem = (\calP_1, \calP_2, \calP_3)$ to reduce the number of possibilities we need to consider: we can \emph{reorder} the constituent factors $\calP_1, \calP_2, \calP_3$. Clearly $\mu(\psystem)$, $\Aut \psystem$, $|\supp \psystem|$, and $\cx \psystem$ are invariant under reordering. Less obviously, $\gamma_G(\psystem)$ is also invariant. It suffices to observe that
\[
  c_{\calQ_1} * c_{\calQ_2} * c_{\calQ_3}(1)
\]
is invariant under permutation of indices for any triple of partitons $(\mathcal{Q}_1, \mathcal{Q}_2, \mathcal{Q}_3)$. Up to normalization this quantity is just
\[
  \P(h_1h_2h_3 = 1),
\]
where $h_i$ is a random $\calQ_i$-measurable function. Now note that
\[
  h_1 h_2 h_3 = 1 \iff h_2 h_3 h_1 = 1,
\]
and
\[
  h_1 h_2 h_3 = 1 \iff h_3^{-1} h_2^{-1} h_1^{-1} = 1,
\]
and all permutations of the indices are generated in this way.

\begin{table}
  \centering
  \begin{tabular}{lllll}
$\psystem$ & $\calP_1$ & $\calP_2$ & $\calP_3$ & $\cx\psystem$ \\ \hline
$\psystem_{0}$ & $\{\{1, 2\}\}$ & $\{\{1, 2\}\}$ & $\{\{1, 2\}\}$ & $0$ \\
$\psystem_{1}$ & $\{\{1, 2, 3\}\}$ & $\{\{1, 2, 3\}\}$ & $\{\{1, 2, 3\}\}$ & $1$ \\
$\psystem_{2}$ & $\{\{1, 2\}, \{3, 4\}\}$ & $\{\{1, 3\}, \{2, 4\}\}$ & $\{\{1, 4\}, \{2, 3\}\}$ & $1$ \\
$\psystem_{3}$ & $\{\{1, 2\}, \{3, 4\}\}$ & $\{\{1, 2\}, \{3, 4\}\}$ & $\{\{1, 3\}, \{2, 4\}\}$ & $1$ \\
$\psystem_{4}$ & $\{\{1, 2, 3, 4\}\}$ & $\{\{1, 2\}, \{3, 4\}\}$ & $\{\{1, 2\}, \{3, 4\}\}$ & $1$ \\
$\psystem_{5}$ & $\{\{1, 2, 3, 4\}\}$ & $\{\{1, 2\}, \{3, 4\}\}$ & $\{\{1, 3\}, \{2, 4\}\}$ & $2$ \\
$\psystem_{6}$ & $\{\{1, 2, 3, 4\}\}$ & $\{\{1, 2, 3, 4\}\}$ & $\{\{1, 2\}, \{3, 4\}\}$ & $2$ \\
$\psystem_{7}$ & $\{\{1, 2, 3, 4\}\}$ & $\{\{1, 2, 3, 4\}\}$ & $\{\{1, 2, 3, 4\}\}$ & $2$ \\
\end{tabular}
  \caption{Connected partition systems $\psystem=(\calP_1, \calP_2, \calP_3)$ of support at most $4$, up to isomorphism and reordering}%
  \label{table:psystems}
\end{table}

Table~\ref{table:psystems} lists all connected partition systems of support size $m \leq 4$ up to isomorphism and reordering. These include all partition systems of complexity $\cx\psystem \leq 1$, as verified by a direct computer check of systems of support size $m\leq 6$ (which is enough by Lemma~\ref{lem:finitely-many-triples}). By Theorem~\ref{thm:asymptotic-expansion}, to understand the asymptotic number of complete mappings up to order $n^{-C-1}$, we need only consider the connected partition systems $\psystem$ of complexity $\cx\psystem \leq C$.

\begin{corollary}%
  \label{cor:inv-asymptotic}
As $n\to\infty$ we have
\[
  \frac{\cm(G)}{|G^\ab| \, n!^2/n^n} = e^{-1/2}\left(1 + (1/3 + \inv/4) n^{-1} + O(n^{-2})\right),
\]
where $\inv$ is the proportion of involutions in $G$.
\end{corollary}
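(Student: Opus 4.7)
The plan is to deduce the corollary from Theorem~\ref{thm:asymptotic-expansion} with $C = 1$, which reduces the problem to a finite combinatorial computation.  By Lemma~\ref{lem:finitely-many-triples} the relevant connected partition systems have support of size at most $6$, and Table~\ref{table:psystems} (whose caption asserts it exhausts $\cx\psystem \leq 1$) lists five such systems up to isomorphism and reordering: the unique complexity-$0$ system $\psystem_0$, together with the complexity-$1$ systems $\psystem_1, \psystem_2, \psystem_3, \psystem_4$.  Since $\mu$, $|\Aut|$, $|\supp|$, $\cx$, and $\gamma_G$ are all invariant under reordering of the three constituent partitions, I would first record the number of isomorphism classes in each reordering class, namely $1, 1, 1, 3, 3$ respectively; the factor of $3$ for $\psystem_3$ and $\psystem_4$ reflects the fact that the three ordered triples $(\calP, \calP, \calQ)$, $(\calP, \calQ, \calP)$, $(\calQ, \calP, \calP)$ are pairwise non-isomorphic when two of the factors coincide.

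Next, for each $\psystem_i$ I would evaluate $\mu(\psystem_i)$ from the product formula for the partition-lattice M\"obius function and $|\Aut \psystem_i|$ by inspection, obtaining $(\mu, |\Aut|) = (-1, 2), (8, 6), (1, 4), (1, 4), (-6, 8)$ for $i = 0, 1, 2, 3, 4$.  For $\gamma_G(\psystem_i)$, Proposition~\ref{prop:gamma-prime} reduces the computation to $\gamma_0(\psystem_i, 1)$, which is essentially the (renormalized) number of triples of appropriately measurable functions $h_1, h_2, h_3$ on $\supp\psystem_i$ with $h_1 h_2 h_3 \equiv 1$.  The nonabelian content of the corollary enters only through $\psystem_2$: for the three Klein-orthogonal pairings $\{12|34\}, \{13|24\}, \{14|23\}$ of $\{1,2,3,4\}$, solving the four pointwise equations forces a condition of the form $(xy^{-1})^2 = 1$ on the free parameters, so that $\gamma_G(\psystem_2) = \inv + O(1/n)$; the other three complexity-$1$ systems all give $\gamma_G(\psystem_i) = 1 + O(1/n)$.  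Since $\psystem_0$ appears at $z^0$, one further order is needed there, and a direct calculation gives $\gamma_G(\psystem_0) = 1 - 1/n$.

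Assembling these contributions yields
\[
  f_G(u,z) = -\tfrac{1}{2}\bigl(1 - \tfrac{1}{n}\bigr) u^2 + \Bigl(\tfrac{4}{3} u^3 + \bigl(\tfrac{\inv}{4} - \tfrac{3}{2}\bigr) u^4\Bigr) z + O(z^2)
\]
(modulo terms too small to matter after $L_{m,1}$ is applied).  The final step is to expand $\exp(f_G) = \exp(F_0)\bigl(1 + F_1 z + O(z^2)\bigr)$ and evaluate $L_{m,1}[\exp(f_G)] = L_m[\exp(F_0)] + n^{-1} L_m[\exp(F_0) F_1] + O(n^{-2})$.  Using $L_m u^{2\ell} = n^{4\ell}/(n)_{2\ell}^2 = 1 + 2\ell(2\ell - 1)/n + O(1/n^2)$ together with the identity $\sum_{\ell \geq 0} \tfrac{(-1)^\ell}{2^\ell \ell!}(4\ell^2 - 2\ell) = 0$ (which follows at once from $\sum_\ell z^\ell \ell/\ell! = ze^z$ and $\sum_\ell z^\ell \ell^2/\ell! = (z + z^2) e^z$ evaluated at $z = -\tfrac{1}{2}$), the $1/n$ correction to $L_m[\exp(-u^2/2)]$ vanishes, so the $z^0$ contribution at order $1/n$ comes entirely from the extra factor $\exp(u^2/(2n))$ inside $\exp(F_0)$, producing $e^{-1/2}/(2n)$.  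The leading-order $z^1$ contribution is $\tfrac{e^{-1/2}}{n}\bigl(\tfrac{4}{3} + \tfrac{\inv}{4} - \tfrac{3}{2}\bigr) = \tfrac{e^{-1/2}}{n}\bigl(\tfrac{\inv}{4} - \tfrac{1}{6}\bigr)$, so summing gives the claimed coefficient $\tfrac{1}{2} + \tfrac{\inv}{4} - \tfrac{1}{6} = \tfrac{1}{3} + \tfrac{\inv}{4}$.

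The main obstacle is bookkeeping: correctly counting isomorphism classes inside each reordering class, remembering to carry $\gamma_G(\psystem_0)$ to one more order than the others since it enters at $z^0$, and trusting (or independently checking) the table's assertion that no connected system of complexity $\leq 1$ with support of size $6$ has been overlooked.  The single substantive input is the pointwise calculation identifying $\gamma_G(\psystem_2)$ with $\inv$; everything else is formal generating-function manipulation.
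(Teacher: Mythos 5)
Your proposal is correct and follows essentially the same route as the paper: invoke Theorem~\ref{thm:asymptotic-expansion} with $C=1$, enumerate the five connected partition systems of complexity at most $1$ from Table~\ref{table:psystems}, identify $\gamma_G(\psystem_2)=\inv+O(1/n)$ via the Klein-pairing computation, and push through the $L_{m,1}$ expansion. Your split $\exp(F_0)=\exp(-u^2/2)\exp(u^2/(2n))$ and the observation that $\sum_\ell\frac{(-1/2)^\ell}{\ell!}(4\ell^2-2\ell)=0$ is a cleaner way to organize the $z^0$ bookkeeping than the paper's direct expansion (which folds the $(1-1/n)^k$ and $n^{4k}/(n)_{2k}^2$ corrections together), and your explicit flagging of the need to rule out complexity-$1$ connected systems of support $6$ (which Lemma~\ref{lem:finitely-many-triples} does not immediately exclude) is a legitimate small gap in the paper that you correctly identify as needing a check; but these are stylistic rather than substantive departures.
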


\begin{proof}
  We consider all connected partition systems $\psystem$ listed in Table~\ref{table:psystems} of complexity $\cx\psystem \leq 1$. These are listed again in Table~\ref{table:psystems-cx1} together with the relevant data.
  We now justify the listed estimates of $\gamma_G(\psystem)$.
  For $\psystem_0$, see Lemma~\ref{lem:gamma-pairing}.
 For each of the systems of complexity 1, we check that,
 in the notation of the proof of Proposition~\ref{prop:gamma-bound},
 $t(\psystem, Y) \geq 1$ for each $Y \subsetneq \supp \psystem$; it follows that
\[
  \gamma_G(\psystem) = n^{|\supp\psystem|+1} c_{\calP_1} * c_{\calP_2} * c_{\calP_3}(1) + O(1/n).
\]
Furthermore,
\[
  c_{\calP_1} * c_{\calP_2} * c_{\calP_3}(1) = n^{-\rank(\calP_1) - \rank(\calP_2)} \P(h_1h_2~\text{is}~\calP_3\text{-measurable}),
\]
where $h_i$ is a random $\calP_i$-measurable function (and similarly for other permutations of the indices).

\begin{table}
  \centering
  \begin{tabular}{lllllll}
$\psystem$ & $\cx\psystem$ & $|\supp\psystem|$ & $\mu(\psystem)$ & $|\Aut\psystem|$ & $\#\text{reorderings}$ & $\gamma_G(\psystem)$ \\ \hline
$\psystem_{0}$ & $0$ & $2$ & $-1$ & $2$ & $1$ & $1 - 1/n$ \\
$\psystem_{1}$ & $1$ & $3$ & $8$ & $6$ & $1$ & $1 + O(n^{-1})$ \\
$\psystem_{2}$ & $1$ & $4$ & $1$ & $4$ & $1$ & $\inv + O(n^{-1})$ \\
$\psystem_{3}$ & $1$ & $4$ & $1$ & $4$ & $3$ & $1 + O(n^{-1})$ \\
$\psystem_{4}$ & $1$ & $4$ & $-6$ & $8$ & $3$ & $1 + O(n^{-1})$ \\
\end{tabular}
  \caption{Partition systems $\psystem$ from Table~\ref{table:psystems} with $\cx\psystem \leq 1$: support size, M\"obius value, automorphism group size, number of nonisomorphic reorderings, and gamma function}%
  \label{table:psystems-cx1}
\end{table}

The one interesting case is the ``Klein pairing'' $\psystem_2$, defined by
\begin{align*}
  \calP_1 &= \{\{1, 2\}, \{3, 4\}\}, \\
  \calP_2 &= \{\{1, 3\}, \{2, 4\}\}, \\
  \calP_3 &= \{\{1, 4\}, \{2, 3\}\}.
\end{align*}
In this case if we represent
\begin{align*}
  h_1 &= (x_1, x_1, x_2, x_2), \\
  h_2 &= (y_1, y_2, y_1, y_2),
\end{align*}
then
\[
  h_1 h_2 = (x_1 y_1, x_1 y_2, x_2 y_1, x_2 y_2),
\]
and this is $\calP_3$-measurable if and only if
\begin{align*}
  x_1 y_1 &= x_2 y_2, \\
  x_1 y_2 &= x_2 y_1,
\end{align*}
or equivalently
\begin{align*}
  x_1 &= x_2 z, \\
  y_1 &= z y_2
\end{align*}
for some involution $z$. Thus
\[
  \P(h_1h_2~\text{is}~\calP_3\text{-measurable}) = \inv / n.
\]

Thus the contributions to the sum
\[
  \sum_{\psystem ~ \textup{connected}} \frac{\mu(\psystem)}{|\Aut\psystem|} \gamma_G(\psystem) u^{|\supp\psystem|} z^{\cx \psystem}
\]
are
\begin{align*}
  &\frac{\mu(\psystem_0)}{|\Aut\psystem_0|} \gamma_G(\psystem_0) u^{|\supp\psystem_0|} z^{\cx \psystem_0}
  &&= \frac{-1}{2} (1 - 1/n) u^2,\\
  &\frac{\mu(\psystem_1)}{|\Aut\psystem_1|} \gamma_G(\psystem_1) u^{|\supp\psystem_1|} z^{\cx \psystem_1}
  &&= \frac{8}{6} (1 + O(n^{-1})) u^3 z,\\
  &\frac{\mu(\psystem_2)}{|\Aut\psystem_2|} \gamma_G(\psystem_2) u^{|\supp\psystem_2|} z^{\cx \psystem_2}
  &&= \frac{1}{4} (\inv + O(n^{-1})) u^4 z,\\
  3 \times &\frac{\mu(\psystem_3)}{|\Aut\psystem_3|} \gamma_G(\psystem_3) u^{|\supp\psystem_3|} z^{\cx \psystem_3}
  &&= 3\times \frac{1}{4} (1 + O(n^{-1})) u^4 z,\\
  3\times &\frac{\mu(\psystem_4)}{|\Aut\psystem_4|} \gamma_G(\psystem_4) u^{|\supp\psystem_4|} z^{\cx \psystem_4}
  &&= 3\times \frac{-6}{8} (1 + O(n^{-1})) u^4 z,\\
\end{align*}
so
\begin{multline*}
  \sum_{\psystem~\textup{connected}} \frac{\mu(\psystem)}{|\Aut\psystem|} \gamma_G(\psystem) u^{|\supp\psystem|} z^{\cx \psystem}
  =
  - \frac12(1-1/n) u^2 \\
  + \left(\frac43 u^3 - \frac32 u^4 + \frac14 \inv u^4 + O((u^3+u^4)/n) \right) z
  + O(z^2).
\end{multline*}
Hence by Theorem~\ref{thm:asymptotic-expansion} with $C=1$ we have
\[
  \frac{\cm(G)}{|G^\ab| \, n!^2 / n^n}
  = L\left[ e^{-\frac12 (1-1/n) u^2} \left(1 + \left( \frac43 u^3 - \frac32 u^4 + \frac14 \inv u^4\right) z \right) \right] + O(n^{-2}).
\]
Noting that $n^{2k} / (n)_k^2 = 1 + 2\binom{k}{2} / n + O(k^3/n^2)$ when $k < \sqrt{n} / 10$, it is routine to check that
\begin{align*}
  L \left[e^{-\frac12 (1-1/n) u^2}\right] &= \sum_{k=0}^m \frac{(-1/2)^k}{k!} \Big(1-k/n + 2k (2k-1) /n + O(k^3/n^2) \Big) \\
  &= e^{-1/2} + e^{-1/2} \frac12 n^{-1} + O(n^{-2}),
\end{align*}
and similarly that
\[
  L \left[e^{-\frac12 (1-1/n) u^2} p(u) z \right] = e^{-1/2} p(1) / n + O(n^{-2})
\]
for any fixed polynomial $p$. Hence
\[
  \frac{\cm(G)}{|G^\ab| \, n!^2 / n^n}
  = e^{-1/2} \left(
  1 + \left(\frac13 + \frac14 \inv\right) n^{-1} + O(n^{-2})
  \right),
\]
as claimed.
\end{proof}

\begin{corollary}
If $n = 2^k$ is sufficiently large then $C_2^k$ has more complete mappings than any other group of order $n$.
\end{corollary}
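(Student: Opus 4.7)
The plan is to derive this corollary directly from Theorem~\ref{main-theorem-hp-inv-version}. Among the groups of order $n = 2^k$ (with $k \ge 2$), the only one failing the Hall--Paige condition is the cyclic group $C_{2^k}$ itself, for which $\cm(C_{2^k}) = 0$ and the comparison is trivial. For every other $G$ of order $n$ we have the asymptotic
\[
  \cm(G) = e^{-1/2}\, |G^\ab|\, \frac{n!^2}{n^n}\left(1 + \frac{1/3 + \operatorname{inv}(G)/4}{n} + O(n^{-2})\right),
\]
so the strategy is a case split according to whether $|G^\ab|$ or the $1/n$ correction is what separates $G$ from $C_2^k$. Both ingredients are simultaneously maximised by $C_2^k$, for which $|G^\ab| = n$ and $\operatorname{inv}(C_2^k) = 1$ (every element is an involution or the identity).

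First I would handle the non-abelian case. For any non-abelian $2$-group $G$, the commutator subgroup $G'$ is non-trivial and, by Lagrange, has $2$-power order, so $|G^\ab| = n/|G'| \le n/2$. Even with the trivial bound $\operatorname{inv}(G) \le 1$ the asymptotic then yields $\cm(G) \le \tfrac12 \cm(C_2^k)(1 + O(1/n))$, which is strictly smaller than $\cm(C_2^k)$ for all sufficiently large $n$.

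The only substantive case is therefore $G$ abelian with $G \not\cong C_2^k$. Here $|G^\ab| = n$, so the main terms cancel exactly and we must appeal to the $1/n$ correction---this is precisely the obstacle that the refined asymptotic of Theorem~\ref{main-theorem-hp-inv-version} is designed to overcome, and it is the only reason the problem is not essentially trivial. The structure theorem writes $G \cong \prod_{i=1}^r C_{2^{a_i}}$ with $\sum_i a_i = k$ and some $a_i \ge 2$; since the $2$-torsion $G[2] \cong \prod_i C_2$ has order $2^r$, the inequality $r \le k-1$ gives $\operatorname{inv}(G) = 2^{r-k} \le 1/2$. Substituting into the asymptotic for both groups and subtracting,
\[
  \cm(C_2^k) - \cm(G) = e^{-1/2}\, \frac{n!^2}{n^n}\left(\frac{1 - \operatorname{inv}(G)}{4} + O(n^{-1})\right) \ge e^{-1/2}\, \frac{n!^2}{n^n}\left(\frac{1}{8} + O(n^{-1})\right),
\]
which is strictly positive for all sufficiently large $k$. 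Since there are only finitely many groups of each order, this suffices to conclude.
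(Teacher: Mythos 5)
Your proof is correct and follows essentially the same approach as the paper: apply the refined asymptotic of Theorem~\ref{main-theorem-hp-inv-version}, observe that $C_2^k$ simultaneously maximizes $|G^\ab|$ (equal to $n$) and $\inv$ (equal to $1$), and then note that any other group of order $2^k$ has either $|G^\ab| \le n/2$ or $\inv \le 1/2$. You spell out this dichotomy with a more explicit abelian/non-abelian case split and a brief appeal to the structure theorem, whereas the paper states the dichotomy directly in one line, but the underlying argument is the same.
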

\begin{proof}
If $G = C_2^k$ then $|G^\ab| = n$ and $\inv = 1$, so by the previous corollary the number of complete mappings in $G$ satisfies
\[
  \frac{\cm(G)}{n!^2/n^n} = n e^{-1/2}\left(1 + (1/3 + 1/4) n^{-1} + O(n^{-2})\right).
\]
On the other hand if $|G|=n$ and $G \not\cong C_2^k$ then either $|G^\ab| \leq n/2$ or $\inv \leq 1/2$, so
\[
  \frac{\cm(G)}{n!^2/n^n} \leq n e^{-1/2}\left(1 + (1/3 + 1/8)n^{-1} + O(n^{-2})\right).
\]
Thus if $n$ is sufficiently large we have $\cm(G) < \cm(C_2^k)$.
\end{proof}

More generally, let $n$ be any positive integer, and let $2^k$ be the $2$-part of $n$. By the asymptotic in Corollary~\ref{cor:inv-asymptotic}, if $n$ is sufficiently large then any group $G$ of order $n$ that maximizes $\cm(G)$ must be abelian, and if $k$ is sufficiently large then the Sylow $2$-subgroup of $G$ must be elementary abelian. (Note that, if $G$ is abelian, either $\inv = 2^k/n$ or $\inv \leq 2^{k-1}/n$.) To say more about $G$ we would need to compute more terms in the expansion.

We can also say something about groups $G$, satisfying the Hall--Paige condition, that minimize $\cm(G)$. The abelianization $|G^\ab|$ must be as small possible, so in particular if there is a perfect group of order $n$ then $G$ must be perfect. For example, if $n = p(p-1)(p+1)$ for some prime $p > 3$, then the only perfect group of order $n$ is $\SL_2(p)$ (see~\cite{mo-unique-perfect}), so $G = \SL_2(p)$ is the unique minimizer of $\cm(G)$ among groups of this order if $p$ is sufficiently large. Among groups with $|G^\ab|$ as small as possible, the number of involutions in $G$ must be within $O(1)$ of the minimum.

\bibliography{refs}
\bibliographystyle{alpha}

\end{document}